\documentclass[11pt,letter]{article}

\usepackage[margin=1in]{geometry}

\usepackage{dsfont, amssymb,amsmath,amscd,latexsym, amsthm, amsxtra,amsfonts}

\usepackage[dvipsnames,svgnames]{xcolor}

\usepackage{lineno}

\usepackage[active]{srcltx}

\usepackage{tikz}

\usepackage{natbib}
 \bibpunct[, ]{(}{)}{,}{a}{}{,}%

\usepackage{bbm}
\usepackage{enumerate}
\usepackage{mathrsfs}

\usepackage{comment}         
\usepackage{cases}
\usepackage{circuitikz}

\usepackage{subcaption} 

\usepackage{verbatim}
\usepackage{graphicx}
\usepackage{epstopdf}
\usepackage{bm}
\usetikzlibrary{automata,arrows,positioning,calc}
\usepackage[colorlinks=true,citecolor=db,linkcolor=db,urlcolor=blue,pdfstartview=FitH]{hyperref}
\numberwithin{equation}{section}
\usepackage{graphics}
\usepackage{booktabs}
\usepackage{algorithm,algorithmic}

\usepackage{mathtools}
\mathtoolsset{showonlyrefs=true}

\usepackage{adjustbox}                  

\usetikzlibrary{arrows.meta, positioning, calc, shadows.blur}

\definecolor{brandBlue}{HTML}{0072B2}
\definecolor{brandOrange}{HTML}{E69F00}
\definecolor{brandGreen}{HTML}{009E73}
\definecolor{brandPurple}{HTML}{6F42C1}
\definecolor{db}{RGB}{0, 0, 130}
\definecolor{rp}{rgb}{0.25, 0, 0.75}
\definecolor{dg}{rgb}{0, 0.6, 0}

\allowdisplaybreaks

\newtheorem{theorem}{Theorem}[section]
\newtheorem*{theorem*}{Theorem}
\newtheorem{corollary}[theorem]{Corollary}
\newtheorem{proposition}[theorem]{Proposition}
\newtheorem{lemma}[theorem]{Lemma}

\theoremstyle{definition}
\newtheorem{definition}[theorem]{Definition}

\theoremstyle{definition}

\theoremstyle{definition}
\newtheorem{remark}{Remark}
\theoremstyle{definition}
\newtheorem{assumption}{Assumption}

\usepackage{amsmath}
\usepackage{stmaryrd}

\newcommand{\mR}{\mathbb{R}}

\newcommand{\mN}{\mathbb{N}}

\newcommand{\mE}{\mathbb{E}}
\newcommand{\mF}{\mathbb{F}}
\newcommand{\mP}{\mathbb{P}}

\newcommand{\mL}{\mathbb{L}}
\newcommand{\mB}{\mathbb{B}}

\renewcommand{\epsilon}{\varepsilon}

\newcommand{\M}{\mathcal{M}}

\newcommand{\F}{\mathcal{F}}

\newcommand{\Ti}{\mathcal{T}}

\newcommand{\B}{\mathcal{B}}

\newcommand{\cR}{\mathcal{R}}
\newcommand{\cX}{\mathcal{X}}

\newcommand{\cA}{\mathcal{A}}
\newcommand{\cW}{\mathcal{W}}
\newcommand{\cL}{\mathcal{L}}

\newcommand{\cP}{\mathcal{P}}

\newcommand{\barX}{\bar{X}}

\newcommand{\bP}{\mathbf{P}}
\newcommand{\bJ}{\mathbf{J}}

\newcommand{\bB}{\mathbf{B}}
\newcommand{\fB}{\mathfrak{B}}

\newcommand{\bmu}{\bm{\mu}}
\newcommand{\bnu}{\bm{\nu}}

\newcommand{\sC}{\mathscr{C}}

\newcommand{\sD}{\mathscr{D}}

\newcommand{\bD}{\mathbf{D}}

\newcommand{\mt}{\mathbf{t}}

\newcommand{\Tr}{\mathrm{Tr}}
\newcommand{\mc}{\mathrm{c}}

\newcommand{\cQ}{\mathcal{Q}}
\newcommand{\dtriangle}{\delta \triangle}

\newcommand{\llb}{\llbracket}
\newcommand{\rrb}{\rrbracket}
\newcommand{\bw}{\mathbf{w}}
\newcommand{\mb}{\mathrm{b}}

\newcommand{\fC}{\mathfrak{C}}

\title{Mean-field games with rough common noise:\\ the compactification approach}

\author{
Erhan Bayraktar \thanks{Department of Mathematics, University of Michigan, Ann Arbor, MI, USA. 
		\url{erhan@umich.edu}. Supported in part by the National Science Foundation under grant DMS-2507940 and by the
Susan M. Smith Chair.}
\and Xihao He \thanks{Department of Mathematics, University of Michigan, Ann Arbor, MI, USA. 
		\url{hexihao@umich.edu}}
\and Xiang Yu \thanks{Department of Applied Mathematics, the Hong Kong Polytechnic University, Kowloon, Hong Kong. 
		\url{xiang.yu@polyu.edu.hk}.  Supported by the Hong Kong RGC General Research Fund (GRF) under grant no.15214125.  }
\and Fengyi Yuan \thanks{School of Science and Engineering, The Chinese University of Hong Kong (Shenzhen),
Shenzhen, Guangdong, China. 
		\url{yuanfengyi@cuhk.edu.cn}. Supported by the Chinese University of Hong Kong (Shenzhen) start-up fund under UDF01004253.}
}

\date{}

\begin{document}
\maketitle 

\begin{abstract}
     We study mean-field game (MFG) problems with rough common noise, in which the representative state dynamics are governed by a controlled rough stochastic differential equation driven by an idiosyncratic Brownian motion and a deterministic rough-path signal that affects the whole population. Within this new framework, we introduce a canonical weak formulation based on relaxed controls and rough martingale problems. We prove the existence of a pathwise mean-field equilibrium by developing new compactification tools that accommodate rough integration and differ substantially from classical compactification arguments in the literature. Finally, we discuss the relationship between the pathwise problem and the classical MFG problem with randomized Brownian common noise. { Using the notion of a pathwise admissible set, we recast mean-field game problems with common noise as optimization problems over an extended space of probability measures. We establish an equivalent characterization of Carmona-Delarue-Lacker's weak equilibrium and, as an application, give an alternative proof of strong equilibrium without first establishing pathwise uniqueness}.

		\ \\ 
		\noindent\textbf{Keywords}: Mean field games, rough paths, common noise, relaxed controls, rough martingale problems, compactification arguments

\end{abstract}
\bigskip
 
\section{Introduction}

The primary goal of this paper is to study a mean-field game (MFG) problem with rough common noise, which is formulated as follows:
a representative agent solves an optimal control problem under the given population aggregation $\bmu :=\{\mu_t\}_{t \in [0,T]}$, where the state process $X^{\bmu,\alpha}$ solves a controlled rough stochastic differential equation (RSDE):
\begin{align}\label{state-dynamic-1}
    dX^{\bmu,\alpha}_t  = b(t,X^{\bmu,\alpha}_t,\mu_t,\alpha_t) dt
                            ~ + ~
                            \sigma(t,X^{\bmu,\alpha}_t,\mu_t)dW_t
                            ~ + ~
                            \sigma^0(t,X^{\bmu,\alpha}_t,\mu_t)d\bB_t.
\end{align}
Here $\alpha := \{\alpha_t\}_{t\in [0,T]}$ is an admissible control, and the coefficient fields $b,\sigma,\sigma^0$ are given and satisfy technical assumptions stated below. The objective functional of the representative agent is
\begin{align*}
    J(\alpha;\bmu) := \mE\bigg[\int_0^T f(t,X^{\bmu,\alpha}_t,\mu_t,\alpha_t)dt
    + g(X^{\bmu,\alpha}_T,\mu_T)\bigg],
\end{align*}
where the running cost function $f$ and the terminal cost function $g$ are given. As in classical MFG, the mathematical problem is to find a Nash equilibrium $(\alpha_*,\bmu_*)$, where $\alpha_*$ is the best response for the representative agent and $\bmu$ precisely characterizes the environment when all agents use the strategy $\alpha_*$. The main novelty of this paper is the introduction of two distinct processes to model two types of noise. On the one hand, each agent in the population has its own idiosyncratic noise path, which is modeled probabilistically by the Brownian motion $W$. On the other hand, the common noise is shared by the entire population, and we use a {\it single} rough path $\bB$ to represent such a common shock, without imposing a probabilistic model.

Dating back to \cite{McKean1966NonlinearParabolic}, mean field approximation originated in statistical physics, where many-body interactions were replaced by their average effects to make models tractable. When this idea was transferred from passive particles to strategic agents, mean field games (MFGs) were introduced and studied by \cite{LasryLions2007} and \cite{HuangCainesMalhame2006}. They provide a tractable framework for analyzing strategic interactions in large populations of nearly symmetric agents. Beyond their intrinsic mathematical interest, MFGs have become standard models in economics, finance, energy systems, and engineering, where the number of interacting decision makers is large and the aggregate effect of the crowd is essential.

Practical applications with exogenous shocks affecting the entire system have motivated the introduction of common noise in mean field problems. Depending on the context, common noise may represent macroeconomic changes, sudden demand shifts, policy or regulatory actions, or environmental events. We
refer to \cite{carmona2018probabilistic2} for a comprehensive introduction to recent developments on this topic. Despite its importance, common noise creates structural differences from classical settings. The main difficulty lies in replacing a deterministic measure flow $\mu_t=\cL(X_t)$ by a flow of {\it conditional} laws $\mu_t=\cL(X_t|\F^B_t)$ when the common noise is modeled by a Brownian motion $B$. In a specific linear model, \cite{Ahuja2016WellposedMFGCommonNoise} establishes the existence of mean-field equilibrium under a monotonicity condition. In the analytical approach, which originates in \cite{LasryLions2007}, mean-field games are studied either through a coupled system of an HJB equation and a Fokker-Planck equation or through an infinite-dimensional master equation. In the case of common noise, these analytical objects become {\it stochastic} partial differential equations, requiring much more involved technical analysis than in models without common noise. \cite{CardaliaguetDelarueLasryLions2019MasterEquation} establish the well-posedness of the stochastic master equation, and \cite{CardaliaguetSouganidis2022FirstOrderMFGCommonNoise} provides similar results when the idiosyncratic noise is degenerate. The analytical approach usually relies on structural assumptions, such as monotonicity conditions, convexity of Hamiltonians, or constant volatility coefficients. A pioneering work in a more general setting is \cite{Lacker-common-noise}, where the authors use a probabilistic compactification approach to study a weakened concept of mean-field equilibrium allowing $\mu_t=\cL(X_t|\F^{B,\mu}_t)$. This weaker definition of equilibrium seems indispensable for resolving the topological issues of conditional laws; namely, the best-response map composed with the conditional-law map is not continuous under the naive topology of almost sure weak convergence. Later, this concept of ``weak equilibrium" also motivated a similar notion of weak solution to McKean-Vlasov dynamics with common noise in \cite{HammersleySiskaSzpruch2021WeakMKVCommonNoise}.

A well-known drawback of weak equilibrium is its potential dependence on additional information beyond the common noise. In reality, $\bmu=\{\mu_t\}_{t\in [0,T]}$ represents the statistical distribution of all agents and should therefore be completely determined once the common noise, the sole source of uncertainty at the systematic level, is realized. This motivates us to adopt a pathwise perspective on common noise: instead of conditioning on a random input, we fix it as a deterministic path within the state dynamics; see the $d \bB$-term in \eqref{state-dynamic-1}. The pathwise perspective on state equations and control problems has been proposed and investigated in several models, yielding equivalent formulations by freezing the external randomness in the original problem; see, for instance, \cite{pathwisecontrol}, \cite{BailleulCatellierDelarue_MFRDE}, \cite{CoghiDeuschelFrizMaurelli2020}, \cite{controleedRSDE}, \cite{BoWangWeiYu_PathwiseCompact} among others. In our pathwise formulation of MFGs, the randomness comes from discrepancies among the idiosyncratic noises of different agents and potentially from external randomization of control (i.e., the relaxed control formulation; see Definition \ref{def:pathwise-relax-control}), but not from the common noise. Consequently, the measure flow $\bmu$ becomes deterministic, eliminating the technical issues posed by conditional laws; see the analogous pathwise treatment of Poissonian common noise in mean-field control problems in the recent study \cite{BoWangWeiYu_PathwiseCompact}.
On the other hand, unlike the Poisson case in \cite{BoWangWeiYu_PathwiseCompact},
to capture the low time regularity of the continuous noisy signal $\bB$, rough-path noise is a natural and suitable choice, which is the main motivation for our study of MFG with rough common noise.

Rough path theory, initiated by Lyons and further developed by numerous authors, offers a robust pathwise interpretation for differential equations driven by irregular signals, without requiring statistical assumptions on those signals. It is well known that defining integration becomes challenging when the integrator \( B \) has very low regularity (for example, when it represents a single realized path of a Brownian motion). In contrast to It\^o's probabilistic approach, the core idea of rough path theory is to enhance the driving path \( B \) with a higher-level path \( \mB \) and to consider a convergent extended Riemann sum against \( \bB:=(B,\mB) \). For references on rough path theory, we refer to \cite{Lyons1998,LyonsQian2002,Gubinelli2004,FrizVictoir2010,rough_path_book}. Although the original rough path theory has been well developed and widely applied in recent decades, it is not sufficient for our setting. Indeed, in \eqref{state-dynamic-1}, the Brownian motion term \( d W_t \) makes the solution \( X \), and consequently the integrand \( \sigma^0(t,X_t,\mu_t) \) in the \( d \bB_t \)-term, a stochastic process. This situation is not covered by standard rough path theory and motivates the use of a recent breakthrough in this direction: the theory of {\it rough stochastic differential equations} (RSDE) in \cite{RSDE}. Following this pioneering paper, several studies have extended or applied this framework to various problems. For instance, \cite{roughPDE-dual} explores parametrized RSDEs, \cite{controleedRSDE} investigates pathwise stochastic control problems, \cite{mkv_rough_common_noise} extends RSDEs to the McKean-Vlasov case (with \cite{RFPK} investigating the corresponding Fokker-Planck equations), and \cite{randomisation_RSDE} focuses on their randomization to classical SDEs with two Brownian motions. Near the completion of this manuscript, we came across a preprint \cite{friz2026meanfieldgamesroughcommon} that explores linear-quadratic mean-field game problems with rough common noise and uses new tools for rough forward-backward SDEs.


The aforementioned series of studies by Friz and coauthors plays a crucial role in this work. However, our methodology differs substantially: we develop new techniques for applying probabilistic compactification arguments. Specifically, while \cite{RSDE} and many subsequent studies primarily focus on strong formulations on a fixed probability space, we work on the canonical space and use a relaxed formulation in the weak sense. The main reason for this shift is that it facilitates compactness. Although the compactification approach has been well developed without common noise and with Brownian common noise (\cite{Lacker2015} and \cite{Lacker-common-noise}), it has not been explored in the context of RSDEs. The present paper fills this gap and advances the compactification approach by addressing technical challenges induced by rough paths.

First, when defining the admissible relaxed control (or equivalently, the state dynamics), we cannot rely directly on the original RSDE in \cite{RSDE} or its extension to the McKean-Vlasov case in \cite{mkv_rough_common_noise}, because of the $\mu$-dependence in $\sigma^0$. Unlike the McKean-Vlasov equation, our state dynamics are those of a standard RSDE in \cite{RSDE}, with coefficients depending on $\bmu$. Therefore, the first step is to define the appropriate domain of $\bmu$ (Definition \ref{def:domain-mu}) and identify the correct controlled vector field to ensure the well-posedness of the state dynamics (Lemma \ref{lemma:construct_cvf}). This also introduces an additional condition in the definition of admissible relaxed controls (Condition 2, Definition \ref{def:pathwise-relax-control}).

Second, we propose and study {\it rough martingale problems} associated with relaxed controls (as outlined in Appendix \ref{app:rough-martingale}), which are new to the literature. We establish their complete equivalence with weak solutions to RSDEs. This is a noteworthy mathematical result in its own right, and it is crucial for obtaining our main result: the existence of mean-field equilibrium. The direction from RSDEs to rough martingale problems is a pivotal step in showing that the admissible set is non-empty. Typically, this step is straightforward in classical models without rough-path noise. However, the presence of rough paths creates substantial differences in the proofs. In principle, the rough path norms needed to verify the definition of relaxed controls (Definition \ref{def:pathwise-relax-control}) are {\it not} law-invariant because they depend on the filtration specified in the strong formulation. By carefully tracking the {\it natural} filtration, we still establish certain invariance properties (Lemma \ref{lemma:isometry}), which ultimately lead to the finite rough path norm condition in Definition \ref{def:pathwise-relax-control}. The converse direction, from rough martingale problems to RSDEs, is essential for ensuring tightness of the relaxed control set, in conjunction with a priori estimates of RSDEs (Lemma \ref{lemma:tight:1}).

Finally, and perhaps most importantly, the proof of the fixed point result for the consistency condition differs substantially from the classical arguments used in the absence of rough paths. The most challenging aspect is proving the invariance of the fixed point map (Subsection \ref{subsec:step-2}). To address this, we propose a domain $\cP_{M,\epsilon}$ that explicitly uses properties of rough path norms. In particular, we leverage the compact embedding from $\beta$-H\"older norms to $\alpha$-H\"older norms when $\beta < \alpha$, as detailed in Theorem \ref{thm:pathwiseMFE}. Additionally, due to the inherent nature of rough path norms, the lower hemicontinuity of the fixed-point map cannot be obtained through a straightforward Gronwall-type estimate, as demonstrated in Remark \ref{rmk:hemicts}. Instead, we resort to weak uniqueness of relaxed controls when the control-noise joint law is fixed (Proposition \ref{lemma:R:uniqueness} and Lemma \ref{lemma:Phi:continuity}). This reliance necessitates the explicit encoding of idiosyncratic noise $W$ within Definition \ref{def:pathwise-relax-control}.

In sum, one of the main results of this paper is Theorem \ref{thm:pathwiseMFE}, which establishes the existence of (pathwise) mean-field equilibrium. This theorem builds upon the extensive technical results as elaborated above.

{
As another contribution to the study of MFGs with common noise, in Section \ref{sec:randomization} we establish connections between the proposed pathwise formulation and the conventional problem with Brownian common noise, which we call the randomized problem. We prove that, given any random input $(\hat\bB,\hat\mu)$, admissible controls for randomized problems are exactly those probability measures whose conditional laws are admissible in the pathwise problems, for almost every realization of $(\hat\bB,\hat\mu)$; see Lemma \ref{lemma:randomization}. This relation enables a new equivalent characterization of weak equilibrium in \cite{Lacker-common-noise}; see Proposition \ref{prop:randomized-weak}. As an application, we prove that every weak equilibrium is strong under additional structural assumptions (Assumption \ref{ass:strong}). Compared with \cite{Lacker-common-noise}, the most important new feature of our result is that it relies neither on pathwise uniqueness of weak equilibrium nor on uniqueness of the best-response map. We also note that the results in Section \ref{sec:randomization} support applications with common noise potentially beyond the Brownian type, or even semimartingales.
}

To summarize, the main contributions of this paper are as follows:
\begin{enumerate}
\item We introduce a new framework for mean-field games with rough-path common noise and develop a compactification approach using relaxed controls on the canonical space. This study motivates numerous future research projects within the same framework, such as limit theory from $n$-player games to the mean-field problem. Furthermore, this framework naturally connects to classical MFG with Brownian common noise.

\item Under mild model assumptions, the existence of mean-field equilibrium for MFG with rough common noise is established.

\item We develop new technical tools and results in the rough path context, including the equivalence between RSDEs and rough martingale problems, as well as a weak uniqueness result for relaxed control problems.

\item {We discuss the relationship between rough-path common noise and conventional Brownian common noise and establish an alternative characterization of weak equilibrium based on the pathwise formulation proposed in this paper. }
\end{enumerate}

\vspace{0.5em}

The rest of the paper is organized as follows. Section \ref{sec:pathwiseproblem} formulates the problem and studies basic properties of the admissible set of relaxed controls. Section \ref{sec:pathwiseMFE} proves the existence of a mean field equilibrium using the Kakutani fixed-point theorem. Section \ref{sec:randomization} establishes connections between pathwise problems with rough-path common noise and standard mean field game problems with Brownian common noise. Finally, Appendix \ref{app:causal-coupling} presents auxiliary results on causal coupling, and Appendix \ref{app:rough-martingale} collects useful properties of rough martingale problems used in the previous sections.

\vspace{0.5em}

\noindent\textbf{Notations}. We list below some notations that will be used frequently throughout the paper:
\begin{itemize}
\item $\cX:=C([0,T];\mR^d)$, the space of state paths. Equip $\cX$ with $d_\infty$ metric and Borel filtration $\mF^X=\{\F^X_t\}_{t\in [0,T]}$.
\item {$\cW = C([0,T];\mR^n)$, the space that supports the idiosyncratic noise. Equip $\cW$ with $d_\infty$ metric and Borel filtration $\mF^W=\{\F^W_t\}_{t\in [0,T]}$.}
\item With a compact action space $U$, $\cQ$ is the set of bounded measures on $[0,T]\times U$, with total mass being $T$ and the first marginal being $dt$. We usually identify $q\in \cQ$ with a path $t\mapsto q_t\in \cP(U)$, given by disintegration $q(dt,du)=q_t(du) dt$. Equip $\cQ$ with corresponding Borel filtration $\mF^\Lambda=\{\F^\Lambda_t\}_{t\in [0,T]}$.
\item { $\Omega^0 = C([0,T];\mR^l)$, the space that supports the common noise. Equip $\Omega^0$ with $d_\infty$ metric and Borel filtration $\mF^{B^0}$. $\mP^0$ is the Wiener measure on $\Omega^0$.}

\item $\Omega:=\cX\times\cQ\times \cW$, and $\bar \Omega: = \Omega\times \Omega^0$. Equip both with corresponding product filtration $\mF=\{\F_t\}_{t\in [0,T]}$ and $\bar \mF=\{\bar \F_t\}_{t\in [0,T]}$.
\item The coordinate map on $\Omega$: $(X,\Lambda,W)$; the coordinate map on $\bar \Omega$: $(\bar X,\bar \Lambda,\bar W,\bar B^0)$.
\item $\triangle=\{(s,t):0\leq s< t\leq T\}$, $\dtriangle=\{(s,u,t):0\leq s<u<t\leq T\}$.
\item For a Euclidean space $E$, a probability measure $\mP\in \cP(\Omega)$ and parameters $1\leq m\leq n\leq \infty$, $\kappa\in (0,1]$, the space $C_2^{\kappa} L_{\mP}^{m,n}(E)$ consists of two-parameter processes $A:\triangle\times \Omega \to E$ such that:
\begin{itemize}
\item $A$ is jointly Borel measurable, and is $\triangle \to L_{\mP}^{m}$ continuous;
\item $\displaystyle
 	\| A\|_{\kappa;m,n}:=\sup_{(s,t)\in \triangle} \frac{\|\|A_{s,t}|\F_s\|_m\|_n}{|t-s|^\kappa}<\infty $.
We usually omit the value space $E$ and write $A\in C^\kappa_2L^{m,n}_{\mP}$, although for different $A$ they may take values in different $E$.
\item The space $C^\kappa L_\mP^{m,n}(E)$ contains all stochastic processes $Y:[0,T]\times \Omega\to E$ such that $t\mapsto Y_t$ is continuous from $[0,T]$ to $L^m_{\mP}$ and $\delta Y\in C^\kappa_2 L^{m,n}_{\mP}$.

\end{itemize}
\item Let $V$ and $W$ be finite-dimensional normed vector spaces, and let\footnote{In this paper, we only use $k=0,1$.} $\gamma=k+\alpha$ with $k\in \mN$, $\alpha\in(0,1]$. The H\"older space of order $\gamma$ is
\[
C^{\gamma}(V;W)
:= \left\{ f\in C(V;W)\;:\; |f|_\gamma<\infty \right\},
\]
where the H\"older norm and semi-norm are respectively given by
\begin{align}
&|f|_\gamma
:= \sum_{i=0}^{k} \sup_{x\in V}|D^{(i)}f(x)|
 + [f]_\gamma,\\
&[f]_\gamma
:= \sup_{\substack{x,y\in V\\ x\neq y}}
\frac{|D^{(k)}f(x)-D^{(k)} f(y)|}{|x-y|^{\alpha}},
\end{align}
where $D^{(k)}f(x)$ is the $k$-th order Fr\'echet derivative and
$|\cdot|$ represents different canonical norms on different finite-dimensional spaces.
\end{itemize}


\section{{MFG with Rough Common Noise}}\label{sec:pathwiseproblem}

\subsection{Problem formulation and assumptions}

Given a (random) measure flow $\bmu = \{\mu_t\}_{t\in [0,T]}$, we consider the following controlled state process $X^{\bmu,\alpha}$ for the representative agent, governed by the RSDE
\begin{align}
    dX^{\bmu, \alpha}_t = &b(t,X^{\bmu, \alpha}_t,\mu_t,\alpha_t)dt+{ \sigma(t,X^{\bmu,\alpha}_t,\mu_t)}dW_t
    +\sigma^0(t,X^{\bmu, \alpha}_t,\mu_t)d \bB_t,
\end{align}
where $\alpha$ is a progressively measurable control process valued in the compact set $U$, $W$ represents the idiosyncratic noise for each individual agent, and $\bB$ plays the role of common noise affecting the entire population. For technical convenience, we focus on the model in which only the drift term is controlled.

Let us consider the cost functional of the representative agent given by
\begin{align}\label{orig-obj}
J(\alpha;\bmu) = \mE\bigg[ \int_0^T f(t,X^{\bmu, \alpha}_t,\mu_t,\alpha_t)dt + g(X^{\bmu,\alpha}_T,\mu_T)\bigg].
\end{align}

The following standard assumption is imposed throughout the paper.
\begin{assumption}\label{ass:}
The functions $(b,f) : [0,T] \times \mR^d \times \cP_2(\mR^d) \times U \longrightarrow \mR^d  \times \mR$, $(\sigma,\sigma_0) : [0,T] \times \mR^d \times \cP_2(\mR^d)  \longrightarrow \mR^{d\times l} \times \mR^{d \times k}$, $g:\mR^d \times \cP_2(\mR^d)  \longrightarrow \mR$ are bounded and satisfy the following properties:
\begin{enumerate}
\item For fixed $t \in [0,T]$, $f(t,\cdot)$, $g$ are continuous.

\item The functions $b,\sigma$ are Lipschitz uniformly in time and control in the sense that, there exists a constant $L$, such that for any $x_1,x_2 \in \mR^d$, $\mu_1,\mu_2 \in \cP_2(\mR^d)$,
\begin{align*}
    |b(t,x_1,\mu_1,a) - b(t,x_2,\mu_2,a)|
    ~ &+ ~
    |\sigma(t,x_1,\mu_1) - \sigma(t,x_2,\mu_2)|
    \le L\big(|x_1 - x_2| + \cW_2(\mu_1,\mu_2)\big).
\end{align*}
\item The function $\sigma^0$ is continuously differentiable with respect to $\mu$, with bounded derivatives. Moreover, there exists a constant $L$, such that for any $0\leq s<t\leq T$, $x,y\in\mR^d$, $\mu_1,\mu_2\in \cP_2(\mR^d)$:
\begin{align}
  |\sigma^0(s,x,\mu_1)-&\sigma^0(t,x,\mu_2)| + |\nabla \sigma^0(s, x,\mu_1)-\nabla \sigma^0(t,x,\mu_2)|\\
  +&|\partial_\mu\sigma^0(s,x,\mu_1)(y)-\partial_\mu\sigma^0(t,x,\mu_2)(y)|
  \leq  L\big(|t-s|+\cW_2(\mu_1,\mu_2)\big).
\end{align}
\item There exists a $\gamma\in (1,2]$, such that for each $t\in[0,T]$ and $\mu\in \cP_2(\mR^d)$, $\sigma^0(t,\cdot,\mu)\in C^\gamma(\mR^d;\mR^{d\times k})$, $\partial_\mu\sigma^0(t,\cdot,\mu)\in C^{\gamma-1}(\mR^d;\mR^{d\times k\times k})$, and
\begin{align}
    \sup_{t\in [0,T],\mu\in \cP_2(\mR^d)}\big\{|\sigma^0(t,\cdot,\mu)|_{\gamma}+|\partial_\mu\sigma^0(t,\cdot,\mu)|_{\gamma-1}\big\}<\infty.
\end{align}
\end{enumerate}
\end{assumption}

\subsection{Definitions and preliminaries}

Throughout this paper, the following set of indices will be used frequently\footnote{Recall that $\gamma$ is the spatial regularity of $\sigma^0$, and $\alpha$ is the path-regularity of common noise, so typically $\gamma\in (1,2]$, and $\alpha \in (1/3,1/2)$.}
\begin{align}
\Pi =\bigg\{(\beta,\beta'):\beta,\beta'\in \bigg(\frac{1}{1+\gamma},\alpha\bigg], \beta'\leq (\gamma-1)\beta\bigg\}.
\end{align}

For convenience, we consider the Polish space of rough paths $\hat\sC^{0,\alpha}$, defined as the $\rho_\alpha$-closure of smoothed rough paths (see Subsection 3.1 of \cite{randomisation_RSDE}) with the special bracket $[\bB]_{s,t}=(t-s)I_{k\times k}$, where
\begin{align}
    \rho_{\alpha}(\bB^1,\bB^2)= |\delta B^1- \delta B^2|_\alpha + |\mB^1-\mB^2|_{2\alpha}, \quad \bB^1=(B^1,\mB^1), \bB^2 = (B^2,\mB^2).
\end{align}
The It\^o-enhanced Brownian motion, as the most important example of common noise in the literature on MFG problems, has this special bracket. We emphasize that including a general bracket is only a notational extension that replaces the Lebesgue integral with Young's integral.

The space $\hat\sC^{0,\alpha}$ will mainly be used in Section \ref{sec:randomization} when we connect the pathwise problem to the problem with (Brownian) common noise. In Sections \ref{sec:pathwiseproblem} and \ref{sec:pathwiseMFE}, we will fix a $\bB=(B,\mB)\in \hat \sC^{0,\alpha}$.

For any path $Z$, we denote by $\delta Z$ its increment process, i.e., $\delta Z_{s,t} = Z_t-Z_s$ for any $(s,t)\in \triangle:=\{(s,t):s\leq t\}$.

\begin{definition}[Definition 3.1 of \cite{RSDE}, stochastic controlled rough path]
	For $\mP\in \cP(\Omega)$, $(Z,Z')$ is said to be a stochastic controlled rough path by $\bB$, with regularity $(\beta,\beta')$ and integrability $(m,n)$, if:
	\begin{enumerate}
	\item $Z$ and $Z'$ are $\mF$-progressively measurable;
	\item $\delta Z\in C^{\beta}_2L^{m,n}_{\mP}$;
	\item $Z'\in C^{\beta'} L^{m,n}_{\mP}$;
	\item With $R^Z_{s,t}:= \delta Z_{s,t}-Z'_s\delta B_{s,t}$, we have $\mE^{\mP}_{\cdot}[R^Z] \in C^{\beta+\beta'}_2 L^{m,n}_{\mP}$.
	\end{enumerate}
We denote by $\bD_\bB^{\beta,\beta'} L^{m,n}_{\mP}$ the set of all stochastic controlled rough paths endowed with the norm
\begin{align}\label{def:norm:scrp}
\| (Z,Z')\|_{\bB;\beta,\beta';m,n}:= \|\delta Z\|_{\beta;m,n}+\|Z'\|_{\beta';m,n}+\|\mE^{\mP}_{\cdot} [R^Z]\|_{\beta+\beta';m,n}.
\end{align}
\end{definition}

\begin{remark}
To ensure that a pair $(Z,Z')$ is a stochastic controlled rough path, one typically requires $Z'$ to take values in $\mR^k\otimes V$ if $Z$ takes values in $V$, for some finite-dimensional Euclidean space $V$. In this way, $Z'\delta B_{s,t}$ takes values in $V$.
\end{remark}

Formulating the martingale problem with relaxed controls in the rough-path setting requires the RSDE solution theory of \cite{RSDE}.
For this, we only need the deterministic controlled vector field: it enables us to build the controlled lift $(\hat{\sigma}_0, \hat{\sigma}')$ of $\sigma_0$ pathwise (depending on $\mu$ and $X$).
With this lift in hand, we can verify that $(X, \hat{\sigma}_0)$ is a stochastic controlled rough path under $P$, as required by the martingale problem.


\begin{definition}[Definition 3.8 of \cite{RSDE}, controlled vector field] A pair
\begin{align}
(f,f'):[0,T]\to C_b^\gamma(\mR^d;\mR^{d\times k})	\times C_b^{\gamma-1}(\mR^d; \mR^{d\times k\times k})
\end{align}
is said to be a controlled vector field by $\bB$, with regularity $(\beta,\beta')$ if:
\begin{enumerate}\label{def:cvf}
\item $(f,f')$ are uniformly bounded in the sense that
\begin{align}
\sup_{t\in [0,T]} \{|f_t|_\gamma + |f'_t|_{\gamma-1}\}<\infty.
\end{align}
\item Denoting
\begin{align}
\llb Z \rrb_{\kappa}: = \sup_{(s,t)\in \triangle:s\neq t}\frac{\sup_{x\in \mR^d}|Z_{s,t}(x)|}{(t-s)^\kappa},
\end{align}
we have
\begin{align}
\llb \delta f\rrb_{\beta}+\llb \delta f'\rrb_{\beta'}+\llb \delta \nabla f\rrb_{\beta'}<\infty.
\end{align}
\item With $R^f_{s,t}: = f_t-f_s-f'_s\delta B_{s,t}$, we have $\llb R^f\rrb_{\beta+\beta'}<\infty$.
\end{enumerate}
Let $\sD^{\beta,\beta'}_\bB C^\gamma_b$ be the set of controlled vector fields with the norm defined by
\begin{align}
\llb(f,f')\rrb_{\bB;\beta,\beta'}:=\llb\delta f\rrb_{\beta}+\llb \delta f'\rrb_{\beta'}+\llb \delta \nabla f\rrb_{\beta'}+\llb R^f\rrb_{\beta+\beta'}+\sup_{t\in [0,T]} \{|f_t|_\gamma + |f'_t|_{\gamma-1}\}.
\end{align}

\end{definition}

Let us also introduce the following space of measure flows in order to construct controlled vector fields, which is needed for the well-posedness of RSDE.

\begin{definition}\label{def:domain-mu}
We denote by $\cL(\bD^{\beta,\beta'}_\bB L^{m,n},\lambda)$ the set of $\bmu\in \cP(\cX)$ that can be represented as the distribution of stochastic controlled rough paths in $\bD^{\beta,\beta'}_\bB L^{m,n}$, {with initial distribution $\lambda$}. More specifically, $\bmu\in \cL(\bD^{\beta.\beta'}_\bB L^{m,n},\lambda)$ if and only if, on some filtered probability space $(\Omega',\F',\mF',\mP')$, $\mu=\cL^{\mP'}(Y)$ for some $(Y,Y')\in \bD^{\beta,\beta'}_\bB L^{m,n}_{\mP'}$, with $\cL^{\mP'}(Y_0) = \lambda$. We also denote by $\mu_t = \cL^{\mP'}(Y_t)$ and call $(Y,Y')$ a {\it representation} of $\bmu$.
\end{definition}

The following lemma provides the construction of controlled vector fields needed for the solvability of RSDE. See a related result in the study of Fokker-Planck type PDE in Proposition 4.10 of \cite{RFPK}.

\begin{lemma}\label{lemma:construct_cvf}
Fix indices $(\beta,\beta')\in \Pi$, $ 2\leq m\leq n\leq  \infty$. For any $\bmu\in \cL(\bD^{\beta,\beta'}_\bB L^{m,n},\lambda)$ with arbitrary $\lambda\in \cP(\mR^d)$, define $\tilde\sigma^0_t(x):= \sigma^0(t,x,\mu_t)$. Then there exists a $\tilde \sigma':[0,T]\to C_b^\gamma(\mR^d;\mR^{d\times k\times k})$ such that $(\tilde\sigma^0,\tilde\sigma')\in \sD ^{\beta,\beta'}_\bB C_b^\gamma$. Moreover, for any representation $(Y,Y')$ of $\bmu$, we have the following estimates:
\begin{align}\label{lemma:construct_cvf:est}
\llb (\tilde\sigma^0,\tilde \sigma') \rrb_{\bB;\beta,\beta'}\lesssim	1+\|(Y,Y')\|_{\bB;\beta,\beta';m,n}+\|(Y,Y')\|^2_{\bB;\beta,\beta';m,n},
\end{align}
	where the ``$\lesssim$" hides constants that do not depend on $(Y,Y')$.
\end{lemma}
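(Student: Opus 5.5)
The natural candidate for the Gubinelli derivative comes from a chain rule along the measure flow. Since only $\mu_t$ varies with $t$ besides the explicit time dependence, and $\mu_t=\cL(Y_t)$ with $\delta Y_{s,t}\approx Y'_s\delta B_{s,t}$, the Lions derivative suggests the definition
\begin{align}
\tilde\sigma'_t(x):=\mE^{\mP'}\big[\partial_\mu\sigma^0(t,x,\mu_t)(Y_t)\,Y'_t\big],
\end{align}
where the product contracts the $\mR^d$-index of $\partial_\mu\sigma^0$ with that of $Y'_t\in\mR^k\otimes\mR^d$. This yields a $d\times k\times k$-valued field.

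First, I will show that this definition does not depend on the chosen representation. The increments of $\tilde\sigma^0$ depend only on $\bmu$, and the remainder estimate below forces any two candidate derivatives to agree up to an error $O(|t-s|^{\beta'})$ times $\delta B_{s,t}$. For the norm estimate itself, however, it is enough to fix a representation and bound everything in terms of it; this is what the stated inequality does. The steps are as follows.

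\textbf{Uniform bounds.} $\sup_t|\tilde\sigma^0_t|_\gamma$ is bounded by Assumption \ref{ass:}(4). For $\tilde\sigma'$, we have $|\tilde\sigma'_t|_{\gamma-1}\le \sup|\partial_\mu\sigma^0|_{\gamma-1}\,\mE|Y'_t|$. Moreover $\sup_t\|Y'_t\|_m\lesssim \|Y'_0\|_m+\|Y'\|_{\beta';m,n}$. I take the $|Y'_0|$ contribution as part of the representation norm; if it is not, I absorb a $Y'_0$ term into the constant.

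\textbf{H\"older regularity of $\delta\tilde\sigma^0$, $\delta\tilde\sigma'$ and $\delta\nabla\tilde\sigma^0$.} Assumption \ref{ass:}(3) gives bounds by $|t-s|+\cW_2(\mu_s,\mu_t)$. Coupling through $(Y_s,Y_t)$ gives $\cW_2(\mu_s,\mu_t)\le\|\delta Y_{s,t}\|_2\le\|\delta Y\|_{\beta;m,n}|t-s|^\beta$, which is valid since $m\ge2$. For $\delta\tilde\sigma'$, I split the difference as
\begin{align}
\mE\big[(\partial_\mu\sigma^0(t,x,\mu_t)(Y_t)-\partial_\mu\sigma^0(s,x,\mu_s)(Y_s))Y'_t\big]+\mE\big[\partial_\mu\sigma^0(s,x,\mu_s)(Y_s)\,\delta Y'_{s,t}\big].
\end{align}
The second term is $\lesssim\|Y'\|_{\beta'}|t-s|^{\beta'}$. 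The first requires regularity of $\partial_\mu\sigma^0$ in its $y$-argument. Here I use its $(\gamma-1)$-H\"older continuity, which is available from $C^{\gamma-1}$ regularity in the spatial variable (I read the assumption as also covering the $y$-slot). This gives a factor $\mE|\delta Y_{s,t}|^{\gamma-1}|Y'_t|\lesssim |t-s|^{(\gamma-1)\beta}$ times the norms, and $\beta'\le(\gamma-1)\beta$ is exactly what makes this sufficient. The product structure here is where the quadratic term in \eqref{lemma:construct_cvf:est} appears.

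\textbf{Remainder estimate, the main obstacle.} I need to show $\sup_x|R^{\tilde\sigma}_{s,t}(x)|\lesssim|t-s|^{\beta+\beta'}$. I write
\begin{align}
\sigma^0(t,x,\mu_t)-\sigma^0(s,x,\mu_s)=[\sigma^0(t,x,\mu_t)-\sigma^0(s,x,\mu_t)]+[\sigma^0(s,x,\mu_t)-\sigma^0(s,x,\mu_s)].
\end{align}
The time part is $O(|t-s|)$, and $1\ge\beta+\beta'$ since $\beta+\beta'\le 2\alpha<1$. For the measure part, I apply the Lions chain rule along the interpolation $Y_s+\theta\delta Y_{s,t}$:
\begin{align}
\int_0^1\mE\big[\partial_\mu\sigma^0(s,x,\cL(Y_s+\theta\delta Y_{s,t}))(Y_s+\theta\delta Y_{s,t})\,\delta Y_{s,t}\big]d\theta .
\end{align}
Replacing the integrand by its value at $\theta=0$ costs $\mE[(|\delta Y|^{\gamma-1}+\|\delta Y\|_2)|\delta Y|]\lesssim|t-s|^{\gamma\beta}$, with quadratic dependence on the norm, and $\gamma\beta\ge\beta+\beta'$. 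It remains to treat $\mE[\partial_\mu\sigma^0(s,x,\mu_s)(Y_s)\,\delta Y_{s,t}]$. Since $\partial_\mu\sigma^0(\cdots)(Y_s)$ is $\F_s$-measurable, I can replace $\delta Y_{s,t}$ by $\mE_s[\delta Y_{s,t}]=Y'_s\delta B_{s,t}+\mE_s[R^Y_{s,t}]$. The first piece is exactly $\tilde\sigma'_s(x)\delta B_{s,t}$. The second is bounded by $\|\mE_\cdot R^Y\|_{\beta+\beta';m,n}|t-s|^{\beta+\beta'}$.

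This conditioning trick is the key point: only the conditional remainder is controlled in the definition of a stochastic controlled rough path, and the expectation structure of the measure derivative is what lets us use it. Collecting all the bounds gives \eqref{lemma:construct_cvf:est}.
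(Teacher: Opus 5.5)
Your proposal is correct and is essentially the paper's proof. It uses the same choice $\tilde\sigma'_t(x)=\mE^{\mP'}[\partial_\mu\sigma^0(t,x,\mu_t)(Y_t)\cdot Y'_t]$, the same time/measure split of $R^{\tilde\sigma^0}$, and the same reduction of $\mE[\partial_\mu\sigma^0(s,x,\mu_s)(Y_s)\,\delta Y_{s,t}]$ to $\tilde\sigma'_s(x)\delta B_{s,t}+\mE[\partial_\mu\sigma^0(s,x,\mu_s)(Y_s)\,\mE_s[R^Y_{s,t}]]$. The differences are minor: you interpolate along $\cL(Y_s+\theta\delta Y_{s,t})$ with the Lions chain rule where the paper interpolates along $\lambda\mu_t+(1-\lambda)\mu_s$ with the linear functional derivative, and you use $(\gamma-1)$-H\"older continuity of $\partial_\mu\sigma^0$ in its $y$-argument where the paper tacitly uses Lipschitz continuity.

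Your aside that $\tilde\sigma'$ is independent of the representation is not justified for a general $\bB$. A bound $(g^1_s-g^2_s)\,\delta B_{s,t}=O(|t-s|^{\beta+\beta'})$ forces $g^1=g^2$ only when $B$ is truly rough. As you say yourself, though, the estimate is only needed for a fixed representation.
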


\begin{proof}
Suppose $\mu_t = \cL^{\mP'}(Y_t)$ on some filtered probability space $(\Omega',\F',\mF',\mP')$. We will show that $\tilde\sigma'_t(x):=\mE^{\mP'}[\partial_\mu\sigma^0(t,x,\mu_t)(Y_t)\cdot Y_t']$ satisfies the requirements. Indeed, condition 1 of Definition \ref{def:cvf} follows directly from the assumptions on $\sigma^0$. To check condition 2, we derive from the assumptions on $\sigma^0$ that
\begin{align}
\sup_{x\in \mR^d} |\tilde\sigma^0_t(x)-\tilde\sigma^0_s(x)|\leq & \sup_{x} |\sigma^0(t,x,\mu_t)-\sigma^0(s,x,\mu_t)| + \sup_{x}|\sigma^0(s,x,\mu_t)-\sigma^0(s,x,\mu_s)|\\
       \lesssim & |t-s| + \cW_2(\mu_t,\mu_s)\\
       \lesssim & |t-s| + \|Y_t-Y_s\|_{L^2_{\mP'}}	\\
       \lesssim & |t-s|+ \|\delta Y\|_{\beta;m,n}|t-s|^\beta \\
       \lesssim & (1+\|(Y,Y')\|_{\bB;\beta,\beta';m,n})|t-s|^\beta.
\end{align}
A similar estimate of $\nabla \tilde \sigma^0$ holds in view of the relation $\beta'\leq (\gamma-1)\beta$. We thus obtain
\begin{align}\label{lemma:construct:cvf:est1}
\llb \delta \tilde \sigma^0 \rrb_\beta + \llb \delta \nabla \tilde\sigma ^0\rrb_{\beta'}\lesssim 1+\|(Y,Y')\|_{\bB;\beta,\beta';m,n}.
\end{align}
Next, we estimate $\llb \tilde \sigma'\rrb_{\beta'}$. To this end, we utilize the properties of $\partial_\mu \sigma^0$ to get that
\begin{align}
\sup_{x\in \mR^d}|\tilde \sigma'_t(x)-\tilde\sigma'_s(x)|\leq & \sup_x \mE^{\mP'}[|\partial_\mu\sigma^0(t,x,\mu_t)(Y_t)- \partial_\mu \sigma^0(s,x,\mu_s)(Y_s)|\cdot |Y_t'|]\\
&+\sup_x \mE^{\mP'}[|\partial_\mu \sigma^0(s,x,\mu_s)(Y_s)|\cdot|Y'_t-Y'_s|]	\\
\lesssim & \mE^{\mP'}[(|t-s|+\|Y_t-Y_s\|_{L^2_{\mP'}}+|Y_t-Y_s|)|Y'_t|]+ \mE^{\mP'}[|Y'_t-Y'_s|]\\
\lesssim & (1+\|\delta Y\|_{\beta;m,n})\sup_{t\in [0,T]}\|Y'_t\|_{L^m_{\mP'}}|t-s|^{\beta}+\|\delta Y'\|_{\beta';m,n}|t-s|^{\beta'}\\
\lesssim & (1+ \|(Y,Y')\|_{\bB;\beta,\beta';m,n})\|(Y,Y')\|_{\bB;\beta,\beta';m,n}|t-s|^{\beta'},
\end{align}
 which gives
 \begin{align}\label{lemma:construct:cvf:est2}
 \llb \delta \tilde \sigma'\rrb_{\beta'} \lesssim 	(1+ \|(Y,Y')\|_{\bB;\beta,\beta';m,n})\|(Y,Y')\|_{\bB;\beta,\beta';m,n}.
 \end{align}
Finally, and most importantly, we verify condition 3. To this end, we first estimate $\llb R^{\tilde \sigma^0}\rrb_{\beta+\beta'}$, in which $R^{\tilde \sigma^0}_{s,t}(x) = \sigma^0(t,x,\mu_t)-\sigma^0(s,x,\mu_s) - \tilde\sigma'_s(x)\delta B_{s,t}$. Invoking the fundamental theorem of calculus to the linear derivative $\delta \sigma^0/\delta \mu$ as well as to the derivative $\partial_x(\delta \sigma^0/\delta\mu)$ respectively, we deduce that
\begin{align}
&\sigma^0(s,x,\mu_t)-\sigma^0(s,x,\mu_s)\\
=& \int_0^1 \mE^{\mP'}\bigg[\frac{\delta \sigma^0}{\delta \mu}(s,x,\lambda \mu_t+(1-\lambda)\mu_s)(Y_t)-	\frac{\delta \sigma^0}{\delta \mu}(s,x,\lambda \mu_t+(1-\lambda)\mu_s)(Y_s)\bigg]d\lambda \\
=&\int_0^1\int_0^1\mE^{\mP'}[\partial_\mu\sigma^0(s,x,\lambda \mu_t+(1-\lambda)\mu_s)(\theta Y_t+(1-\theta)Y_s)\cdot (Y_t-Y_s)]d \theta d \lambda.
\end{align}
As a result, we have the decomposition $\sigma^0(s,x,\mu_t)-\sigma^0(s,x,\mu_s)-\tilde\sigma'_t(x)\delta B_{s,t} = I_1(s,t,x)+I_2(s,t,x)$, where
\begin{align}
I_1(s,t,x) = & \int_0^1\int_0^1 \mE^{\mP'}\Big[\Big(\partial_\mu\sigma^0\big(s,x,\lambda \mu_t+(1-\lambda)\mu_s\big)\big(\theta Y_t+(1-\theta)Y_s\big)\\
& - \partial_\mu\sigma^0(s,x,\mu_s)(Y_s) \Big)\cdot (Y_t-Y_s)\Big]d \lambda d\theta,\\
I_2(s,t,x) = & \mE^{\mP'}[\partial_\mu\sigma^0(s,x,\mu_s)(Y_s)\cdot(Y_t-Y_s-Y'_s\delta B_{s,t})]=\mE^{\mP'}[\partial_\mu\sigma^0(s,x,\mu_s)(Y_s)\cdot R^Y_{s,t}].
\end{align}
Thus, noting that
\begin{align}
\mE^{\mP'}[|\delta Y_{s,t}|^2]\leq& \mE^{\mP'}\big[ (\mE^{\mP'}_s[|\delta Y_{s,t}|^m])^{2/m}\big]\\
\leq& \big( 	\mE^{\mP'}\big[ (\mE^{\mP'}_s[|\delta Y_{s,t}|^m])^{n/m}\big]\big)^{2/n}\\
=&\|\delta Y_{s,t}\|_{m,n}^2\\
\leq &\|\delta Y\|_{\beta;m,n}^2|t-s|^{2\beta},
\end{align}
we obtain
\begin{align}
\sup_x|I_1(s,t,x)|\leq & \mE^{\mP'}[(|t-s|+\cW_2(\mu_t,\mu_s)+|Y_t-Y_s|)|Y_t-Y_s|]\\
\lesssim &\|\delta Y\|_{\beta;m,n}|t-s|^{1+\beta}+\|\delta Y\|^2_{\beta;m,n}|t-s|^{2\beta}\\
\lesssim &(1+\|(Y,Y')\|_{\bB;\beta,\beta';m,n})\|(Y,Y')\|_{\bB;\beta,\beta';m,n}|t-s|^{\beta+\beta'}.\label{lemma:construct:cvf:estI1}
\end{align}
On the other hand, it holds that
\begin{align}
\sup_x |I_2(s,t,x)| \lesssim &\|\mE^{\mP'}_{s}[R^Y_{s,t}]\|_{L^n_{\mP'}}\\
 \lesssim& \|\mE^{\mP'}_{\cdot}[R^Y]\|_{\beta+\beta';n}|t-s|^{\beta+\beta'}\\
 \lesssim& \|(Y,Y')\|_{\bB;\beta,\beta';m,n} |t-s|^{\beta+\beta'}.	\label{lemma:construct:cvf:estI2}
\end{align}
Combining \eqref{lemma:construct:cvf:estI1}-\eqref{lemma:construct:cvf:estI2} yields that
\begin{align}
\sup_x |R^{\tilde\sigma^0}_{s,t}(x)|\leq& \sup_x|\sigma^0(t,x,\mu_t)-\sigma^0(s,x,\mu_t)|	+ \sup_x |I_1(s,t,x)| + \sup_x |I_2(s,t,x)|\\
\lesssim & (1+\|(Y,Y')\|_{\bB;\beta,\beta';m,n}+\|(Y,Y')\|^2_{\bB;\beta,\beta';m,n})|t-s|^{\beta+\beta'},
\end{align}
which then gives
\begin{align}\label{lemma:construct:cvf:est3}
\llb R^{\tilde\sigma^0} \rrb_{\beta+\beta'}\lesssim 1+\|(Y,Y')\|_{\bB;\beta,\beta';m,n}+\|(Y,Y')\|^2_{\bB;\beta,\beta';m,n}.
\end{align}
Putting \eqref{lemma:construct:cvf:est1}, \eqref{lemma:construct:cvf:est2} and \eqref{lemma:construct:cvf:est3} together, we conclude \eqref{lemma:construct_cvf:est} as desired.
\end{proof}

We are now ready to define relaxed controls for the pathwise problem with rough common noise.

\begin{definition}[Relaxed control]\label{def:pathwise-relax-control}
Fix $\bB\in \hat\sC^{0,\alpha}$, $\lambda\in \cP_m(\mR^d)$, $\bmu\in \cL(\bD^{\beta,\beta'}_\bB L^{m,\infty},\lambda)$, indices $(\beta,\beta')\in \Pi$, and $m\geq 2$. A probability measure $\mP$ on $(\Omega,\F)$ is said to be an admissible relaxed control with the initial distribution $\lambda$ if:
\begin{enumerate}
	\item $(X_0)_\#\mP = \lambda$, and $X_0$ is independent of $(\Lambda,W)$ under $\mP$;
	\item Let $(\tilde\sigma^0,\tilde\sigma')\in \sD_{\bB;\beta,\beta'}$ be given from Lemma \ref{lemma:construct_cvf}. Then, with $\hat\sigma_t'(x) = \nabla \tilde \sigma^0_t(x)\tilde \sigma^0_t(x)+\tilde\sigma'_t(x)$,
    $\hat \sigma^0_t=\tilde \sigma_t^0(X_t)$, $\hat \sigma'_t = \hat \sigma'_t(X_t)$, we have $(X,\hat \sigma^0)\in \bD^{\beta,\beta'}_\bB L^{m,\infty}_{\mP}$ and $(\hat\sigma^0,\hat\sigma')\in \bD^{\beta,\beta'}_\bB L^{m,\infty}_{\mP}$.

	\item $\forall \phi \in C_0^\infty(\mR^{d+l})$, we have
	\begin{align}
            M_t(\phi): =& \phi(X_t,W_t) - \int_0^t\int_U (\widehat\mL\phi)(s,X_s,W_s,\mu_s,u) \Lambda_s(du)ds - \int_0^t \big(\Ti_s(\phi),\Ti_s'(\phi)\big) d \bB,  \\
            &- \int_0^t \Tr\big(\sigma^0(\sigma^0)^\mt(s,X_s, \mu_s)\nabla^2_x\phi(X_s,W_s)\big)d s \label{def:Mphiomega1-XW}
        \end{align}
        is a $(\mP,\mF)$-martingale, where
            \begin{align}
	&\Ti_t(\phi) :=\nabla_x \phi(X_t,W_t)\tilde \sigma_t^0(X_t)\label{eq:Ti-def} \\
	&\Ti'_t(\phi) :=\nabla^2_x\phi(X_t,W_t)(\tilde \sigma^0_t(X_t),\tilde\sigma^0_t(X_t)) +\nabla_x\phi(X_t,W_t)\hat\sigma'_t(X_t),\label{eq:Tip-def}
	\end{align}
    and the extended generator $\widehat \mL$ is given by
    \begin{align}
         \widehat\mL \phi(t,x,w,\mu,u):=& \hat b(t,x,\mu,u)^\mt \nabla \phi(x,w)+ \frac{1}{2}\Tr\bigg(\hat a(t,x,\mu)\nabla^2\phi(x,w)\bigg),\label{eq:extended-generator}
    \end{align}
    and
    \begin{align}
        \hat b(t,x,\mu,u)=\left(\begin{array}{c}
             b(t,x,\mu,u)  \\
              0 \\
        \end{array}\right), \quad \hat a(t,x,\mu) = \left(\begin{array}{c}
             \sigma(t,x,\mu)  \\
              I_{n\times n}
        \end{array}   \right)\left(\begin{array}{c}
             \sigma(t,x,\mu)  \\
              I_{n\times n}
        \end{array}   \right)^\mt. \label{eq:extend-b-sigma}
    \end{align}

\end{enumerate}
We denote by $R(\bB,\lambda,\bmu)$ the set of all admissible relaxed controls under rough-path noise $\bB$, an initial distribution $\lambda$, and an input measure flow $\bmu$. For notational convenience, we omit the dependence on $\lambda$ when no confusion arises.
\end{definition}

\begin{remark}\label{martignale-condition-special-case}
    Condition 3 in Definition \ref{def:pathwise-relax-control} implies the following two special cases, which will be used implicitly throughout this paper:
    \begin{enumerate}
        \item Taking $\phi \in C_0^\infty(\mR^l)$ to depend only on $w$, we have that
        \begin{align}
            \phi(W_t)-\frac{1}{2}\int_0^t\triangle\phi(W_s)d s, t\in [0,T],
        \end{align}
        is a $(\mP,\mF)$-martingale. By a standard approximation argument replacing $\phi$ with $\phi_i(w)=w^i$ and $\phi_{i,j}(w)=w^iw^j$, $1\leq i,j\leq l$, L\'evy's characterization implies that $W$ is a Brownian motion under $(\mP,\mF)$;
        \item Taking $\phi \in C_0^\infty(\mR^d)$ to depend only on $x$, we have
       	\begin{align}
            M^X_t(\phi): =& \phi(X_t) - \int_0^t\int_U (\mL\phi)(s,X_s,\mu_s,u) \Lambda_s(du)ds - \int_0^t \big(\Ti_s(\phi),\Ti_s'(\phi)\big) d \bB.
\label{def:Mphiomega1}
        \end{align}
        is a $(\mP,\mF)$-martingale. Here, we use the generator in $x$:
                \begin{align}
         \mL \phi(t,x,\mu,u):=& b(t,x,\mu,u)^\mt \nabla \phi(x) \\
        &+ \frac{1}{2}\Tr\bigg(\Big((\sigma \sigma^\mt)(t,x,\mu)+\big(\sigma^0(\sigma^0)^\mt\big)(t,x,\mu)\Big)\nabla^2\phi(x)\bigg).
        \end{align}
    This type of martingale condition is new to the existing literature due to the presence of rough integrals. A large part of our technical results are devoted to the treatment of $M^X(\phi)$; see Appendix \ref{app:rough-martingale}.
    \end{enumerate}
\end{remark}

\begin{remark}
    To use classical tools from stochastic analysis as well as the newly developed RSDE theory, for each {\it fixed} $\mP\in R(\bB,\bmu)$, we need to augment $\mF$ to a larger filtration satisfying the usual conditions (e.g., by right-continuous augmentation and $\mP$-completion). However, our discussion is not affected by this augmentation, because all martingales in this paper naturally admit continuous paths for {\it every} random element. We shall not explicitly point out this modification of $\mF$ throughout the paper.
\end{remark}

\begin{remark}
We point out a useful observation regarding the stochastic controlled rough path pair $(\Ti(\phi),\Ti'(\phi))$. In fact, for fixed $\phi$ (suppose for simplicity that it depends only on $x$) and $\bmu$, it can be written as the composition of a controlled vector field and another stochastic controlled rough path. Indeed, with $(\tilde\sigma^0,\tilde \sigma')$ given in Lemma \ref{lemma:construct_cvf}, let us choose $g_t(x):=\nabla \phi(x)\tilde\sigma^0_t(x)$ and $g'_t(x) = \nabla \phi(x) \tilde\sigma'_t(x)$. Then $(\Ti(\phi),\Ti'(\phi)) = (g_t,g_t')\circ(X,\hat\sigma)=(g_t,g_t')\circ (X,\tilde\sigma^0(X))$. This observation will be used in Lemma \ref{lemma:T-phi-rough-path-norm} and implicitly when translating the martingale property of $M(\phi)$ between different spaces.
\end{remark}

For a fixed rough path noise $\bB$, the objective functional of the MFG problem in \eqref{orig-obj} passes to the pathwise cost functional under the relaxed control:
\begin{align}\label{cost-pathwise}
    J_{\rm pw}(\bmu,\mP): =\mE^{\mP}\bigg[\int_0^T \int_U f(t, X_t, \mu_t,u)\Lambda_t(du)dt + g(X_T,\mu_T)\bigg], \quad \mP\in R(\bB,\lambda,\bmu).
\end{align}

We then have the following definition of Nash equilibrium for the MFG with rough common noise:

\begin{definition}\label{def:R-opt-pathwise}
If a $\mP_*\in R(\bB,\lambda,\bmu)$ is such that
\[
J_{\rm pw}(\bmu,\mP_*)=\inf_{\mP\in R(\bB,\lambda,\bmu)}J_{\rm pw}(\bmu,\mP),
\]
it is called a best response pathwise relaxed control with respect to $\bB$, $\lambda$ and $\bmu$. We denote $\mP_* \in R^{\rm opt}(\bB,\lambda,\bmu)$. Again, when there is no risk of confusion, we omit the dependence on $\lambda$.
\end{definition}

\begin{definition}\label{def:MFE-pathwise}
A pair $(\mP_*, \bmu_*)\in \cP(\Omega)\times \cL(\bD_{\bB}^{\beta,\beta'}L^{m,\infty})$ is said to be a pathwise MFE if:
\begin{enumerate}
	\item {\bf Optimality}: $\mP_*\in R^{\rm opt}(\bB,\bmu_*)$,
	\item {\bf Consistency}: $(\mu_*)_t = \cL^{\mP_*}(X_t),\forall t\in [0,T].$
\end{enumerate}
\end{definition}

As the first step to establish the existence of pathwise MFE, we argue that Definition \ref{def:pathwise-relax-control} is an appropriate definition because $R(\bB,\bmu)$ is non-empty in the subsequent section.

\subsection{Non-emptiness of \texorpdfstring{$R(\bB,\bmu)$}{R(omega1,lambda,mu)}}

We establish an enhanced existence result for any prescribed joint law $Q$ of the control distribution $(\Lambda,W)$; see Proposition \ref{lemma:nonemptyness:strong} below. As a direct consequence, $R(\bB,\lambda,\bmu)\neq \varnothing$ for any appropriate $\lambda$ and $\bmu$. This stronger result is crucial for establishing the upper hemicontinuity of the best-response mapping in Section \ref{sec:pathwiseMFE}. We also record a uniqueness result in Proposition \ref{lemma:R:uniqueness}, relying crucially on the enlarged canonical space that explicitly encodes the information of $W$.

To establish the existence result, we need to be careful about the requirement that $W$ is a Brownian motion under the overall filtration $\mF$, not just under its natural filtration. This amounts to requiring that $Q$ is a {\it causal coupling}. To state the definition precisely, we abuse notation and, when needed, treat $\Lambda$ and $W$ as coordinate mappings on $\cQ\times \cW$ (i.e., ignoring the coordinate $X$).

\begin{definition}
\begin{enumerate}
\item Define $\cP_W(\cQ\times \cW)$ to be the set of probability measures on $\cQ\times \cW$ such that $W_\#=\mP_W$, the Wiener measure on $\cW$.
\item $Q\in \cP_W(\cQ\times \cW)$ is said to be a causal coupling if for any $t\in [0,T]$, $\F^\Lambda_t$ is conditionally independent of $\F^W_T$, given $\F^W_t$, under $Q$.
\end{enumerate}
\end{definition}

A simple example of a causal coupling is open-loop control, in which case $\F^\Lambda_t\subset \F^W_t$. More generally, we also allow external randomization in $\Lambda$, as long as non-anticipation is preserved. To avoid digressions from our main topics, we collect two lemmas about causality and Brownian motion properties in Appendix \ref{app:causal-coupling}.

\begin{proposition}\label{lemma:nonemptyness:strong}
    Fix indices $(\beta,\beta')\in \Pi$, $m\geq 2$. For any causal coupling $Q\in \cP_W(\cQ\times \cW)$, $\lambda \in \cP_m(\mR^d)$, $ \bmu\in \cL(\bD^{\beta,\beta'}_\bB L^{m,\infty})$, there exists a $\mP\in R(\bB,\bmu)$ such that $(\Lambda,W)_\#\mP = Q$.
\end{proposition}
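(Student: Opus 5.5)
We will construct $\mP$ from a strong solution of the RSDE on a suitably enlarged probability space. Take the product space $\tilde\Omega:=\mR^d\times\cQ\times\cW$ with measure $\lambda\otimes Q$, carrying the coordinates $(\xi,\Lambda,W)$, and equip it with the filtration $\tilde\F_t:=\sigma(\xi)\vee\F^\Lambda_t\vee\F^W_t$, augmented. Under $Q$, $W$ is a Wiener process in its own filtration. Since $Q$ is a causal coupling and $\xi$ is independent of $(\Lambda,W)$, the auxiliary causality lemmas of Appendix \ref{app:causal-coupling} give that $W$ is an $\tilde\mF$-Brownian motion. We then fix $\bmu\in\cL(\bD^{\beta,\beta'}_\bB L^{m,\infty},\lambda)$ and use Lemma \ref{lemma:construct_cvf} to obtain the deterministic controlled vector field $(\tilde\sigma^0,\tilde\sigma')\in\sD^{\beta,\beta'}_\bB C^\gamma_b$.

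On $(\tilde\Omega,\tilde\mF)$ we consider the RSDE with random, $\tilde\mF$-progressive drift $\bar b_t(x):=\int_U b(t,x,\mu_t,u)\Lambda_t(du)$, diffusion $\sigma(t,x,\mu_t)$ and rough coefficient $(\tilde\sigma^0,\tilde\sigma')$:
\begin{align}
dX_t=\bar b_t(X_t)dt+\sigma(t,X_t,\mu_t)dW_t+(\tilde\sigma^0_t,\tilde\sigma'_t)(X_t)\,d\bB_t,\qquad X_0=\xi.
\end{align}
The coefficients are bounded and Lipschitz in $x$ uniformly in $(t,\omega)$, and $\xi\in L^m$. The well-posedness theorem of \cite{RSDE} for RSDEs with controlled vector fields and random progressively measurable drift/diffusion therefore yields a unique strong solution $X$. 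Moreover, $(X,\tilde\sigma^0(X))\in\bD^{\beta,\beta'}_\bB L^{m,\infty}$, with the Gubinelli derivative of $\tilde\sigma^0(X)$ equal to $\nabla\tilde\sigma^0\tilde\sigma^0+\tilde\sigma'$ evaluated at $X$, by the composition lemma for controlled vector fields in \cite{RSDE}. This is the candidate for Condition 2. We define $\mP:=(X,\Lambda,W)_\#(\lambda\otimes Q)$. Then $(\Lambda,W)_\#\mP=Q$, and Condition 1 holds by construction.

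Condition 3 follows from the rough It\^o formula of \cite{RSDE} applied to $\phi(X_t,W_t)$. It gives $M_t(\phi)$ as a stochastic integral against $W$ under $\tilde\mF$, hence a martingale. The correction term $\Tr(\sigma^0(\sigma^0)^\mt\nabla^2\phi)$ comes from the bracket $[\bB]_{s,t}=(t-s)I$, and the integrand $\Ti'(\phi)$ comes from the composition rule above. Since $M(\phi)$ is a measurable functional of $(X,\Lambda,W)$, the martingale property transfers from $\tilde\mF$ to the smaller canonical filtration $\mF$ on $\Omega$ by the tower property.

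The main obstacle is Condition 2 under $\mP$. The norms in $\bD^{\beta,\beta'}_\bB L^{m,\infty}$ involve conditional expectations $\mE_s[\cdot]$, which depend on the filtration. The estimates hold under $\tilde\mF$, but Condition 2 requires them under the canonical $\mF$ pushed forward by $(X,\Lambda,W)$. To handle this:
\begin{itemize}
\item For the $\|\cdot\|_{m,\infty}$ parts, the conditional $L^m$-norm with respect to the smaller $\sigma$-algebra is bounded by the $L^\infty$ norm of the conditional $L^m$-norm with respect to the larger one, by conditional Jensen.
\item For the remainder term $\mE_s[R]$, Jensen alone does not suffice. We note that $\F^{X,\Lambda,W}_s\subset\tilde\F_s$, so $\mE[R_{s,t}\mid\F^{X,\Lambda,W}_s]=\mE[\mE[R_{s,t}\mid\tilde\F_s]\mid\F^{X,\Lambda,W}_s]$. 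This is again bounded in $L^\infty$ by $\|\tilde\mE_s[R_{s,t}]\|_{L^\infty}$.
\end{itemize}
Since $\mP$ is the image law, these bounds carry over to the canonical space. The remaining points to check are that progressive measurability and $L^m$-continuity survive, and the same argument applied to $(\hat\sigma^0,\hat\sigma')$.
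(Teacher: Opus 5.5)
Your proposal is correct and follows essentially the same route as the paper: a strong $L^{m,\infty}$ solution of the RSDE on an enlarged space carrying $(\xi,\Lambda,W)\sim\lambda\otimes Q$ (with $W$ an $\tilde\mF$-Brownian motion by causality), Condition 2 obtained by shrinking the filtration through the tower property together with the law-invariance of conditional moments under the natural filtration (the paper's Lemma \ref{lemma:isometry}), and Condition 3 via the rough It\^o formula followed by transfer to the canonical space. Your tower-property bound for $\mE_s[R_{s,t}]$ is in fact cleaner than the equality the paper simply asserts; the one step you should spell out is that the rough integral constructed on $(\Omega,\mF,\mP)$ has the same joint law with $(X,\Lambda,W)$ as the one on $\tilde\Omega$, which the paper justifies by writing both as limits in probability of the same Riemann sums $\sum(\Ti_u(\phi)\delta B_{u,v}+\Ti'_u(\phi)\mB_{u,v})$.
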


Fix an arbitrary causal coupling $Q\in \cP_W(\cQ\times \cW)$, $\lambda \in \cP_m(\mR^d)$ and $ \bmu\in \cL(\bD^{\beta,\beta'}_\bB L^{m,\infty})$. Suppose that $(\tilde\sigma^0,\tilde\sigma')\in \sD^{\beta,\beta'}_\bB C_b^\gamma$ is given by Lemma \ref{lemma:construct_cvf}. To prove Proposition \ref{lemma:nonemptyness:strong}, we use the strong solvability of RSDEs. Consider a sufficiently rich probability space $(\tilde \Omega,\tilde\mF,\tilde \F,\tilde \mP)$ supporting a $\mR^d$-valued random variable $\tilde \xi$ with distribution $\lambda$, a $\cQ$-valued random variable $\tilde \Lambda$, and a Brownian motion $\tilde W$, such that the joint law of $(\tilde \Lambda,\tilde W)$ is $Q$. Suppose in addition that $\tilde \xi$ and $(\tilde \Lambda,\tilde W)$ are independent. Such probability spaces exist thanks to Lemma~\ref{lemma:BMexistence}. Let
\begin{align}
&\bar b(t,x,\mu,\tilde \omega)=\int_Ub(t,x,\mu,u)\tilde\Lambda_t(\tilde \omega)(du)
\end{align}
Then, by Assumption \ref{ass:} (in particular, all Lipschitz coefficients and bounds are uniform in $u$), it is clear that $\bar b$ and $\sigma$ are random bounded Lipschitz functions in the sense of Definition 4.1 of \cite{RSDE}. Therefore, thanks to Theorem 4.6 of \cite{RSDE}, there exists a $L^{m,\infty}$-strong solution to the following RSDE:
\begin{align}
\begin{cases}
&d \tilde X_t = \bar b(t,\tilde X_t,\mu_t,\tilde\omega)dt +  \sigma(t,\tilde X_t,\mu_t)d\tilde W_t+(\tilde \sigma^0_t, \tilde \sigma'_t)(\tilde X_t)d\bB,\\
&\tilde X_0 =\tilde \xi.
\end{cases}
\label{RSDE:weak-solution}
\end{align}
We will prove that the joint distribution of $(\tilde X,\tilde \Lambda,\tilde W)$, denoted by $\mP$, belongs to $R(\bB,\bmu)$, yielding Proposition \ref{lemma:nonemptyness:strong}.

Before giving the proof, we need some technical preparations to handle the non-law-invariant rough path norms. Introduce the following two stochastic processes on $\tilde \Omega$: $\tilde Z_t=\tilde \sigma^0_t(\tilde X_t)$, $\tilde Z'_t = \nabla_x \tilde \sigma^0_t(\tilde X_t)\tilde \sigma^0_t(\tilde X_t)+\tilde \sigma'_t(\tilde X_t)$, and a process on $\Omega$: $\hat \sigma'_t=\nabla_x \tilde \sigma^0_t(X_t)\tilde \sigma^0_t(X_t)+\tilde \sigma'_t( X_t)$. Note also that $\hat \sigma^0_t$, defined in item 2 of Definition \ref{def:pathwise-relax-control}, can be written as $\hat \sigma^0_t = \tilde \sigma^0_t(X_t)$. The following technical result connects the stochastic controlled rough paths on $\tilde \Omega$ to those on $\Omega$. Such a connection is crucial for our results because we need to construct rough integrals on the canonical space $\Omega$.

\begin{lemma}\label{lemma:isometry}
For $\tilde U\in \{\tilde X, \tilde Z,\tilde Z'\}$ and the corresponding $\hat U\in \{X,\hat \sigma^0,\hat\sigma'\}$, we have
\begin{align}\label{eq:isometry}
\big\|\mE^{\tilde \mP}\big[|\delta \tilde U_{s,t}|^m\big|\F^{\tilde X,\tilde \Lambda,\tilde W}_s\big]^{1/m}\big\|_{\infty} = \big\|\mE^{ \mP}\big[|\delta \hat U_{s,t}|^m\big|\F_s\big]^{1/m}\big\|_{\infty},\quad \forall (s,t)\in \triangle.
\end{align}
\end{lemma}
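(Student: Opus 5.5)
The plan is to reduce everything to a statement about joint laws, using the fact that both sides of \eqref{eq:isometry} are determined by the law of the process $(\tilde X,\tilde\Lambda,\tilde W)$. Since $\mP$ is by definition the image of $\tilde\mP$ under $\Psi:=(\tilde X,\tilde\Lambda,\tilde W):\tilde\Omega\to\Omega$, we have $\hat U=\tilde U\circ$ (the appropriate map) in the sense that $\tilde U_t=\hat U_t\circ\Psi$ for each $t$. This holds because $\tilde Z_t=\tilde\sigma^0_t(\tilde X_t)=\hat\sigma^0_t\circ\Psi$ and $\tilde Z'_t=\hat\sigma'_t\circ\Psi$, with $\tilde\sigma^0,\tilde\sigma'$ deterministic functions of $(t,x)$ constructed from $\bmu$ by Lemma \ref{lemma:construct_cvf}. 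Likewise $\F^{\tilde X,\tilde\Lambda,\tilde W}_s=\Psi^{-1}(\F_s)$, where $\F_s=\F^X_s\otimes\F^\Lambda_s\otimes\F^W_s$ is the canonical filtration.

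The key step is the transfer of conditional expectations. Set $H:=|\delta\hat U_{s,t}|^m$, a nonnegative $\F$-measurable function on $\Omega$. Let $h$ be a version of $\mE^{\mP}[H|\F_s]$, which is $\F_s$-measurable. I claim that $h\circ\Psi$ is a version of $\mE^{\tilde\mP}[H\circ\Psi|\Psi^{-1}(\F_s)]$. To see this, take any $A\in\F_s$ and use the change of variables formula:
\[
\mE^{\tilde\mP}[H\circ\Psi\,\mathbf 1_{\Psi^{-1}(A)}]=\mE^{\mP}[H\mathbf 1_A]=\mE^{\mP}[h\mathbf 1_A]=\mE^{\tilde\mP}[h\circ\Psi\,\mathbf 1_{\Psi^{-1}(A)}].
\]
Moreover $h\circ\Psi$ is $\Psi^{-1}(\F_s)$-measurable. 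Monotone convergence handles possibly infinite moments, so integrability is not an issue.

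It then remains to compare essential suprema. For any $c\ge0$ we have $\tilde\mP(h\circ\Psi>c)=\mP(h>c)$, so $\|h^{1/m}\circ\Psi\|_{L^\infty(\tilde\mP)}=\|h^{1/m}\|_{L^\infty(\mP)}$, which is exactly \eqref{eq:isometry}.

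The only delicate point is measurability and filtration bookkeeping. One must check that $\Psi^{-1}(\F_s)$ coincides with the natural filtration $\F^{\tilde X,\tilde\Lambda,\tilde W}_s$. This follows because $\F^\Lambda_s$ is generated by $q\mapsto\int_0^{r}\int_U\varphi\,dq$ for $r\le s$, and these maps pull back to functionals of $\tilde\Lambda$ up to time $s$. One must also confirm that the augmentations mentioned in the earlier Remark do not alter the identity. Completions change conditional expectations only on null sets, so they leave the $L^\infty$ norms unchanged.

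This is precisely why the lemma is stated with the natural filtration of $(\tilde X,\tilde\Lambda,\tilde W)$ rather than the larger filtration $\tilde\F$. Conditioning on the larger filtration would break the law-invariance argument above.
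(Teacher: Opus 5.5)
Your proof is correct and is essentially the paper's argument. Both rest on the change-of-variables formula under $\mP=\Psi_\#\tilde\mP$ together with uniqueness of conditional expectations on the natural filtration, followed by equality of the distributions of the two conditional expectations. The only cosmetic difference is that you pull back a version $h$ from $\Omega$ and check directly that $h\circ\Psi$ is a version on $\tilde\Omega$, whereas the paper writes both conditional expectations as Doob--Dynkin functions $G,\tilde G$ and shows $G=\tilde G$ $\mP$-a.s.; your route is slightly more economical.
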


\begin{proof}
Fix $(s,t)\in \triangle$. Combining the fact that $\hat U$ is $\mF$-adapted and the definitions of $\F^{\tilde X,\tilde \Lambda,\tilde W}_s$ and $\F_s(=\F^{X,\Lambda,W}_s)$, we have the existence of measurable functions $G$ and $\tilde G$ from $\cX\times \cQ\times \cW$ to $\mR$ such that
\begin{align}
	\mE^{\tilde \mP}\big[|\delta \tilde U_{s,t}|^m\big|\F^{\tilde X,\tilde \Lambda,\tilde W}_s\big]=\tilde G(\tilde X_{\cdot\wedge s},\tilde \Lambda|_{[0,s]\times U},\tilde W_{\cdot \wedge s}), \quad \mE^{ \mP}\big[|\delta \hat U_{s,t}|^m\big|\F_s\big]=G(X_{\cdot\wedge s},\Lambda|_{[0,s]\times U},W_{\cdot \wedge s}).
\end{align}
Note that, for any measurable function $\varphi$ on $\cX\times \cQ$,
\begin{align}
	&\mE^{\mP}[G(X_{\cdot\wedge s},,\Lambda|_{[0,s]\times U},W_{\cdot \wedge s})\varphi(X_{\cdot \wedge s},\Lambda|_{[0,s]\times U},W_{\cdot \wedge s})] \\
    \stackrel{\text{(i)}}{=} & \mE^{\mP}[|\delta \hat U_{s,t}|^m \varphi(X_{\cdot \wedge s},\Lambda|_{[0,s]\times U},W_{\cdot \wedge s})] \stackrel{\text{(ii)}}{=} \mE^{\tilde \mP}[|\delta \tilde U_{s,t}|^m \varphi(\tilde X_{\cdot \wedge s},\tilde \Lambda|_{[0,s]\times U},\tilde W_{\cdot \wedge s})]\\
	\stackrel{\text{(iii)}}{=}& \mE^{\tilde \mP}[\tilde G(\tilde X_{\cdot \wedge s},\tilde \Lambda|_{[0,s]\times U},\tilde W_{\cdot \wedge s})\varphi(\tilde X_{\cdot \wedge s},\tilde \Lambda|_{[0,s]\times U},\tilde W_{\cdot \wedge s})]\\
	\stackrel{\text{(iv)}}{=}& \mE^{\mP}[\tilde G( X_{\cdot \wedge s},\Lambda|_{[0,s]\times U},W_{\cdot \wedge s})\varphi(X_{\cdot \wedge s},\Lambda|_{[0,s]\times U},W_{\cdot \wedge s})]. \label{proof:lemma:isometry:eq-rep}
\end{align}
Here, in equalities (ii) and (iv), we use the change-of-variable formula to shift integrations between the two spaces $\tilde \Omega$ and $\Omega$, and in (i) and (iii), we use the orthogonality of conditional expectation. From \eqref{proof:lemma:isometry:eq-rep} we conclude that $G(X_{\cdot \wedge s},\Lambda|_{[0,s]\times U},W_{\cdot \wedge s})=\tilde G(X_{\cdot \wedge s},\Lambda|_{[0,s]\times U},W_{\cdot \wedge s})$, $ \mP$-a.s.. As a consequence, for any $a>0$,
\begin{align}
	\tilde \mP\big(\tilde G(\tilde X_{\cdot \wedge s},\tilde \Lambda|_{[0,s]\times U},\tilde W_{\cdot \wedge s})>a  \big)= \mP\big( \tilde G(X_{\cdot \wedge s},\Lambda|_{[0,s]\times U},W_{\cdot \wedge s})>a\big) =  \mP\big( G(X_{\cdot \wedge s},\Lambda|_{[0,s]\times U},W_{\cdot \wedge s})>a\big).
\end{align}
Employing the $\infty$-norm, we obtain \eqref{eq:isometry}.
\end{proof}

\begin{proof}[Proof of Proposition \ref{lemma:nonemptyness:strong}]
	As already mentioned, we shall prove that the distribution of the solution to \eqref{RSDE:weak-solution} belongs to $R(\bB,\bmu)$. Condition 1 of Definition \ref{def:pathwise-relax-control} is clear. Let us verify conditions 2 and 3 in the following two main steps.

{\it Verifying condition 2:} We note that, for $\tilde U\in \{\tilde X, \tilde Z,\tilde Z'\}$, by tower property of the conditional expectation,
	\begin{align}
	\mE^{\tilde \mP}[|\delta \tilde U_{s,t}|^m|\F^{\tilde X,\tilde \Lambda,\tilde W}_s]^{1/m}=& 	\mE^{\tilde \mP}\big[\mE^{\tilde \mP}[|\delta \tilde U_{s,t}|^m|\tilde \F_s]\big|\F^{\tilde X,\tilde \Lambda, \tilde W}_s\big]^{1/m}\\
	\leq & \|\mE^{\tilde \mP}[|\delta \tilde U_{s,t}|^m|\tilde \F_s]\|^{1/m}_\infty.\\
    =&\|\mE^{\tilde \mP}[|\delta \tilde U_{s,t}|^m|\tilde \F_s]^{1/m}\|_\infty
	\end{align}
Thus, by Lemma \ref{lemma:isometry},
\begin{align}\label{eq:change-the-filtration}
	\|\mE^{ \mP}[|\delta \hat U_{s,t}|^m|\F_s]^{1/m}\|_{\infty}\leq \|\mE^{\tilde \mP}[|\delta \tilde U_{s,t}|^m|\tilde \F_s]^{1/m}\|_{\infty}.
\end{align}
On the other hand, we have $\|\mE^{\tilde\mP}_{\cdot}[R^{\tilde Z}]\|_{\infty}=\|\mE^{\check\mP}_{\cdot}[R^{\hat\sigma}]\|_{\infty}$ and $\|\mE^{\tilde\mP}_{\cdot}[R^{\tilde Z'}]\|_{\infty}=\|\mE^{\check\mP}_{\cdot}[R^{\hat\sigma'}]\|_{\infty}$. By the definition in \eqref{def:norm:scrp}, we conclude
\begin{align}
\|(\hat \sigma^0,\hat \sigma')\|_{\bB;\beta,\beta';m,\infty}\leq \|(\tilde Z,\tilde Z')\|_{\bB;\beta,\beta';m,\infty}.
\end{align}
Combining an a priori estimate for RSDEs (c.f., Proposition 4.5 of \cite{RSDE}) and \eqref{lemma:construct_cvf:est}, we get
\begin{align}
    &\|(X,\hat\sigma^0)\|_{\bB;\beta,\beta';m,\infty}\lesssim M_{\bmu}(1+M_{\bmu}\|\bB\|_{\alpha})^{1/\beta'}, \label{X-rough-path-norm}\\
	&\|(\hat \sigma^0,\hat \sigma')\|_{\bB;\beta,\beta';m,\infty}\lesssim M_{\bmu}^2(1+M_{\bmu}\|\bB\|_{\alpha})^{(\gamma-1)/\beta'},\label{sigma0-rough-path-norm}
\end{align}
with
\begin{align}
M_{\bmu} = 1+\|(Y,Y')\|_{\bB;\beta,\beta';m,n}+\|(Y,Y')\|^2_{\bB;\beta,\beta';m,n}.
\end{align}

{\it Verifying condition 3:} To this end, we fix a test function $\phi\in C_0^\infty(\mR^{d+l})$. By \eqref{X-rough-path-norm}, \eqref{sigma0-rough-path-norm} and the assumptions on $\sigma^0$, we have $(\Ti(\phi),\Ti'(\phi))\in \bD^{\beta,\beta'}_{\bB}L^{m,\infty}_{\mP}$. Consequently, the rough integral on the right-hand side of \eqref{def:Mphiomega1} is well defined. Moreover, for any $t\in [0,T]$, it is the limit in $\mP$-probability, as $|\cP|\to 0$, of the following Riemann sum (c.f. Theorems 2.8 and 3.5 of \cite{RSDE}):
\begin{align}
A^{\cP}_t:=\sum_{[u,v]\in \cP,0\leq u< t} (\Ti_u(\phi)\delta B_{u,v}+\Ti'_u(\phi)\mB_{u,v}),
\end{align}
	where $\cP$ is a partition of $[0,T]$ and $|\cP|$ is the mesh size. On the other hand, recall the notation $(\tilde \Omega,\tilde \F,\tilde \mF, \tilde \mP,\tilde W,\tilde X)$ for the weak solution to \eqref{RSDE:weak-solution}, as well as the definition of the processes $\tilde Z$ and $\tilde Z'$ on $\tilde \Omega$. Let us consider
\begin{align}
  \tilde M_t(\phi): =& \phi(\tilde X_t,\tilde W_t) - \int_0^t \int_U(\widehat\mL\phi)(s,\tilde X_s,\tilde W_s,\mu_s,u)\tilde\Lambda_s(du) ds - \int_0^t (\tilde \Ti_\phi,\tilde \Ti_\phi') d \bB,\label{def:Mphitilde}
\end{align}
where
\begin{align}
    &\tilde \Ti_t(\phi) := \nabla \phi(\tilde X_t,\tilde W_t) \tilde Z_t,\\
	&\Ti'_t(\phi) :=  \nabla^2\phi(\tilde X_t,\tilde W_t)(\tilde Z_t,\tilde Z_t) + \nabla\phi(\tilde X_t,\tilde W_t)\tilde Z_t'.
\end{align}
By the rough (stochastic) It\^o formula (Theorem 4.13 of \cite{RSDE}), $\tilde M_\phi$ is a $(\tilde \mP,\tilde \mF)$-martingale because it can be expressed as an It\^o integral against $\tilde W$\footnote{Here, we actually apply the rough It\^o formula to $(\tilde X, \tilde W)$. It of course satisfies an RSDE with coefficients specified by \eqref{eq:extend-b-sigma}.}.

Now, introduce the Riemann sum on $\tilde \Omega$:
    \begin{align}
\tilde A^{\cP}_t=\sum_{[u,v]\in \cP,0\leq u< t} (\tilde\Ti_u(\phi)\delta B_{u,v}+\tilde\Ti'_u(\phi)\mB_{u,v}).
    \end{align}
By definition of $\mP$, we have
\begin{align}
    \cL^{\tilde \mP}(\tilde A^\cP_t,\tilde X,\tilde \Lambda,\tilde W) = \cL^{\mP}(A^{\cP}_t,X,\Lambda,W), \quad \forall t\in [0,T].
\end{align}
Using convergence in probability (and hence convergence in law), we obtain
\begin{align}
\cL^{\tilde \mP}\bigg(\int_0^t\big(\tilde \Ti(\phi),\tilde \Ti'(\phi)\big)d\bB,\tilde X,\tilde \Lambda,\tilde W\bigg)=\cL^{\mP}\bigg(\int_0^t\big( \Ti(\phi), \Ti'(\phi)\big)d\bB, X,\Lambda,W\bigg).
\end{align}
For the other parts of $\tilde M_\phi$ and $M_\phi$, we also have a similar distributional identity because they are classical Lebesgue integrals. In sum, we have shown that $\cL^{\tilde \mP}(\tilde M_t(\phi),\tilde X,\tilde \Lambda,\tilde W)=\cL^{\mP}(M_t(\phi),X,\Lambda,W)$ for each $t\in [0,T]$. Now, for any $0\leq s<t\leq T$, choosing an arbitrary bounded measurable function $\varphi$ on $\Omega$ that is $\F_s$-measurable, we conclude from the martingale property of $\tilde M_\phi$ that
\begin{align}
    \mE^{\mP}[M_t(\phi)\varphi(X_{\cdot \wedge s}, \Lambda|_{[0,s]\times U},W_{\cdot \wedge s})]=&\mE^{\tilde \mP}[\tilde M_t(\phi)\varphi(\tilde X_{\cdot \wedge s},\tilde \Lambda|_{[0,s]\times U},\tilde W_{\cdot \wedge s})] \\
    =&\mE^{\tilde \mP}[\tilde M_s(\phi)\varphi(\tilde X_{\cdot \wedge s},\tilde \Lambda|_{[0,s]\times U},\tilde W_{\cdot \wedge s})] \\
    =&\mE^{\mP}[ M_s(\phi)\varphi( X_{\cdot\wedge s}, \Lambda|_{[0,s]\times U},W_{\cdot \wedge s})] ,
\end{align}
verifying the martingale property of $M_\phi$, which completes the proof.
\end{proof}

\section{Existence of Pathwise MFE}
\label{sec:pathwiseMFE}

\subsection{Step-1: compactness of \texorpdfstring{$R(\bB,\bmu)$}{R(omega1,lambda,mu)}}
\label{subsec:step1}

This subsection proves the compactness of \texorpdfstring{$R(\bB,\bmu)$}, which plays a key role in the proof of existence of Nash equilibrium in the pathwise problem.

\begin{proposition}\label{prop:compact}
Fix indices $(\beta,\beta')\in \Pi$, $m\geq 4$. For any $\lambda\in \cP_2(\mR^d)$ and $\bmu\in \cL(\bD^{\beta,\beta'}_\bB L^{m,\infty})$, $R(\bB,\bmu)\subset \cP(\Omega)$ is compact in the topology of weak convergence.
\end{proposition}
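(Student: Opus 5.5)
Since $\cP(\Omega)$ is a metrizable space of measures on a Polish space, the plan is to establish relative compactness via Prokhorov's theorem and then closedness of $R(\bB,\bmu)$ under weak convergence.

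\emph{Tightness.} Tightness is checked coordinate by coordinate. The $\cQ$-marginal is automatically tight, because $U$ is compact and hence $\cQ$ is compact. The $\cW$-marginal is the Wiener measure for every $\mP\in R(\bB,\bmu)$ (Remark \ref{martignale-condition-special-case}). For the $\cX$-marginal, $(X_0)_\#\mP=\lambda$ is fixed. It then suffices to obtain a moment bound of Kolmogorov type, $\mE^{\mP}|X_t-X_s|^{m}\le C|t-s|^{m\beta}$ with $C$ uniform over $R(\bB,\bmu)$, and use $m\beta>1$, which should hold since $m\ge 4$ and $\beta>1/(1+\gamma)\ge 1/3$. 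To get this bound, I would invoke the equivalence between the rough martingale problem and weak solutions of the RSDE (Appendix \ref{app:rough-martingale}). Each $\mP\in R(\bB,\bmu)$ is then the law of a weak solution $dX=\bar b\,dt+\sigma\,dW+(\tilde\sigma^0,\tilde\sigma')(X)\,d\bB$, where $(\tilde\sigma^0,\tilde\sigma')$ is the controlled vector field from Lemma \ref{lemma:construct_cvf}. The a priori estimate (Proposition 4.5 of \cite{RSDE}, as in \eqref{X-rough-path-norm}) then bounds $\|\delta X\|_{\beta;m,\infty}$ by a constant depending only on $M_{\bmu}$ and $\|\bB\|_\alpha$. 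This constant is uniform in $\mP$ because $b$ and $\sigma$ are bounded and Lipschitz uniformly in $u$. This is essentially Lemma \ref{lemma:tight:1}.

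\emph{Closedness.} Let $\mP^k\in R(\bB,\bmu)$ with $\mP^k\to\mP$. Condition 1 passes to the limit directly, since independence of $X_0$ and $(\Lambda,W)$ is characterized by products of bounded continuous test functions. Condition 2 requires $(X,\hat\sigma^0)$ and $(\hat\sigma^0,\hat\sigma')$ to lie in $\bD^{\beta,\beta'}_\bB L^{m,\infty}_{\mP}$. I would not pass these norms to the limit, because conditional $L^{m,\infty}$ norms are not weakly lower semicontinuous with respect to the filtration. Instead I would first verify condition 3 for $\mP$, then use the converse direction of the appendix to realize $\mP$ as the law of a weak RSDE solution. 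Condition 2 then follows as in the proof of Proposition \ref{lemma:nonemptyness:strong}, via Lemma \ref{lemma:isometry} and \eqref{eq:change-the-filtration}.

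\emph{Condition 3, the main obstacle.} I need $\mE^{\mP}[(M_t(\phi)-M_s(\phi))\varphi]=0$ for bounded continuous $\mF_s$-measurable $\varphi$. The Lebesgue terms are continuous functionals on $\Omega$, because $\Lambda$ enters linearly with continuous integrands in $u$, possibly after approximating $f$-type integrands in time. The rough integral, however, is not a continuous functional of the path. My plan is to replace it by the Riemann sums $A^{\cP}_t$, which are continuous functionals of $X$ and $W$, and to use the uniform estimate from the rough stochastic sewing lemma:
\[
\sup_k \mE^{\mP^k}\Big|\int_0^t(\Ti,\Ti')d\bB - A^{\cP}_t\Big|\le C|\cP|^{\epsilon}.
\]
Here $C$ depends only on the uniform bounds of the stochastic controlled rough path norms of $(\Ti(\phi),\Ti'(\phi))$ under $\mP^k$, which are available via the composition remark. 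Since $\mE^{\mP^k}[(A^{\cP}_t-A^{\cP}_s)\varphi]\to\mE^{\mP}[(A^{\cP}_t-A^{\cP}_s)\varphi]$, and the same sewing estimate holds under $\mP$ once the limit is shown to have finite norms, I can interchange the limits in $k$ and $|\cP|\to0$.

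This creates a circularity, since the norm bound under $\mP$ is exactly what condition 2 asserts. I would break it by defining $M(\phi)$ under $\mP$ with $\int(\Ti,\Ti')d\bB$ replaced by the in-probability limit of $A^{\cP}$. That limit exists along a Cauchy argument that uses only uniform moment bounds, and those bounds are inherited by weak convergence for continuous functionals such as $\mE|\delta X_{s,t}|^m$ and Riemann sum differences. I would then identify it with the rough integral after condition 2 is established. Passing the uniform Riemann-sum error estimates to the limit, and using the uniform integrability supplied by $m\ge4$, is the delicate step I expect to carry most of the work.
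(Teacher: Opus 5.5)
Your tightness argument is the paper's (Lemma~\ref{lemma:tight:1}). The closedness argument, however, has a genuine gap at condition~2.

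You propose to get condition~2 by applying the converse direction, Proposition~\ref{prop:equiv}, to the limit $\mP$, and then arguing as in Proposition~\ref{lemma:nonemptyness:strong}. But Proposition~\ref{prop:equiv} and every appendix lemma behind it are proved only for $\mP\in R(\bB,\bmu)$, and they use condition~2 itself as an input, not just condition~3:
\begin{itemize}
\item Lemma~\ref{lemma:T-phi-rough-path-norm} bounds $(\Ti(\phi),\Ti'(\phi))$ through $\|(X,\hat\sigma^0)\|_{\bB;\beta,\beta';m,\infty}$, which is what allows Lemma~4.20 of \cite{RSDE} to be applied.
\item Lemma~\ref{lemma:roughIto:quadratic} requires $(\cA(\phi),\Ti(\phi))$ and $(\Ti(\phi),\Ti'(\phi))$ to be stochastic controlled rough paths.
\item Lemma~\ref{lemma:Mphi:QV} invokes condition~2 for $\psi=\phi^2$ to verify hypothesis \eqref{multiplication:MA:controlled} of Lemma~\ref{lemma:MA}.
\end{itemize}
Moreover, the RSDE solution concept, the a priori estimate (Proposition~4.5 of \cite{RSDE}) and strong uniqueness (Theorem~4.6 of \cite{RSDE}) all presuppose that $(X,\tilde\sigma^0(X))$ is already a stochastic controlled rough path under $\mP$. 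You would need that uniqueness to identify $\mP$ with the law of the strong solution built in Proposition~\ref{lemma:nonemptyness:strong}. Replacing the rough integral by an $L^2$-limit of Riemann sums does give a well-defined $M(\phi)$ that is a $\mP$-martingale; your lower-semicontinuity argument for $\mE|A^{\cP}_t-A^{\cP'}_t|^2$ is fine. But this only relocates the circularity, since no martingale-problem-to-RSDE result is available for that weaker object.

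The missing idea is that the premise behind your detour is false on the canonical space. The conditional bounds do pass to weak limits, by the same lower-semicontinuity mechanism you already use, once you insert an $\F_s$-measurable weight. Since each $\mP^k\in R(\bB,\bmu)$, Corollary~\ref{lemma:tight} gives $\mE^{\mP^k}[|\delta X_{s,t}|^m\,|\,\F_s]\le C(t-s)^{m\beta}$ with $C$ independent of $k$. Equivalently,
\[
\mE^{\mP^k}\big[|\delta X_{s,t}|^m\varphi\big]\le C(t-s)^{m\beta}\,\mE^{\mP^k}[\varphi]
\]
for every bounded continuous $\varphi\ge 0$ depending only on $(X_{\cdot\wedge s},\Lambda|_{[0,s]\times U},W_{\cdot\wedge s})$.
\begin{itemize}
\item Under weak convergence the left side is lower semicontinuous and the right side is continuous, so the inequality holds for $\mP$.
\item Such $\varphi$ are dense in $L^1(\Omega,\F_s,\mP)$, so this yields the conditional bound under $\mP$.
\item The same works for $\hat\sigma^0=\tilde\sigma^0(X)$ and $\hat\sigma'$, which are continuous functions of $X$.
\item For the signed remainder $\mE_s[R_{s,t}]$, test against signed continuous $\varphi$ and use the uniform integrability given by the uniform $L^m$ bounds.
\end{itemize}
This is how the paper proves Lemma~\ref{lemma:pathwise:closedness}, working on a Skorohod space and using Lemma~\ref{lemma:isometry} to pass between natural filtrations. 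With condition~2 in hand, the rough integral under $\mP$ is defined, and condition~3 follows from Lemma~4.20 of \cite{RSDE} or from your Riemann-sum approximation. No appeal to Proposition~\ref{prop:equiv} for the limit measure is needed.
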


For simplicity, let us introduce the following notations: for $\phi\in C_0^\infty(\mR^d)$ (a test function only in $x$), $t\in[0,T]$,
\begin{align}
    &I_t(\phi) = \int_0^t \int_U \mL\phi(s,X_s,\mu_s,u)\Lambda_s(du)ds,\\
    &\cA_t(\phi) = \int_0^t \big(\Ti_s(\phi),\Ti'_s(\phi)\big)d \bB.
\end{align}

To prove Proposition \ref{prop:compact}, we need several auxiliary preparations for the rough martingale problem of $M^X$ (see \eqref{def:Mphiomega1}), which are collected in Appendix \ref{app:rough-martingale}. In addition, the following lemma is needed in the subsequent analysis.

\begin{lemma}\label{lemma:T-phi-rough-path-norm}
    For $\phi\in C_0^\infty(\mR^d)$, define $\|\nabla \phi\|_{C^2}:= \|\nabla \phi\|_\infty+\|\nabla^2\phi\|_\infty+\|\nabla^3\phi\|_\infty$. Then,
    \begin{align}\label{T-phi-rough-path-norm}
        \|(\Ti(\phi),\Ti'(\phi))\|_{\bB;\beta',\beta;m,\infty}\lesssim \|\nabla \phi\|_{C^2}\llb (\tilde\sigma^0,\tilde \sigma')\rrb_{\bB,\beta,\beta'}(1+\|(X,\hat \sigma^0)\|_{\bB,\beta,\beta';m,\infty}^{\gamma}).
    \end{align}
\end{lemma}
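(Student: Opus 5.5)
We read the statement as a composition estimate: as observed in the remark preceding the cost functional, for $\phi$ depending only on $x$,
\[
(\Ti(\phi),\Ti'(\phi))=(g_t,g_t')\circ(X,\hat\sigma^0),\qquad g_t(x)=\nabla\phi(x)\tilde\sigma^0_t(x),\quad g'_t(x)=\nabla\phi(x)\tilde\sigma'_t(x).
\]
The plan has two steps: first show that $(g,g')$ is itself a controlled vector field with $\llb(g,g')\rrb_{\bB;\beta,\beta'}\lesssim\|\nabla\phi\|_{C^2}\llb(\tilde\sigma^0,\tilde\sigma')\rrb_{\bB;\beta,\beta'}$, and then apply the composition lemma for controlled vector fields with stochastic controlled rough paths from \cite{RSDE} (the lemma preceding the RSDE well-posedness, roughly Lemma 3.12 / Proposition 3.13 there). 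That lemma states that if $(f,f')\in\sD^{\beta,\beta'}_\bB C^\gamma_b$ and $(X,X')\in\bD^{\beta,\beta'}_\bB L^{m,\infty}$, then $(f_t(X_t),\nabla f_t(X_t)X'_t+f'_t(X_t))$ is again stochastically controlled, with norm bounded by $\llb(f,f')\rrb$ times a polynomial of degree $\gamma$ in the norm of $(X,X')$. Here we take $X'=\hat\sigma^0$, and $\nabla g_t(X_t)\hat\sigma^0_t+g'_t(X_t)$ is precisely $\Ti'(\phi)$ after expanding the product rule. This expansion should be checked against \eqref{eq:Tip-def}.

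For the first step we check the items of the definition of a controlled vector field, one at a time.

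\textbf{Boundedness.} The bound on $\sup_t\{|g_t|_\gamma+|g'_t|_{\gamma-1}\}$ follows from the product rule in Hölder spaces, since $\nabla\phi\in C^2_b$ and $\gamma\le2$. This accounts for the factor $\|\nabla\phi\|_{C^2}$; the third derivative of $\phi$ is needed for $\nabla g$ to be $(\gamma-1)$-Hölder and for $\nabla^2 g$-type terms in the composition.

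\textbf{Time increments.} We have $\delta g_{s,t}=\nabla\phi\,\delta\tilde\sigma^0_{s,t}$, and similarly for $\delta g'$. Also $\delta\nabla g=\nabla^2\phi\,\delta\tilde\sigma^0+\nabla\phi\,\delta\nabla\tilde\sigma^0$. The first term is of order $|t-s|^\beta\le T^{\beta-\beta'}|t-s|^{\beta'}$.

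\textbf{Remainder.} Since $\phi$ is time-independent, $R^g_{s,t}=\nabla\phi\,R^{\tilde\sigma^0}_{s,t}$. Each of these quantities is bounded by $\|\nabla\phi\|_{C^2}$ times the corresponding seminorm of $(\tilde\sigma^0,\tilde\sigma')$.

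For the second step we invoke the composition estimate with $(f,f')=(g,g')$ and $(X,X')=(X,\hat\sigma^0)$; by condition 2 of Definition \ref{def:pathwise-relax-control}, the latter lies in $\bD^{\beta,\beta'}_\bB L^{m,\infty}_\mP$. The $(\gamma-1)$-Hölder regularity of $\nabla g$ produces the power $\gamma$ via $\|\delta X\|^{\gamma-1}\cdot\|\delta X\|$-type terms in the remainder. We must also check that the regularity indices in \eqref{T-phi-rough-path-norm} match those delivered by the lemma, reconciling the $(\beta',\beta)$ ordering written there with the composition output. If needed, we use $\beta'\le\beta$ together with the trivial embedding of Hölder seminorms on $[0,T]$, which costs only a $T$-dependent constant.

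\textbf{Main obstacle.} The step we expect to be hardest is verifying that the cited composition lemma applies with an $(m,\infty)$ integrability pair and yields exactly the $1+\|\cdot\|^\gamma$ dependence. This requires estimating the conditional remainder $\mE_s[R^{g(X)}_{s,t}]$. We would do so by a Taylor expansion
\[
g_t(X_t)-g_s(X_s)=R^g_{s,t}(X_t)+g'_s(X_t)\delta B_{s,t}+\big(g_s(X_t)-g_s(X_s)\big),
\]
then expanding $g_s(X_t)-g_s(X_s)=\nabla g_s(X_s)\delta X_{s,t}+O(|\delta X_{s,t}|^\gamma)$ and replacing $\delta X_{s,t}$ by $\hat\sigma^0_s\delta B_{s,t}+R^X_{s,t}$. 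The term $\nabla g_s(X_s)R^X_{s,t}$ is handled by conditioning, since $\nabla g_s(X_s)$ is $\F_s$-measurable. The term $g'_s(X_t)-g'_s(X_s)$ is controlled by $\|\delta X\|^{\gamma-1}$ via Jensen, using $m\ge2$. If the cited lemma is stated only for $(m,n)$ with $n<\infty$, we redo this computation directly along these lines.
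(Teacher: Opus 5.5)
Your proposal matches the paper's proof. Like the paper, you write $(\Ti(\phi),\Ti'(\phi))=(g,g')\circ(X,\hat\sigma^0)$ with $g_t=\nabla\phi\,\tilde\sigma^0_t$ and $g'_t=\nabla\phi\,\tilde\sigma'_t$, bound $\llb(g,g')\rrb_{\bB;\beta,\beta'}\lesssim\|\nabla\phi\|_{C^2}\llb(\tilde\sigma^0,\tilde\sigma')\rrb_{\bB;\beta,\beta'}$, and conclude with the composition lemma of \cite{RSDE} (Lemma 3.11 there); the product-rule identity $\nabla g_t(X_t)\hat\sigma^0_t+g'_t(X_t)=\Ti'_t(\phi)$ that you defer does hold, by $\hat\sigma'_t=\nabla\tilde\sigma^0_t\tilde\sigma^0_t+\tilde\sigma'_t$. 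One small caveat: your embedding fallback cannot make $\Ti'(\phi)$ $\beta$-H\"older, since $\tilde\sigma'$ is only $\beta'$-H\"older in time, so the $(\beta',\beta)$ in the statement has to be read as a typo for $(\beta,\beta')$, which is also how the paper uses it.
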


\begin{proof}
    Fix $\phi$ throughout this proof. Let us consider $g_t(x):=\nabla \phi(x) \tilde \sigma^0_t(x) \in \mR^{k}$ and it follows that
    \begin{align}
        \nabla g_t(x) = \nabla^2\phi(x)(\tilde \sigma^0_t(x),\cdot)+\nabla \phi(x)\nabla \tilde \sigma^0_t(x) \in \mR^{ k\times d}.
    \end{align}
    Note that
    \begin{align}
        \Ti'_t(\phi)=&\nabla^2\phi(X_t)(\tilde \sigma^0_t(X_t),\tilde \sigma^0_t(X_t)) + \nabla\phi(X_t)\hat\sigma'_t\\
        =&\nabla g_t(X_t)\tilde\sigma^0_t(X_t) + \nabla\phi(X_t)\big(\hat \sigma'_t - \nabla\tilde\sigma^0_t(X_t)\tilde \sigma^0_t(X_t)\big)\\
        =&\nabla g_t(X_t)\tilde\sigma^0_t(X_t) + \nabla\phi_t(X_t)\tilde\sigma'_t(X_t).
    \end{align}
    Hence, choosing $g'_t(x) = \nabla \phi(x) \tilde\sigma'_t(x)$, we have $(\Ti(\phi),\Ti'(\phi)) = (g,g')\circ(X,\hat \sigma^0)$\footnote{We emphasize that the composition here is the sense of controlled vector field, i.e., $(f,f')\circ(X,X'):=(f(X),\nabla f(X)X'+f'(X))$.}.
    In view of the fact $\llb (g,g')\rrb_{\bB,\beta,\beta'}\lesssim \|\nabla \phi\|_{C^2}\llb (\tilde\sigma^0,\tilde \sigma')\rrb_{\bB,\beta,\beta'}$, \eqref{T-phi-rough-path-norm} is a consequence of Lemma 3.11 of \cite{RSDE}.
\end{proof}

As an important step toward tightness, we establish the equivalence between the rough martingale problem (i.e., the condition $\mP\in R(\bB,\bmu)$) and the RSDE in the next result, which is important in its own right.

\begin{proposition}\label{prop:equiv}
    For $\mP\in R(\bB,\bmu)$, it holds $\mP$-a.s. that
    \begin{align}\label{RSDE-for-PinR}
        X_t=&X_0+\int_0^t\int_U b(s,X_s,\mu_s,u)\Lambda_s(du)ds+\int_0^t\sigma(s,X_s,\mu_s) dW_s \\
        &+\int_0^t(\tilde\sigma^0,\tilde \sigma')\circ (X,\tilde\sigma^0(X)) d\bB, \quad \forall t\in [0,T].
    \end{align}
\end{proposition}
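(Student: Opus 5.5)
We follow the classical route from martingale problems to weak solutions (Stroock--Varadhan), but with the rough integral handled separately. The plan is to show that the continuous process
\begin{align}
N_t := X_t - X_0 - \int_0^t\int_U b(s,X_s,\mu_s,u)\Lambda_s(du)ds - \int_0^t(\tilde\sigma^0,\tilde\sigma')\circ(X,\tilde\sigma^0(X))\,d\bB
\end{align}
is a continuous $(\mP,\mF)$-local martingale. We then identify its covariation with $W$ and its quadratic variation, and conclude $N_t=\int_0^t\sigma(s,X_s,\mu_s)dW_s$.

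By condition 2 of Definition \ref{def:pathwise-relax-control}, $(X,\hat\sigma^0)\in\bD^{\beta,\beta'}_\bB L^{m,\infty}_\mP$ and $(\tilde\sigma^0,\tilde\sigma')$ is a controlled vector field by Lemma \ref{lemma:construct_cvf}. Hence the rough integral in the display is well defined, and it is also a Riemann-sum limit in probability. Applying the martingale property of $M_t(\phi)$ to the coordinate functions $\phi(x,w)=x^i$, localized by stopping times $\tau_R=\inf\{t:|X_t|+|W_t|\ge R\}$ and test functions equal to $x^i$ on the ball of radius $R$, gives $\Ti(\phi)=\tilde\sigma^{0,i}(X)$ and $\Ti'(\phi)=\hat\sigma'^{,i}(X)$ there, because $\nabla^2\phi=0$ on the ball. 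The localized $M(\phi)$ is exactly $N^i_{\cdot\wedge\tau_R}$, up to the trace term $\Tr(\sigma^0(\sigma^0)^\mt\nabla^2\phi)$, which vanishes for linear $\phi$. So $N$ is a continuous local martingale.

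To identify the brackets, we use $\phi=x^ix^j$ and $\phi=x^iw^j$ with the same localization. We compare $M(x^ix^j)$ with the product $(X^i-X^i_0)(X^j-X^j_0)$ expanded through the decomposition $X=X_0+\text{(drift)}+N+\text{(rough integral)}$. This step needs a product rule for the sum of a semimartingale part and a rough-integral part. We would use the rough It\^o formula (Theorem 4.13 of \cite{RSDE}) in the form valid for processes of the form ``semimartingale + rough integral of a stochastic controlled rough path''. Its output is the product expanded with rough-integral terms whose Gubinelli derivative contains $\tilde\sigma^0\otimes\tilde\sigma^0$, a bracket correction $\int\sigma^0(\sigma^0)^\mt ds$ from $[\bB]_{s,t}=(t-s)I$, and $d\langle N^i,N^j\rangle$. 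Matching this with $M(x^ix^j)$, which is a local martingale, leaves $\langle N^i,N^j\rangle_t-\int_0^t(\sigma\sigma^\mt)^{ij}(s,X_s,\mu_s)ds$ as a continuous local martingale of finite variation. That quantity is therefore zero. The same argument gives $\langle N^i,W^j\rangle_t=\int_0^t\sigma^{ij}ds$, and Remark \ref{martignale-condition-special-case} gives $\langle W\rangle_t=tI$.

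It follows that $\big\langle N-\int\sigma\,dW\big\rangle=\int\sigma\sigma^\mt-2\int\sigma\sigma^\mt+\int\sigma\sigma^\mt=0$. Hence $N=\int_0^\cdot\sigma(s,X_s,\mu_s)dW_s$ $\mP$-a.s., with both sides continuous, which yields \eqref{RSDE-for-PinR} for all $t$ simultaneously.

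The main obstacle is the product-rule step. We must justify an It\^o formula for $X$ \emph{before} knowing that $X$ solves an RSDE; we only know that $X$ is the sum of a continuous local martingale, a bounded-variation term and a rough stochastic integral. Our first attempt would avoid the full formula: compute $\delta(X^iX^j)_{u,v}$ over a partition, insert the local decomposition $\delta X_{u,v}=\hat\sigma^0_u\delta B_{u,v}+\hat\sigma'_u\mB_{u,v}+\delta N_{u,v}+O(|v-u|)+\text{remainder}$, and pass to the limit. The limit uses the $L^{m,\infty}$ remainder bounds (this is where $m\ge4$ enters), the stochastic sewing lemma for the mixed terms $\delta N\,\delta B$, which give conditional-mean-zero contributions, and $\mB+\mB^\mt=\delta B\otimes\delta B-(v-u)I$ to produce the bracket correction. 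These auxiliary facts are what we expect the appendix on rough martingale problems to supply.
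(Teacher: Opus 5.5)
Your proposal is correct and follows essentially the paper's route. The martingale problem with test functions that are linear in $x$ yields the drift and rough-integral parts. The product rules for rough integrals that the appendix supplies (Lemmas \ref{lemma:roughIto:quadratic}, \ref{lemma:IA-multiplication}, \ref{lemma:MA}) then identify $\langle N\rangle=\int\sigma\sigma^\mt ds$ and $\langle N,W\rangle=\int\sigma\, ds$, and these two brackets force $N=\int\sigma\,dW$.

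The only real difference is how you handle unbounded test functions. You stop at $\tau_R$ and use locality of the rough integral, which does follow from its Riemann-sum definition. The paper never leaves compactly supported test functions: it computes $\langle M^X(\phi)\rangle$ via $\psi=\phi^2$ (Lemma \ref{lemma:Mphi:QV}) and lets $\phi_n\to x$, using the uniform bound of Lemma \ref{lemma:T-phi-rough-path-norm}, Lemma 4.20 of \cite{RSDE}, and completeness of $(\M^{2,c}_T,\rho_\M)$. This keeps every integrand bounded and in $L^{m,\infty}$. Your direct expansion of $X^iX^j$ would instead have to handle unbounded products such as $X^i\hat\sigma^{0,j}$ in some $L^{m,n}$ with $n<\infty$.
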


\begin{proof}
     Choose a vector-valued test function $\vec \phi\in C_0^\infty(\mR^d;\mR^d)$. Define $M^X(\vec\phi)$, $I(\vec\phi)$ and $\cA(\vec\phi)$ componentwise, so that
     \begin{align}\label{eq:RSDE:phi}
         \vec\phi(X_t)=\vec\phi(X_0)+I_t(\vec\phi)+\cA_t(\vec\phi)+M^X_t(\vec\phi).
     \end{align}
     Utilizing the cut-off function, we obtain the existence of a sequence of $\vec\phi_n\in C_0^\infty(\mR^d;\mR^d)$ satisfying:
    \begin{enumerate}
        \item $\vec\phi_n(x)\to x$, $\nabla\vec\phi_n(x)\to I_{d\times d}$, $\nabla^2\vec\phi_n(x)\to 0$, pointwise;
        \item $\{(\nabla\vec\phi_n,\nabla^2\vec\phi_n)\}_{n\geq 1}$ is uniformly bounded on $\mR^d$ (in suitable operator norms for matrices and tensors).
    \end{enumerate}
   By virtue of the boundedness of coefficients, we get the convergence of $I(\vec\phi_n)$:
    \begin{align}
        \sup_{t\in [0,T]}\bigg|\int_0^t \int_U b(s,X_s,\mu_s,u)\Lambda_s(du)ds - I_t(\vec\phi_n)\bigg|\to 0, \quad {\rm in\ \ } L^2(\Omega;\mP).
    \end{align}
    As a direct result of Lemma \ref{T-phi-rough-path-norm}, $\|(\Ti(\phi),\Ti'(\phi))\|_{\bB;\beta',\beta;m,\infty}$ is uniformly bounded, and then Lemma 4.20 of \cite{RSDE} yields that
    \begin{align}
        \sup_{t\in [0,T]}\bigg|\int_0^t(\tilde\sigma^0,\tilde \sigma')\circ (X,\tilde\sigma^0(X)) d\bB - \cA_t(\vec\phi_n)\bigg|\to 0, \quad {\rm in\ \ } L^2(\Omega;\mP).
    \end{align}
    To handle the martingale term in \eqref{eq:RSDE:phi}, we consider the space $\M^{2,c}_T$ of square-integrable, continuous martingales on $(\Omega,\F_T,\mF,\mP)$ with time index $t\in [0,T]$, equipped with the metric:
    \begin{align}
        \rho_{\M}(M^1,M^2):= (\mE^\mP[|M^1_T-M^2_T|^2])^{1/2}=(\mE^\mP[\langle M^1-M^2\rangle_T])^{1/2}.
    \end{align}
    By Proposition 1.5.23 of \cite{karatzas1991brownian}, $(\M^{2,c}_T,\rho_\M)$ is complete. Write $\vec\phi=(\phi^1,\phi^2,\cdots, \phi^d)$. Then, for each $j=1,2,\cdots ,d$, $n\geq 1$, it is clear that $M^X(\phi^j_n)\in \M^{2,c}_T$. From Lemma \ref{lemma:Mphi:QV}, it follows that
    \begin{align}
        \rho_M\big(M^X(\phi^j_n),M^X(\phi^j_m)\big)^2=\mE^\mP\bigg[\int_0^T \big|\big(\nabla \phi^j_n(X_s)-\nabla \phi^j_m(X_s)\big)\sigma(s,X_s,\mu_s)|^2 ds\bigg] \to 0,\quad m,n\to \infty.
    \end{align}
    Hence there exists an $M^j\in \M^{2,c}_T$ such that $\rho_\M(M^j,M^X(\phi^j_n))\to 0$. Furthermore, the bi-linearity of $\langle \cdot,\cdot\rangle$ implies that
    \begin{align}\label{Mij}
        \langle M^i,M^j\rangle = \langle M^i-M^X(\phi^i_n),M^j\rangle + \langle M^X(\phi^i_n),M^j-M^X(\phi^j_n)\rangle + \langle M^X(\phi^i_n),M^X(\phi^j_n)\rangle.
    \end{align}
    By virtue of Remark \ref{rmk:cross-variation} and the Cauchy-Schwarz inequality, we get that
    \begin{align}\label{M-bracket}
        \langle M\rangle_t=\int_0^t \sigma\sigma^\mt(s,X_s,\mu_s) ds, \quad \forall t\in [0,T],
    \end{align}
    holds $\mP$-almost surely, in which $M=(M^1,M^2,\cdots, M^d)$. Here, to simplify the notation we define $\langle M\rangle$ for a $\mR^d$-valued martingale $M$ to be the $d\times d$ matrix, whose $(i,j)$-element is $\langle M^i,M^j\rangle$. Letting $n\to \infty$ in \eqref{eq:RSDE:phi} and taking the limit in $L^2(\Omega;\mP)$, one obtains
    \begin{align}\label{eq:RSDE:M}
        X_t = X_0 +\int_0^t \int_U b(s,X_s,\mu_s,u)\Lambda_s(du)ds+\int_0^t(\tilde\sigma^0,\tilde \sigma')\circ (X,\tilde\sigma^0(X)) d\bB + M_t
    \end{align}

    On the other hand, for $j=1,2,\cdots, l$, using the cut-off functions again yields a sequence $\psi^n_j\in C_0^\infty(\mR^l)$ with uniformly bounded gradients and Hessians, and $\psi^j_n(w)\to w^j$, $n\to \infty$. Using Lemma \ref{lemma:cross-M-W} and letting $n\to \infty$ in $\langle M^X(\phi^i_n),M^W(\psi^j_n)\rangle$ for $i=1,2,\cdots, d$, $j=1,2,\cdots, l$, we get from a similar relation as in \eqref{Mij} that
    \begin{align}\label{MW-bracket}
        \langle M^i,W^j\rangle_t=\int_0^t \sigma_{ij}(s,X_s,\mu_s) ds.
    \end{align}
    Combining
    \eqref{M-bracket} and \eqref{MW-bracket} gives
    \begin{align}
        M_t=\int_0^t \sigma(s,X_s,\mu_s) dW_s.
    \end{align}
    Plugging this back into \eqref{eq:RSDE:M} completes the proof.
\end{proof}

\begin{remark}\label{remark:uniqueness}
    The equivalence result in Proposition \ref{prop:equiv} is used several times in this paper. First, it directly gives tightness in Lemma \ref{lemma:tight:1} (see also Corollary \ref{lemma:tight} below). Second, it is crucial to the proof of Proposition \ref{lemma:R:uniqueness} and Lemma \ref{lemma:Phi:continuity}, which is a key step toward the existence of pathwise MFE. Moreover, Proposition \ref{prop:equiv} justifies our martingale formulation in Definition \ref{def:pathwise-relax-control}. Indeed, for any $\mP\in R(\bB,\bmu)$, we can always find a state process in the form of an RSDE, with the same control-noise coupling on $(\Lambda,W)$, that produces a state process with distribution $(X)_\#\mP$. After randomization (see Section \ref{sec:randomization}), this leads to the conventional MFG problem.
\end{remark}

As a direct result, the following a priori estimate holds for all $\mP\in R(\bB,\bmu)$.

\begin{corollary}\label{lemma:tight}
    There exists a $\gamma>0$ such that, for each $\mP\in R(\bB,\bmu)$ and $s<t$,
    \begin{align}\label{eq:aprioriest}
       \|(X,\hat\sigma^0)\|_{\bB;\alpha,\beta;m,\infty;[s,t];\mP}\lesssim (1\vee\llbracket(\tilde \sigma^0,\tilde \sigma')\rrbracket_{\bB;\beta,\beta';[s,t]})^{\gamma}.
    \end{align}
\end{corollary}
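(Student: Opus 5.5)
The plan is to reduce the statement to the a priori estimate for RSDEs of \cite{RSDE} (Proposition 4.5 there), using Proposition \ref{prop:equiv} to realise any $\mP\in R(\bB,\bmu)$ as a solution of an RSDE on the canonical space. By Proposition \ref{prop:equiv}, under $\mP$ the coordinate process $X$ solves \eqref{RSDE-for-PinR}, with the random drift $\bar b(t,x,\omega)=\int_U b(t,x,\mu_t,u)\Lambda_t(\omega)(du)$, diffusion $\sigma(t,x,\mu_t)$ driven by the $(\mP,\mF)$-Brownian motion $W$ (Remark \ref{martignale-condition-special-case}), and the rough term driven by the controlled vector field $(\tilde\sigma^0,\tilde\sigma')$ from Lemma \ref{lemma:construct_cvf}. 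By Assumption \ref{ass:}, all bounds and Lipschitz constants of $\bar b$ and $\sigma$ are uniform in the control and in $\omega$. So $\bar b$ and $\sigma$ are random bounded Lipschitz coefficients in the sense of Definition 4.1 of \cite{RSDE}, with constants independent of $\mP$. Condition 2 of Definition \ref{def:pathwise-relax-control} guarantees a priori that $(X,\hat\sigma^0)\in\bD^{\beta,\beta'}_\bB L^{m,\infty}_\mP$, so $X$ is an $L^{m,\infty}$-integrable solution to which the a priori estimate applies.

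The key step is to apply the a priori bound of \cite{RSDE} on the interval $[s,t]$, restarting the equation at time $s$. This bound controls $\|\delta X\|_{\alpha;m,\infty}$ (the drift and Itô parts give H\"older regularity $1/2\ge\alpha$), $\|\hat\sigma^0\|$ and the conditional remainder $\mE_\cdot[R^X]$. It does so by a power of $1\vee\llb(\tilde\sigma^0,\tilde\sigma')\rrb_{\bB;\beta,\beta';[s,t]}$ times a power of $1+\|\bB\|_\alpha$, and $\|\bB\|_\alpha$ is fixed and therefore absorbed into the constant. I expect this bound to follow the shape of \eqref{X-rough-path-norm}, which is of the form $M(1+M\|\bB\|_\alpha)^{1/\beta'}$. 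The exponent $\gamma$ in the statement (a new constant, unrelated to the spatial regularity) can then be taken to be, for instance, $1+1/\beta'$. The important point is that the implicit constant depends only on the bounds in Assumption \ref{ass:}, on $T$, $m$, $\|\bB\|_\alpha$ and the indices, and not on $\mP$ or on the initial law. This uniformity is what makes the corollary useful for tightness.

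The main obstacle is that the estimate in \cite{RSDE} is stated for a strong solution on a filtered space satisfying the usual conditions, with norms taken relative to that filtration. Here we work with the (augmented) canonical filtration $\mF$, under which $W$ is Brownian and the RSDE \eqref{RSDE-for-PinR} holds. Since the conditional norms are computed with respect to the same filtration that carries the equation, the estimate transfers directly. This avoids the filtration-shrinking issue of Lemma \ref{lemma:isometry}, and the augmentation remark ensures that completion does not change the $L^{m,\infty}$ norms.

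A second point to check is that the a priori estimate is genuinely local, on $[s,t]$ with the controlled-vector-field norm restricted to $[s,t]$. If \cite{RSDE} only gives a global statement, I would rederive it by repeating its rough Davie-type/sewing argument on subintervals of size $h$ with $h^{\beta-\beta'}$ small relative to $\llb(\tilde\sigma^0,\tilde\sigma')\rrb$, and then patch the pieces. The number of pieces grows polynomially in that norm, which produces the power $\gamma$.
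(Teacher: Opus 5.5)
Your proposal follows the paper's route: Proposition \ref{prop:equiv} lets you view any $\mP\in R(\bB,\bmu)$ as an $L^{m,\infty}$-integrable solution of the RSDE on the canonical filtered space. The a priori estimate of Proposition 4.5 in \cite{RSDE}, applied on subintervals $[s,t]$, then gives a bound uniform in $\mP$; the paper simply asserts this localisation holds with constants non-decreasing in $t-s$, and your remarks on the filtration and on patching subintervals spell out what that one-line argument leaves implicit.
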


\begin{proof}
    This is a corollary of Proposition 4.5 in \cite{RSDE} and Proposition \ref{prop:equiv} above. It is worth noting that their estimates naturally hold for any subinterval $[s,t]\subset [0,T]$, with hidden constants non-decreasing in $t-s$.
\end{proof}

We are now ready to present the proof of Proposition \ref{prop:compact}. For clarity, we split it into two sub-results: the tightness and closedness.

\begin{lemma}\label{lemma:tight:1}
    $R(\bB,\bmu)\subset \cP(\Omega)$ is tight.
\end{lemma}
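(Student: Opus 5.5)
Since $\Omega=\cX\times\cQ\times\cW$ carries the product topology, tightness of $R(\bB,\bmu)$ reduces to tightness of each family of marginals, and I will treat the three coordinates separately.

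The two easy coordinates come first. The space $\cQ$ is compact, because $U$ is compact and $\cQ$ is a closed subset of the measures on $[0,T]\times U$ with total mass $T$ under the weak topology. So every family of laws of $\Lambda$ is automatically tight. For the coordinate $W$, Remark \ref{martignale-condition-special-case} shows that under every $\mP\in R(\bB,\bmu)$ the process $W$ is an $\mF$-Brownian motion. Hence $W_\#\mP$ is the Wiener measure for every $\mP$, and a singleton is tight.

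The substantive part is the marginal $X_\#\mP$ on $\cX=C([0,T];\mR^d)$. Here I use Kolmogorov--Chentsov together with the tightness criterion for continuous processes. By condition 1 of Definition \ref{def:pathwise-relax-control}, the law of $X_0$ is always $\lambda$, so the initial laws form a tight family. It then remains to find a uniform moment bound of the form
\[
\sup_{\mP\in R(\bB,\bmu)}\mE^{\mP}\big[|X_t-X_s|^m\big]\le C|t-s|^{m\beta},
\]
with $m\beta>1$.

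To get this bound, I invoke Proposition \ref{prop:equiv}: every $\mP$ makes $X$ a solution of the RSDE \eqref{RSDE-for-PinR}, driven by the fixed controlled vector field $(\tilde\sigma^0,\tilde\sigma')$, and this vector field depends only on $\bmu$, not on $\mP$. Corollary \ref{lemma:tight} then gives $\|(X,\hat\sigma^0)\|_{\bB;\beta,\beta';m,\infty;\mP}\le C$ with $C$ independent of $\mP$. In particular $\|\delta X\|_{\beta;m,\infty}\le C$, and taking expectations of the conditional moments yields $\mE^{\mP}|X_t-X_s|^m\le C^m|t-s|^{m\beta}$.

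The main obstacle I anticipate is the exponent condition $m\beta>1$. We only know $\beta>1/(1+\gamma)\ge 1/3$, and $m\ge 4$ gives $m\beta>4/3>1$, which suffices. This is where the assumption $m\ge4$ in Proposition \ref{prop:compact} enters. If I needed more room, I could alternatively bound the $dt$ and $dW$ parts by BDG with bounded coefficients, which holds for all moments. The rough integral part, however, is controlled only through the RSDE norm, so the argument relies on $m\beta>1$.

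Given the uniform bound, Kolmogorov's criterion yields a uniform bound on the expected $\beta''$-Hölder norm for some $\beta''>0$. Arzelà--Ascoli then shows that $\{X_\#\mP\}$ is tight. Since each of the three marginal families is tight, $R(\bB,\bmu)$ is tight.
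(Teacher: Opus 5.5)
Your proposal is correct and follows essentially the same route as the paper. Both arguments rest on four points:
\begin{itemize}
\item a $\mP$-uniform bound on $\|\delta X\|_{\beta;m,\infty}$, taken from Corollary~\ref{lemma:tight} via Proposition~\ref{prop:equiv};
\item Kolmogorov's criterion, using $m\beta\ge 4\beta>1$;
\item compactness of $\cQ$ and the fixed Wiener marginal of $W$;
\item combining tightness of the marginals to get tightness of $R(\bB,\bmu)$.
\end{itemize}
You also state explicitly that the initial law is fixed at $\lambda$, which the paper leaves implicit.
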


\begin{proof}
 As a direct consequence of \eqref{eq:aprioriest}, we get a bound on $\|\delta X\|_{\beta;m,\infty}$ that is independent of $\mP$. In particular, for $s<t$,
\begin{align}\label{eq:aldous}
    \mE^{\mP}[|X_t-X_s|^m] \leq \|\delta X\|_{\beta;m,\infty}^m (t-s)^{m\beta},
\end{align}
and $m\beta\geq 4\beta>1$. By Aldous' criterion and the Kolmogorov continuity theorem (e.g., Theorem 3.1 of \cite{rough_path_book}), it follows that $\{X_\#\mP:\mP\in R(\bB,\bmu)\}$ is tight. Then, since $U$ is compact by assumption, so is $\cQ\subset \cP_T([0,T]\times U)$, and consequently $\{\Lambda_\#\mP:\mP\in R(\bB,\bmu)\}$ is tight. Since the $W$-marginal of $\mP$ is fixed, it is standard to combine tightness of the marginals to obtain tightness of $R(\bB,\bmu)$ (e.g., Proposition 2.4 of \cite{EthierKurtz1986Markov}).
\end{proof}

\begin{remark}
    Although $\|(X,\hat\sigma^0)\|_{\bB;\beta,\beta';m,\infty}<\infty$ (for {\it each fixed} $\mP\in R(\bB,\bmu)$) is part of the definition of $R(\bB,\bmu)$, Corollary \ref{lemma:tight} provides a uniform-in-$\mP$ bound, which is central to our analysis. By Lemma \ref{lemma:construct_cvf} and Lemma 3.11 of \cite{RSDE}, it is clear that $\|(\hat\sigma^0,\hat\sigma')\|_{\bB;\beta,\beta';m,\infty}$ also has a uniform-in-$\mP$ bound.
\end{remark}

\begin{lemma}\label{lemma:pathwise:closedness}
    $R(\bB,\bmu)\subset \cP(\Omega)$ is closed under the topology of weak convergence.
\end{lemma}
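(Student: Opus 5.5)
We take a sequence $\mP^k\in R(\bB,\bmu)$ with $\mP^k\Rightarrow\mP$ weakly on $\Omega$ and verify the three conditions of Definition \ref{def:pathwise-relax-control} for $\mP$ one at a time.

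\textbf{Condition 1.} This passes to the limit directly. The marginal $(X_0)_\#\mP^k=\lambda$ is preserved under weak convergence. Independence of $X_0$ and $(\Lambda,W)$ is characterised by $\mE[h(X_0)k(\Lambda,W)]=\mE[h(X_0)]\mE[k(\Lambda,W)]$ for bounded continuous $h,k$, and this identity is closed under weak limits.

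\textbf{Condition 2.} The vector field $(\tilde\sigma^0,\tilde\sigma')$ depends only on $\bmu$, so it is the same for all $k$. Corollary \ref{lemma:tight} and the subsequent remark give uniform-in-$k$ bounds on $\|(X,\hat\sigma^0)\|_{\bB;\beta,\beta';m,\infty;\mP^k}$ and $\|(\hat\sigma^0,\hat\sigma')\|_{\bB;\beta,\beta';m,\infty;\mP^k}$. The task is to transfer these bounds to $\mP$.

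Each quantity has the form $\|\mE^{\mP}[|\delta U_{s,t}|^m\,|\,\F_s]^{1/m}\|_\infty$, possibly with a conditional remainder. We use the duality
\[\|\mE[|\xi|^m|\F_s]\|_\infty=\sup\{\mE[|\xi|^m\eta]:\ \eta\ge0\ \F_s\text{-measurable},\ \mE\eta=1\}.\]
It suffices to take $\eta=\varphi(X_{\cdot\wedge s},\Lambda|_{[0,s]},W_{\cdot\wedge s})$ with $\varphi$ bounded, continuous and nonnegative, by density in $L^1(\mP)$. For such $\eta$, the functional $\mE^{\mP^k}[(|\delta U_{s,t}|^m\wedge N)\,\eta]$ converges to its $\mP$-counterpart, because $U$ is a continuous function of $(t,X_t)$. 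Letting $N\to\infty$ by monotone convergence yields lower semicontinuity of these norms.

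For the remainder terms $\mE_s[R_{s,t}]$, which are vector-valued and not positive, we use the dual form $\sup\mE[R_{s,t}\cdot\eta]$ with $\eta$ bounded and $\F_s$-measurable, $\|\eta\|_{L^1}\le1$. Uniform integrability, which follows from the $m$-th moment bounds, allows passing to the limit. Hence the limit norms are bounded by the uniform constants. Continuity of $t\mapsto U_t$ in $L^m$ follows from the Hölder bounds.

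\textbf{Condition 3.} This is the main obstacle: the rough integral $\cA_t(\phi)$ is not a continuous functional of the path, so we cannot pass to the limit in $\mE^{\mP^k}[(M_t-M_s)\varphi]=0$ directly.

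We replace $\cA_t(\phi)$ by the compensated Riemann sums $A^{\cP}_t$ from the proof of Proposition \ref{lemma:nonemptyness:strong}. These are continuous bounded functionals of $X$, since $\phi$ has compact support and $\tilde\sigma^0,\hat\sigma'$ are continuous in $x$. By the rough stochastic sewing estimate (Theorems 2.8 and 3.5 / Lemma 4.20 of \cite{RSDE}), combined with Lemma \ref{lemma:T-phi-rough-path-norm} and the uniform bounds above, we have
\[\sup_k\mE^{\mP^k}|A^{\cP}_t-\cA_t(\phi)|\le C|\cP|^{\epsilon}\]
for some $\epsilon>0$, and the same bound holds under $\mP$ since $\mP$ satisfies Condition 2.

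The other terms of $M_t(\phi)$ are continuous in $(X,\Lambda,W)$. The Lebesgue integral against $\Lambda$ uses the standard continuity of $q\mapsto\int\!\!\int h(s,u)q(ds,du)$ for $h$ continuous in $u$; for the time dependence of $b$ we either approximate or use that $b$ is Lipschitz in $x$ and $\mu_s$ is fixed.

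With these ingredients we argue by a three-epsilon estimate. For continuous bounded $\F_s$-measurable $\varphi$, the relation $\mE^{\mP^k}[(M^{\cP}_t-M^{\cP}_s)\varphi]=O(|\cP|^\epsilon)$ passes to $\mP$, where $M^{\cP}$ denotes $M(\phi)$ with $\cA(\phi)$ replaced by $A^{\cP}$. Sending $|\cP|\to0$ gives the martingale property under $\mP$.

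Combining closedness with the tightness from Lemma \ref{lemma:tight:1} then yields Proposition \ref{prop:compact}.
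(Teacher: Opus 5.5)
Your proposal is correct. It follows the paper for Condition 2 and takes a genuinely different route for Condition 3.

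\textbf{Condition 2.} The paper first applies the Skorohod representation on a common space $\Omega'$. It transfers the uniform bound of Corollary \ref{lemma:tight} to the filtrations $\mF^n$ generated by $(X^n,\Lambda^n,W^n)$ via Lemma \ref{lemma:isometry}, and then obtains Condition 2 by a Fatou-type argument against nonnegative continuous functions of the stopped path. That is the same lower-semicontinuity mechanism you use, except that you stay on $\Omega$.

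\textbf{Condition 3.} The paper identifies the law of $M^n_t(\phi)$ through Riemann sums. It then invokes the stability result Lemma 4.20 of \cite{RSDE} to get $\sup_t|M^n_t(\phi)-M'_t(\phi)|\to 0$ in $L^m(\Omega')$ and passes to the limit in the martingale identity. You instead use three ingredients:
\begin{enumerate}
\item the stochastic-sewing rate for the Riemann sums $A^{\cP}_t$, which is uniform in $k$ because the constants depend only on the uniformly bounded controlled-path norms of $(\Ti(\phi),\Ti'(\phi))$;
\item the fact that $M^{\cP}_t-M^{\cP}_s$ is a bounded continuous functional on $\Omega$;
\item an $\epsilon/3$ argument.
\end{enumerate}

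\textbf{What your route buys.} Each rough integral is only ever computed under its own measure and its own canonical filtration. This avoids a point the paper passes over: on $\Omega'$, the integrands built from $X^n$ and from $X'$ are controlled with respect to different filtrations $\mF^n$ and $\mF'$. A stability lemma for rough stochastic integrals would normally require a common filtration. You also treat the remainder $\mE_s[R_{s,t}]$ explicitly through $L^1$--$L^\infty$ duality with signed test functions, whereas the paper only says the argument ``can be repeated''.

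\textbf{What the paper's route buys.} It gives a stronger conclusion, namely $L^m$ convergence of the whole martingale uniformly in time. The paper reuses this verbatim in Lemma \ref{lemma:Phi:continuity}, where $\bmu^n$ also varies. Your approach extends to that setting too, using the uniform bounds over $\cP_{M,\epsilon}$ and pointwise convergence of the Riemann sums in $\bmu^n$.

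\textbf{Two minor remarks.}
\begin{enumerate}
\item Since $|\delta U_{s,t}|^m$ is unbounded, the reduction to continuous $\eta$ is not quite ``density in $L^1$''. It is better justified in one of two ways. Either approximate a bounded $\F_s$-measurable $\eta$ by uniformly bounded continuous $\eta_j\to\eta$ almost surely and use dominated convergence. Or compare the finite measures $A\mapsto \mE[|\delta U_{s,t}|^m 1_A]$ and $A\mapsto K\,\mP(A)$ on the stopped-path space. This does not affect your conclusion.
\item Both your proof and the paper's tacitly need $b$ to be continuous in $u$, in order to pass to the limit in the $\Lambda$-integral. Assumption \ref{ass:} does not state this explicitly.
\end{enumerate}
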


\begin{proof}
Suppose that $\{\mP^n\}_{n\geq 1}\subset R(\bB,\bmu)$ and $\mP^n\to \mP\in \cP(\Omega)$ weakly. By Skorohod representation theorem, there exist a probability space $(\Omega',\F',\mP')$ and $\Omega$-valued random variables $(X^n,\Lambda^n,W^n)$ and $(X,\Lambda,W)$ such that $\cL^{\mP'}(X^n,\Lambda^n,W^n) = \cL^{\mP^n}(X,\Lambda,W)$, $\cL^{\mP'}(X',\Lambda',W') = \cL^{\mP}(X,\Lambda,W)$,\; and $(X^{n},\Lambda^n,W^n)\to (X',\Lambda',W')$, $\mP'$-a.s.. Note that the convergence of elements on $\Omega$ is interpreted as being under $d_\infty$ in $\cX$ and $\cW$, and weak convergence in $\cQ$. In particular, for any $t\in [0,T]$, $X^n_{\cdot\wedge t}\to X'_{\cdot \wedge t}$, $W^n_{\cdot\wedge t}\to W'_{\cdot \wedge t}$, $\Lambda^n|_{[0,t]\times U}\to \Lambda'|_{[0,t]\times U}$ $\mP'$-a.s., the first two under the uniform distance last one under stable convergence on $\cQ$. We now verify that $\mP$ satisfies Definition \ref{def:pathwise-relax-control}.

First, condition 1 is clearly satisfied. We next verify condition 2. The main difficulty here is that the rough path norms are not law-invariant. It is more helpful to analyze the explicit dependence on probability measures and filtrations. In particular, we use $\|\cdot\|_{\bB;\beta,\beta';m,\infty;\mP,\mF}$ to denote the corresponding stochastic rough path norms for some probability measure $\tilde \mP$ and some filtration $\tilde \mF$. The probability space on which these norms are defined will be clear from the notation. On $\Omega'$, we denote by $\mF'$ the filtration generated by $(X',\Lambda',W')$, and $\mF^n$ the filtration generated by $(X^n,\Lambda^n,W^n)$, both completed by $\mP'$. Using the invariance property established in Lemma \ref{lemma:isometry}, along with the uniform estimate in Corollary~\ref{lemma:tight}, we deduce that, $\mP'$-a.s.,
\begin{align}
   \mE^{\mP'}[|X^n_t-X^n_s|^m|\F^n_s]\leq& \|(X,\hat\sigma^0)\|^m_{\bB;\beta,\beta';m,\infty;\mP^n,\mF^n}(t-s)^{m\beta}\\
    \leq & C(t-s)^{m \beta},
\end{align}
with a constant $C$ that is independent of $n$. On the other hand, it is clear that the family of random variables $\{X^n_t-X^n_s\}_{n\geq 1}$ is uniformly integrable. Thus, for any bounded positive continuous function $\varphi$ defined on $\Omega$ (which only depends on the path up to $s$), we have
\begin{align}
  \mE^{\mP'}[|X'_t-X'_s|^m\varphi(X'_{\cdot\wedge s},\Lambda'|_{[0,s]\times U},W'_{\cdot\wedge s})]\leq &\liminf_{n\to \infty}\mE^{\mP'}[|X^n_t-X^n_s|^m \varphi(X^n_{\cdot\wedge s},\Lambda^n|_{[0,s]\times U},W^n_{\cdot\wedge s})]\\
  =&\liminf_{n\to \infty}\mE^{\mP'}\big[\mE^{\mP'}[|X^n_t-X^n_s|^m |\F^n_s]\varphi(X^n_{\cdot\wedge s},\Lambda^n|_{[0,s]\times U})\big]\\
  \leq&C(t-s)^{m\beta}\liminf_{n\to \infty}\mE^{\mP'}[\varphi(X^n_{\cdot\wedge s},\Lambda^n|_{[0,s]\times U},,W^n_{\cdot\wedge s})]\\
  =&C(t-s)^{m\beta}\mE^{\mP'}[\varphi(X'_{\cdot\wedge s},\Lambda'|_{[0,s]\times U},W'_{\cdot\wedge s})].
\end{align}
Consequently, $\mE^{\mP'}[|X'_t-X'_s|^m|\F'_s]\leq C(t-s)^{m\beta}$, $\mP'$-a.s., thus $\|\delta X'\|_{\beta;m,\infty}\leq C^{1/m}$. Moreover, $\tilde\sigma^0$ and $\hat\sigma'$ are both continuous, the above argument can be repeated to derive similar bounds. We therefore conclude that, $\|(X',\tilde\sigma^0(X'))\|_{\bB;\beta,\beta';m,\infty;\mP',\mF'}$ and $\|(\tilde\sigma^0(X'),\hat\sigma'(X'))\|_{\bB;\beta,\beta';m,\infty;\mP',\mF'}$ also satisfy the estimate derived in Lemma \ref{lemma:tight}. Utilizing the invariance property in Lemma \ref{lemma:isometry} again, condition 2 can be verified.

To show that condition 3 holds, we use an argument similar to the third step in the proof of Proposition \ref{lemma:nonemptyness:strong} to transfer the martingale property from $\Omega$ to $\Omega'$. More precisely, by a similar Riemann-sum approximation, we have
\begin{align}
\cL^{ \mP'}( M^n_t(\phi),X^n,\Lambda^n, W^n)=\cL^{\mP^n}(M_t(\phi),X,\Lambda,W)
\end{align}
for each $t\in [0,T]$. Here $ M^n(\phi)$ is defined similarly as in \eqref{def:Mphiomega1-XW}, with $(X, \Lambda, W)$ replaced by $( X^n, \Lambda^n, W^n)$ (though on a different probability space $(\Omega',\mF',\F',\mP')$). As $\mP^n\in R(\bB,\bmu^n)$, we know $ M^n(\phi)$ is a $(\mP', \mF')$-martingale on $\Omega'$. In particular, for any bounded continuous function $\varphi$ on $\Omega$, only depending on $(X_{\cdot \wedge s},\Lambda|_{[0,s]\times U},W_{\cdot \wedge s})$, we have
\begin{align}\label{martingale:Mn}
    \mE^{\mP'}[M^n_t(\phi)\varphi(X^n_{\cdot \wedge s}, \Lambda^n|_{[0,s]\times U},W^n_{\cdot \wedge s})]=\mE^{\mP'}[ M^n_s(\phi)\varphi( X^n_{\cdot\wedge s}, \Lambda^n|_{[0,s]\times U},W^n_{\cdot \wedge s})] ,
\end{align}
Combining the estimate of $X^n$ and $X'$ above, and using Lemma 4.20 of \cite{RSDE} to handle the rough integral term, we have the following convergence:
\begin{align}
    \lim_{n\to \infty}\sup_{t\in [0,T]}|M^n_t(\phi)-M'_t(\phi)|\to 0, \quad {\rm in\ \ } L^m(\Omega',\F',\mP'),
\end{align}
with $M'(\phi)$ defined similarly as $M^n(\phi)$ by replacing $(X^n,\Lambda^n,W^n)$ with $(X',\Lambda',W')$. Sending $n\to \infty$ in \eqref{martingale:Mn} gives
\begin{align}
    \mE^{\mP'}[M'_t(\phi)\varphi(X'_{\cdot \wedge s}, \Lambda'|_{[0,s]\times U},W'_{\cdot \wedge s})]=\mE^{\mP'}[ M'_s(\phi)\varphi( X'_{\cdot\wedge s}, \Lambda'|_{[0,s]\times U},W'_{\cdot \wedge s})].
\end{align}
Because $\cL^{\mP'}(M'_t(\phi),X',\Lambda',W')=\cL^\mP(M_t(\phi),X,\Lambda,W)$ for each $t\in [0,T]$, we arrive at the martingale condition for $M(\phi)$ defined in \eqref{def:Mphiomega1-XW}.
\end{proof}

To establish the existence of pathwise MFE, we need to apply the Kakutani's fixed point theorem to the set-valued mapping
\begin{align}
    \Phi: \bmu \mapsto \{\bnu=X_\#\mP:\mP\in R^{\rm opt}\big(\bB,\bmu\big)\}.
\end{align}
To this end, we split our proof into three main steps elaborated in the following subsections.

\subsection{Step-2: identify the domain of $\Phi$}
\label{subsec:step-2}
Consider the following domain of $\bmu$:
\begin{align}
     \cP_{M,\epsilon} = \{\bmu\in \cL(\bD^{\beta,\beta'}_\bB L^{m,\infty}): \|(Y,Y')\|_{\bB;\beta,\beta';m,\infty;[s,t];\mP}\leq M,\forall s,t{\rm\ with\ }t-s<\epsilon,\\
        {\rm\ for\ some\ representative\ }(Y,Y') \}.\label{eq:def:domain}
\end{align}
We will choose $M,\epsilon$ later on to ensure that $\Phi$ is invariant on $\cP_{M,\epsilon}$. Here, for the sake of convexity, we use a slight variation of rough path norms in this section:
\begin{align}
    \|(Y,Y')\|_{\bB;\beta,\beta';m,\infty;[s,t];\mP}:=\bigg(\frac{1}{3}(\|\delta Y\|^m_{\beta;m.\infty;[s,t];\mP}+\|Y'\|^m_{\beta';m,\infty;[s,t];\mP}+\|\mE_\cdot [R^Y]\|^m_{\beta+\beta';\infty;[s,t];\mP})\bigg)^{1/m}.
\end{align}
This revision preserves all estimates in the previous subsections, up to a constant depending only on $m$.

\begin{remark}
    We can extend the local bound in the definition of $\cP_{M,\epsilon}$ to a global bound. More precisely, using the argument in Step 2 of the proof of Proposition 4.5 in \cite{RSDE}, we easily obtain a constant $C$, depending only on $M,\epsilon, T,m$, such that
    \begin{align}
         \|(Y,Y')\|_{\bB;\beta,\beta';m,\infty}\leq C,
    \end{align}
    for any $\bmu\in \cP_{M,\epsilon}$ and any representation $(Y,Y')$ of $\bmu$. This fact will be used to provide a uniform estimate in the proof of continuity.

    However, we emphasize that the local definition of $\cP_{M,\epsilon}$ seems to be indispensable for the invariance property in Theorem \ref{thm:pathwiseMFE} due to the polynomial dependence of $ \llbracket(\tilde \sigma^0,\tilde \sigma')\rrbracket_{\bB;\beta,\beta'}$ in the a priori estimate \eqref{eq:aprioriest}.
\end{remark}

\begin{lemma}
    For any $M,\epsilon>0$, $\cP_{M,\epsilon}$ is a convex and {compact} subset of $\cP(\cX)$, equipped with the topology of weak convergence.
\end{lemma}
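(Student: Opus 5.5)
The plan has three parts: convexity, tightness, and closedness of $\cP_{M,\epsilon}$.

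\emph{Convexity.} Take $\bmu^1,\bmu^2\in\cP_{M,\epsilon}$ with representations $(Y^i,Y^{i\prime})$ on filtered spaces $(\Omega^i,\F^i,\mF^i,\mP^i)$, and fix $\theta\in[0,1]$. We build a mixture representation on the disjoint union, or equivalently on the product $\Omega^1\times\Omega^2\times\{1,2\}$. There a Bernoulli label $\iota$ with $\mP(\iota=1)=\theta$ is $\F_0$-measurable and independent of everything else, and we set $Y=Y^\iota$, $Y'=Y^{\iota\prime}$. Then $\cL(Y)=\theta\bmu^1+(1-\theta)\bmu^2$, and the initial law is still $\lambda$.

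Since $\iota$ is known at time $0$, conditional expectations given $\F_s$ split according to the value of $\iota$. Hence
\begin{align}
\big\|\|\delta Y_{s,t}|\F_s\|_m\big\|_\infty=\max_i \big\|\|\delta Y^i_{s,t}|\F^i_s\|_m\big\|_\infty .
\end{align}
The same identity holds for $Y'$ and for $\mE_\cdot[R^Y]$. So every component norm on $[s,t]$ with $t-s<\epsilon$ is at most the maximum of the two, and therefore at most $M$. Convexity follows.

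The averaged $m$-th power form of the norm was introduced precisely to allow an averaging argument. If $\iota$ were not revealed at time $0$, one would need $\mE[\,\cdot\,|\F_s]$ to mix, and Jensen with the $m$-th powers would give the bound. Either route works, and I would present the $\F_0$-label construction as the simplest.

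\emph{Tightness.} Any representation satisfies $\|\delta Y\|_{\beta;m,\infty;[s,t]}\le 3^{1/m}M$ on short intervals. Chaining finitely many intervals of length less than $\epsilon$ (see the Remark above) gives $\mE|Y_t-Y_s|^m\le C|t-s|^{m\beta}$ globally. Because $m\beta>1$ and $Y_0\sim\lambda$ is fixed, Kolmogorov's criterion yields tightness of $\cP_{M,\epsilon}$ in $\cP(\cX)$. This is exactly as in Lemma \ref{lemma:tight:1}.

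\emph{Closedness.} This is the main obstacle. Let $\bmu^n\to\bmu$ weakly with representations $(Y^n,Y^{n\prime})$ on varying spaces. The derivative processes $Y^{n\prime}$ are not recorded in $\bmu^n$, so we must carry them along. We consider the joint laws $\cL(Y^n,Y^{n\prime})$ on $C([0,T];\mR^d)\times C([0,T];\mR^{d\times k})$. The bound $\|Y^{n\prime}\|_{\beta';m,\infty}\le C$ with $m\beta'>1$ (taking $m$ large as needed, or else working in a weaker space such as $L^m$ in time) gives tightness of these joint laws. Passing to a subsequence and using Skorohod, we get $(Y^n,Y^{n\prime})\to(Y,Y')$ a.s. on a common space. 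We equip it with the filtration generated by $(Y,Y')$.

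We then transfer the conditional bounds. As in the proof of Lemma \ref{lemma:pathwise:closedness}, we use Lemma \ref{lemma:isometry}-type invariance to restrict to the natural filtrations generated by $(Y^n,Y^{n\prime})$; this only decreases the norms. For bounded continuous $\varphi\ge0$ depending on the paths up to time $s$, Fatou and uniform integrability give
\begin{align}
\mE[|\delta Y_{s,t}|^m\varphi]\le\liminf_n\mE[|\delta Y^n_{s,t}|^m\varphi^n]\le C|t-s|^{m\beta}\mE[\varphi].
\end{align}
The same works for $\delta Y'$.

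The delicate term is $\mE_s[R^Y_{s,t}]$, because it is a conditional expectation rather than an $m$-th moment. For it we use the dual characterization
\begin{align}
\|\mE_s[R]\|_\infty=\sup\big\{\mE[R\,\xi]:\xi\ \F_s\text{-measurable},\ \|\xi\|_1\le1\big\},
\end{align}
approximating $\xi$ by bounded continuous functionals of the past. Since $R^{Y^n}_{s,t}\to R^Y_{s,t}$ in $L^1$ and $\mE[R^{Y^n}_{s,t}\xi^n]\le M|t-s|^{\beta+\beta'}\|\xi^n\|_1$, we get
\begin{align}
\mE[R^Y_{s,t}\,\xi]\le M|t-s|^{\beta+\beta'}\|\xi\|_1 ,
\end{align}
which is the required bound.

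The remaining care is that the normalized $m$-th power combination passes to the limit. This needs the componentwise bounds to be handled jointly. I would take the liminf of the sum of the three $m$-th powers, using that each $\sup_{s\le t}$-type quantity is lower semicontinuous under the above approximation. With that, $(Y,Y')$ is a representation of $\bmu$ in $\cP_{M,\epsilon}$, which proves closedness. Together with tightness, this gives compactness.
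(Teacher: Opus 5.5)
\textbf{The convexity step fails as written.} Revealing the label at time $0$ does give what you say: each of the three component norms of the mixture on $[s,t]$ is the maximum of the corresponding components $a_i=\|\delta Y^i\|$, $b_i=\|Y^{i\prime}\|$, $c_i=\|\mE_\cdot[R^{Y^i}]\|$ of the two pieces. But $\bmu^i\in\cP_{M,\epsilon}$ only gives $\frac13(a_i^m+b_i^m+c_i^m)\le M^m$, so each component is individually bounded only by $3^{1/m}M$. The mixture's norm is $\bigl(\frac13(\max_i a_i^m+\max_i b_i^m+\max_i c_i^m)\bigr)^{1/m}$, and this is not controlled by $\max_i\frac13(a_i^m+b_i^m+c_i^m)$ when the maxima come from different $i$. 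In general you only land in $\cP_{2^{1/m}M,\epsilon}$.

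This actually happens. Take $Y^1=\zeta+\sigma W$ with $Y^{1\prime}=0$, choosing $\sigma$ so that $\sup_{t-s<\epsilon}\frac13 a_1([s,t])^m=M^m$; this works because $a_1$ grows like $(t-s)^{1/2-\beta}$ and $\beta<\frac12$. Take also $Y^2=\zeta+\kappa\int_0^\cdot r\,dB_r$ with $Y^{2\prime}_r=\kappa r$ and $\kappa$ small. Then $b_1=c_1=0$ and $b_2=\kappa(t-s)^{1-\beta'}>0$. Your mixture therefore has $\frac13(a^m+b^m+c^m)\ge\frac13(a_1^m+b_2^m)>M^m$ on intervals of length close to $\epsilon$. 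So your remark that ``either route works'' is false: with the averaged $m$-th power norm, only a construction in which conditional moments average can reach the constant $M$.

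\textbf{The paper takes that averaging route, but it has its own problem.} There $\xi$ sits on a third factor and the filtration is $\tilde\mF^1\otimes\tilde\mF^2$, so $\xi$ is independent of $\tilde\F_s$. Hence $\mE[|\delta Y_{s,t}|^m|\tilde\F_s]=\theta\,\mE[|\delta Y^1_{s,t}|^m|\tilde\F^1_s]+(1-\theta)\,\mE[|\delta Y^2_{s,t}|^m|\tilde\F^2_s]$. Bounding the sup of the average by the average of the sups, and using convexity of $x\mapsto x^m$ for the remainder, makes each $m$-th power component at most the $\theta$-average; summing gives $M^m$. However, this leaves $Y=Y^\xi$ non-adapted to $\tilde\mF^1\otimes\tilde\mF^2$, contrary to the progressive measurability required of a stochastic controlled rough path. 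Your instinct to put the label into the filtration is the one compatible with the definition.

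Once adaptedness is enforced, the example cannot be repaired. Take any adapted representation $(Y,Y')$ of $\theta\bmu^1+(1-\theta)\bmu^2$. Then $\F^Y_u\subset\F_u$. The event that $Y-Y_0$ follows $\kappa\int_0^\cdot r\,dB_r$ up to time $u>0$ lies in $\F^Y_u$ and identifies the component. Passing to $\F^Y_u$ therefore forces $\|\delta Y\|\ge a_1$. For a truly rough $\bB$ (e.g.\ a typical Brownian rough path), the remainder condition forces $Y'_u=\kappa u$ on that event, so $\|Y'\|\ge b_2$. Hence, with adapted representations and the averaged norm, $\cP_{M,\epsilon}$ need not be convex.

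\textbf{What does work.} Your $\F_0$-label argument does prove convexity if $\cP_{M,\epsilon}$ is defined through the equivalent norm $\max\{\|\delta Y\|,\|Y'\|,\|\mE_\cdot[R^Y]\|\}$ on $[s,t]$. Your tightness and closedness parts follow the paper's Skorohod argument and work equally well for that norm: natural filtrations, Fatou for the conditional moments, $L^1$--$L^\infty$ duality for $\mE_s[R^Y]$, and componentwise lower semicontinuity. Your handling of the remainder term is in fact more complete than the paper's appeal to Lemma \ref{lemma:pathwise:closedness}. Finally, you need no hedge for the tightness of $Y'$: $\beta'>\frac{1}{1+\gamma}\ge\frac13$ and $m\ge4$ (as in Proposition \ref{prop:compact}) already give $m\beta'>1$.
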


\begin{proof}
    We first prove that it is convex. Suppose $\bmu^1,\bmu^2\in \cP_{M,\epsilon}$, with representations $(Y^1, Y^{1'})$, $(Y^2,Y^{2'})$ on $(\tilde \Omega^1,\tilde \mF^1,\tilde \F^1,\tilde \mP^1)$ and $(\tilde \Omega^2,\tilde \mF^2,\tilde \F^2,\tilde \mP^2)$ respectively. To construct a representation of $\lambda\bmu^1+(1-\lambda)\bmu^2$ for $\lambda\in (0,1)$, we consider a probability space $(\tilde \Omega^3,\tilde \F^3, \tilde \mP^3)$ supporting a Bernoulli random variable $\xi$ such that $\tilde \mP^3(\xi=1)=\lambda$. Then, define $\tilde \Omega = \tilde \Omega^1\times \tilde \Omega^2\times \tilde \Omega^3$, $\tilde \mF = \tilde \mF^1\otimes \tilde \mF^2$, $\tilde \F=\tilde \F^1\otimes \tilde\F^2\otimes \tilde \F^3$, $\tilde \mP=\tilde \mP^1\times \tilde \mP^2\times \tilde \mP^3$. All $\sigma$-fields, filtration, stochastic processes and random variables naturally extend to the large space $\tilde \Omega$. Moreover, define two processes $(Y,Y')$ on $\tilde \Omega$ as follows: $Y=Y^1$ and $Y'=Y^{1'}$ if $\xi=1$; $Y=Y^2$ and $Y'=Y^{2'}$ if $\xi=0$. By construction $\xi$ is independent of $\tilde \mF$, $(Y^j,Y^{j'})$ is independent of $\tilde \mF^{2-j}$ for $j=0,1$. We then compute the rough path norms of $(Y,Y')$. For any $(s.t)\in \triangle$ and $A\in \tilde \F_s$,
    \begin{align}
        \mE^{\tilde \mP}[|Y_t-Y_s|^m1_A]=&\mE^{\tilde \mP}[|Y_t-Y_s|^m1_A1_{\xi=1}]+\mE^{\tilde \mP}[|Y_t-Y_s|^m1_A1_{\xi=0}]\\
        =&\lambda\mE^{\tilde \mP}[|Y^1_t-Y^1_s|^m1_A]+(1-\lambda)\mE^{\tilde \mP}[|Y^2_t-Y^2_s|^m1_A]\\
        =&\lambda \mE^{\tilde \mP}\big[\mE^{\tilde\mP}[|Y^1_t-Y^1_s|^m|\tilde \F^1_s]1_A\big]+(1-\lambda) \mE^{\tilde \mP}\big[\mE^{\tilde\mP}[|Y^2_t-Y^2_s|^m|\tilde \F^2_s]1_A\big].
    \end{align}
    Therefore,
    \begin{align}
        \mE^{\tilde \mP}[|Y_t-Y_s|^m|\tilde \F_s]=&\lambda\mE^{\tilde\mP}[|Y^1_t-Y^1_s|^m|\tilde \F^1_s]+(1-\lambda)\mE^{\tilde\mP}[|Y^2_t-Y^2_s|^m|\tilde \F^2_s]\\
        \leq & \big(\lambda\|\delta Y^1\|^m_{\beta;m,\infty;[s,t];\tilde \mP}+(1-\lambda)\|\delta Y^2\|^m_{\beta;m,\infty;[s,t];\tilde \mP}\big)(t-s)^{m\beta},
    \end{align}
    yielding
    \begin{align}
        \|\delta Y\|^m_{\beta;m,\infty;[s,t];\tilde \mP}\leq \lambda\|\delta Y^1\|^m_{\beta;m,\infty;[s,t];\tilde \mP}+(1-\lambda)\|\delta Y^2\|^m_{\beta;m,\infty;[s,t];\tilde \mP}
    \end{align}
    Repeating the same argument to $Y'$ and $\mE_\cdot[R^Y]$, and adding all pieces together\footnote{Note that the uniform bound of $\|Y'\|_m$ is easier due to the convexity of $x\mapsto x^m$.}, we obtain, whenever $t-s<\epsilon$,
    \begin{align}
         \|(Y,Y')\|^m_{\bB;\beta,\beta';m,\infty;[s,t];\tilde \mP}\leq & \frac{\lambda}{3}(\|\delta Y^1\|^m_{\beta;m,\infty;[s,t];\tilde \mP}+\| Y^{1'}\|^m_{\beta';m,\infty;[s,t];\tilde \mP}+\|\mE_\cdot[R^{Y^1}]\|_{\beta+\beta';\infty;[s,t];\tilde \mP})\\
         &+\frac{1-\lambda}{3}(\|\delta Y^2\|^m_{\beta;m,\infty;[s,t];\tilde \mP}+\| Y^{2'}\|^m_{\beta';m,\infty;[s,t];\tilde \mP}+\|\mE_\cdot[R^{Y^2}]\|_{\beta+\beta';\infty;[s,t];\tilde \mP})\\
         \leq &M^m.
    \end{align}
    This proves $\lambda\bmu^1+(1-\lambda)\bmu^2\in \cP_{M,\epsilon}$.

    Next, we prove that it is compact. Since bounded rough path norms, together with integrability of the initial distribution $\lambda$, provide tightness (see the proof of Lemma \ref{lemma:tight}), it is sufficient to prove closedness. Suppose $\bmu^n\in \cP_{M,\epsilon}$, and $\bmu^n\to \bmu$ weakly in $\cP(\cX)$. Note that each $\bmu^n$ has a representation $(Y^n,Y^{n'})$ on $(\Omega^n,\mP^n)$. The family of joint laws $\{\cL^{\mP^n}(Y^n,Y^{n'})\}_{n\geq 1}$ is tight, because by the definition of $\cP_{M,\epsilon}$,
    \begin{align}
        &\mE^{\mP^n}[|Y^n_t-Y^n_s|^m]\lesssim (t-s)^{m\beta},\\
        &\mE^{\mP^{n}}[|Y^{n'}_t-Y^{n'}_s|^m]\lesssim (t-s)^{m\beta'}.
    \end{align}
    Therefore, applying Skorohod representation, we can find a common probability space $(\tilde \Omega',\tilde \mP')$ on which (up to a subsequence) we have almost sure convergence $(\tilde Y^n,\tilde Y^{n'})\to (\tilde Y,\tilde Y')$, and $\cL^{\tilde \mP'}(\tilde Y^n,\tilde Y^{n'})=\cL^{\mP^n}(Y^n,Y^{n'})$. By considering the filtration generated by $(\tilde Y,\tilde Y')$, the argument in the proof of Lemma \ref{lemma:pathwise:closedness} gives the desired rough path bound of $(\tilde Y,\tilde Y')$. Moreover, the uniqueness of weak convergence limit ensures that $\cL^{\tilde \mP'}(\tilde Y)=\bmu$. Therefore, $(\tilde Y,\tilde Y')$ is a representation of $\bmu$ satisfying the property in the definition of $\cP_{M,\epsilon}$, which proves $\bmu\in \cP_{M,\epsilon}$.
\end{proof}

\subsection{Step-3: the upper hemicontinuity of $\Phi$}

In this section, we establish the upper hemicontinuity of the fixed point mapping $\Phi$. The following uniqueness result of $R(\bB,\bmu)$ is useful.

\begin{proposition}\label{lemma:R:uniqueness}
    Fix indices $(\beta,\beta')\in \Pi$, $m\geq 2$. For any causal coupling $Q\in \cP_W(\cQ\times \cW)$, $\lambda \in \cP_m(\mR^d)$, $ \bmu\in \cL(\bD^{\beta,\beta'}_\bB L^{m,\infty})$, if $\mP^1,\mP^2\in R(\bB,\bmu)$ are such that $(\Lambda,W)_\#\mP^1 =(\Lambda,W)_\#\mP^2= Q$,  we have $\mP^1=\mP^2$.
\end{proposition}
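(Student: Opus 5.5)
The strategy is a Yamada--Watanabe type argument: realize both $\mP^1$ and $\mP^2$ as solutions of one and the same RSDE on a common probability space carrying a single copy of $(\xi,\Lambda,W)$, and then invoke pathwise uniqueness of RSDEs with random Lipschitz drift from \cite{RSDE}. The key input is Proposition \ref{prop:equiv}. Under each $\mP^i$ the canonical process satisfies $\mP^i$-a.s.
\begin{align}
X_t=X_0+\int_0^t\int_U b(s,X_s,\mu_s,u)\Lambda_s(du)ds+\int_0^t\sigma(s,X_s,\mu_s)dW_s+\int_0^t(\tilde\sigma^0,\tilde\sigma')\circ(X,\tilde\sigma^0(X))d\bB .
\end{align}
Here the controlled vector field $(\tilde\sigma^0,\tilde\sigma')$ from Lemma \ref{lemma:construct_cvf} depends only on $\bmu$, so it is the same for $i=1,2$.

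\textbf{Step 1 (gluing).} I would build a common space by conditioning on the shared marginal. Let $\mP^i_{\xi,\omega}(dx)$ denote the regular conditional law of $X$ given $(X_0,\Lambda,W)$ under $\mP^i$. Both joint laws of $(X_0,\Lambda,W)$ equal $\lambda\otimes Q$, using condition 1 of Definition \ref{def:pathwise-relax-control}. On $\cX\times\cX\times\R^d\times\cQ\times\cW$ define
\begin{align}
\pi(dx^1,dx^2,d\xi,dq,dw)=\mP^1_{\xi,q,w}(dx^1)\,\mP^2_{\xi,q,w}(dx^2)\,\lambda(d\xi)\,Q(dq,dw).
\end{align}
Equip this space with the filtration generated by $(X^1,X^2,\Lambda,W)$. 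Two things must then be checked. First, $W$ is still a Brownian motion for this joint filtration. Second, each $(X^i,\Lambda,W)$ still solves the RSDE above on the new space, with the rough integral and the stochastic integral correctly identified.

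\textbf{Step 2 (main obstacle).} The Brownian property is the delicate point. It requires showing that, under $\mP^i$, the filtration $\F^{X,\Lambda}_t$ is conditionally independent of $\F^W_T$ given $\F^W_t$; this is the causality property, and it holds because $W$ is an $(\mP^i,\mF)$-Brownian motion (Remark \ref{martignale-condition-special-case}). Conditional independence then propagates to the product of the two conditional kernels, and I would use the causal-coupling lemmas of Appendix \ref{app:causal-coupling} for this. Once $W$ is Brownian in the enlarged filtration, the Itô integral is identified by a Riemann-sum argument, exactly as in the proof of Proposition \ref{lemma:nonemptyness:strong}. The rough integral is identified by the Riemann sums $A^\cP$ together with convergence in probability. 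The rough path norms of $(X^i,\tilde\sigma^0(X^i))$ must remain finite in the enlarged filtration, since this filtration is larger than the one under which those norms were established. To control them I would apply the a priori estimate of Corollary \ref{lemma:tight} to the RSDE solved on the new space, or repeat the conditioning argument of Lemma \ref{lemma:isometry}. This transfer of the finite-norm condition is where I expect most of the technical care to be needed.

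\textbf{Step 3 (conclusion).} Both $X^1$ and $X^2$ now solve, on one filtered space with the same initial value $\xi$, the same RSDE with random bounded Lipschitz drift $\bar b(t,x,\mu,\omega)=\int_U b(t,x,\mu_t,u)\Lambda_t(du)$, diffusion $\sigma$, and controlled vector field $(\tilde\sigma^0,\tilde\sigma')$. Pathwise uniqueness for $L^{m,\infty}$-solutions (Theorem 4.6 of \cite{RSDE}) gives $X^1=X^2$ $\pi$-a.s. Therefore the law of $(X^1,\Lambda,W)$ equals the law of $(X^2,\Lambda,W)$, which is $\mP^1=\mP^2$.
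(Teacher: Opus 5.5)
Your Steps 1 and 3 are exactly the paper's proof. You glue the conditional kernels of $X$ (the paper uses $X-X_0$) given $(X_0,\Lambda,W)$ against $\lambda\otimes Q$, and you conclude with the strong uniqueness of Theorem 4.6 of \cite{RSDE}. The paper disposes of the Brownian property on the glued space by citing ``the standard argument of the Yamada--Watanabe theorem''. The justification you give for that step in Step 2 does not work.

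Causality of $(X,\Lambda)$ with respect to $W$ under each $\mP^i$ is not inherited by the conditionally independent product of kernels. The reason is that $K^i(\cdot\,;x_0,q,w)$ may depend on the future of $q$ jointly with the future of $w$. For example, take $t<t'$, and let $Q$ make $\Lambda$ deterministic on $[0,t']$ and record on $(t',T]$ a fair coin $\eta$ independent of $W$; set $\zeta=\mathrm{sgn}(W_{t'}-W_t)$. Consider laws with $(\Lambda,W)$-marginal $Q$ under which $X_{\cdot\wedge t}$ records $\eta\zeta$, respectively $\eta$. Both satisfy your causality condition, and $W$ is an $\mF$-Brownian motion under each. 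Yet under the glued measure $\zeta$ is measurable at time $t$, so $W$ is not Brownian for the glued filtration. Such laws are not in $R(\bB,\bmu)$, but the example shows the needed property cannot come from causality alone.

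What the gluing actually requires is compatibility of $X$ with the whole input. (In the classical proof this follows from independence of future Brownian increments only because the input there is $(X_0,W)$.) Precisely: for every $t$ and $A\in\F^X_t$, $K^i(A;x_0,q,w)$ must be measurable with respect to the $\lambda\otimes Q$-completion of $\sigma(x_0,q|_{[0,t]\times U},w_{\cdot\wedge t})$. Equivalently, under $\mP^i$ the $\sigma$-field $\F_t$ is conditionally independent of $\sigma(X_0,\Lambda,W)$ given $\F^{X_0,\Lambda,W}_t$.

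Granting this property:
\begin{enumerate}
\item The Brownian property follows from Lemma~\ref{lemma:BMexistence}.
\item Conditioning on the glued $\sigma$-field at time $s$ reduces to conditioning on $\F^{X^i,\Lambda,W}_s$, so the rough-path norms transfer with equality as in Lemma~\ref{lemma:isometry}.
\end{enumerate}
Your alternative of applying Corollary~\ref{lemma:tight} on the new space is circular, since it presupposes membership in $\bD^{\beta,\beta'}_\bB L^{m,\infty}$ for the new filtration.

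However, Definition~\ref{def:pathwise-relax-control} imposes nothing on how the future of $\Lambda$ relates to the past of $X$; closed-loop relaxed controls are allowed. So compatibility has to be obtained in one of two ways:
\begin{itemize}
\item Derive it from the dynamics. For instance, show that $X$ coincides with the strong solution of Theorem 4.6 of \cite{RSDE} constructed on the filtration of $(X_0,\Lambda,W)$. This requires bounding that solution's rough-path norms relative to the larger filtration $\mF$, and it then makes the gluing unnecessary.
\item Build it into the admissibility definition, and recheck it in Proposition~\ref{lemma:nonemptyness:strong} and Lemma~\ref{lemma:pathwise:closedness}.
\end{itemize}
The paper's one-line appeal to the standard argument tacitly uses the same property. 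Supplying it is the missing step in your proposal.
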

\begin{proof}
The proof is mainly based on the RSDE characterization in Proposition \ref{prop:equiv} and the Yamada-Watanabe construction; see, e.g., Proposition 5.3.20 of \cite{karatzas1991brownian}. To be precise, first lift $\mP^j$, $j=1,2$, to the space $\cX\times \mR^d\times \cQ\times \cW$ to encode the initial distribution, i.e., $\hat\mP^j=(X_0,Y,\Lambda,W)_\#\mP^j$, $Y:=X-X_0$. Because $\cX$ and $\mR^d\times \cQ\times \cW$ are both Polish, there exist regular conditional probability distributions of $Y$ under $\mP^j$, given $ X_0=x_0,\Lambda=q, W=\bw$, denoted by $K^j(dy;x_0,q,\bw)$. On the extended probability space $\hat \Omega := \cX\times \cX\times \mR^d\times \cQ\times \cW$ with coordinate mapping $(\hat Y^1,\hat Y^2,\hat X_0,\hat\Lambda,\hat W)$, let us consider
\begin{align}
    \hat\mP(dy^1,dy^2,dx_0,dq,d\bw): = K^1(dy^1;x_0,q,\bw)\times K^2(dy^2;x_0,q,\bw)\lambda(dx_0)Q(dq,d\bw).
\end{align}
   Endow $\hat \Omega$ with $\hat\mF$, its natural filtration augmented and completed by $\hat\mP$. The standard argument of the Yamada-Watanabe theorem ensures that $\hat W$ is a $(\hat\mP,\hat \mF)$ Brownian motion, and, with $\hat X^j = Y^j+\hat X_0$, both $\hat X^1$ and $\hat X^2$ satisfy the RSDE \eqref{RSDE-for-PinR}. Assumption \ref{ass:} and the strong uniqueness in Theorem 4.6 of \cite{RSDE} yield $\hat X^1=\hat X^2$, $\hat \mP$-a.s.. Then $\mP^1=\cL^{\hat\mP}(\hat X_0+\hat Y^1,\hat\Lambda,\hat W)=\cL^{\hat\mP}(\hat X_0+\hat Y^2,\hat\Lambda,\hat W)=\mP^2$, which completes the proof.
\end{proof}

\begin{lemma}\label{lemma:Phi:continuity}
    The set-valued mapping $\Phi:\cP_{M,\epsilon}\ni \bmu\mapsto R^{\rm opt}(\bB,\bmu)$ is upper hemicontinuous.
\end{lemma}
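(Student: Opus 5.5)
Since $\cP(\Omega)$ is metrizable, I will prove upper hemicontinuity through the closed-graph criterion combined with compactness. Take $\bmu^n\to\bmu$ in $\cP_{M,\epsilon}$ and $\mP^n\in R^{\rm opt}(\bB,\bmu^n)$. I must show that along a subsequence $\mP^n\to\mP$ weakly, with $\mP\in R^{\rm opt}(\bB,\bmu)$.

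\textbf{Step 1: a uniform a priori bound and tightness.} By the global-bound remark following the definition of $\cP_{M,\epsilon}$, every $\bmu^n$ has representations with a uniformly bounded rough path norm. Lemma \ref{lemma:construct_cvf} then bounds $\llb(\tilde\sigma^{0,n},\tilde\sigma'^{,n})\rrb_{\bB;\beta,\beta'}$ uniformly in $n$. Corollary \ref{lemma:tight} therefore gives a bound on $\|(X,\hat\sigma^0)\|$ under $\mP^n$ that does not depend on $n$. The argument of Lemma \ref{lemma:tight:1} then shows $\{\mP^n\}$ is tight, so a subsequence converges, $\mP^n\to\mP$.

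\textbf{Step 2: $\mP\in R(\bB,\bmu)$.} I will follow the proof of Lemma \ref{lemma:pathwise:closedness}, now letting the flow vary as well.
\begin{enumerate}
\item Pass to a Skorohod representation on a common space.
\item Recover condition 2 from the uniform conditional moment bounds, using Lemma \ref{lemma:isometry}.
\item For condition 3, show $M^n_t(\phi)\to M'_t(\phi)$ in $L^1$.
\end{enumerate}
The drift and It\^o terms in item 3 converge by the continuity of the coefficients in $\mu$ together with $\cW_2(\mu^n_t,\mu_t)\to0$. Weak convergence alone gives only this, so I will upgrade it to $\cW_2$ convergence using the uniform $m$-th moments ($m\ge4$). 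The rough integral term needs more care. The controlled vector fields $(\tilde\sigma^{0,n},\tilde\sigma'^{,n})$ converge to $(\tilde\sigma^0,\tilde\sigma')$ uniformly in $x$, in supremum over $t$, and they are uniformly bounded in the $(\beta,\beta')$ norm. Interpolating, they converge in a slightly weaker norm $(\bar\beta,\bar\beta')$ that still lies in $\Pi$. I then use the stability estimate for rough stochastic integrals (Lemma 4.20 of \cite{RSDE}, or its analogue for RSDE solutions) in that weaker norm.

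\textbf{Step 3: optimality of $\mP$.} This is the main obstacle. Fix $\mP'\in R(\bB,\bmu)$ and set $Q=(\Lambda,W)_\#\mP'$, which is a causal coupling. By Proposition \ref{lemma:nonemptyness:strong}, choose $\mP'^n\in R(\bB,\bmu^n)$ with $(\Lambda,W)_\#\mP'^n=Q$.

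I will show $\mP'^n\to\mP'$. By Steps 1 and 2, every subsequential limit lies in $R(\bB,\bmu)$ and still has $(\Lambda,W)$-marginal $Q$. Proposition \ref{lemma:R:uniqueness} then forces every such limit to equal $\mP'$. This is the step that replaces the Gronwall-type stability that fails for rough norms.

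Continuity of $J_{\rm pw}$ follows from the bounded continuity of $f$ and $g$ together with $\cW_2$ convergence of $\mu^n_t$. Concretely, the integrand in $(x,q)$ converges uniformly on compacts, and tightness controls the rest. This yields
\[
J_{\rm pw}(\bmu,\mP)=\lim J_{\rm pw}(\bmu^n,\mP^n)\le\lim J_{\rm pw}(\bmu^n,\mP'^n)=J_{\rm pw}(\bmu,\mP'),
\]
so $\mP\in R^{\rm opt}(\bB,\bmu)$. Pushing forward by $X$ then gives upper hemicontinuity of $\bmu\mapsto X_\#R^{\rm opt}(\bB,\bmu)$, and compactness of the values follows in the same way.

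The delicate points are the convergence of the rough integral when the vector field depends on $n$, which may require the loss of regularity to $(\bar\beta,\bar\beta')$, and the fact that uniqueness genuinely needs the coupling with $W$ encoded in the canonical space.
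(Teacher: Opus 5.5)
Your proposal is correct and follows essentially the paper's proof. The ingredients match: uniform bounds from the representations in $\cP_{M,\epsilon}$ and Corollary \ref{lemma:tight}, and the closedness argument of Lemma \ref{lemma:pathwise:closedness} with Lemma 4.20 of \cite{RSDE} to pass to the limit in the rough integrals. Your interpolation to weaker exponents is the mechanism behind that lemma, but apply it to the composed pairs $(\Ti^n(\phi),\Ti^{n'}(\phi))$, not to the vector fields, whose spatial $C^\gamma$ part does not converge. Optimality likewise goes through Proposition \ref{lemma:nonemptyness:strong} combined with the uniqueness in Proposition \ref{lemma:R:uniqueness}. The only differences are that you extract the convergent subsequence by tightness instead of assuming $\mP^n_*\to\mP_*$, and you spell out the joint continuity of $J_{\rm pw}$ in $(\bmu,\mP)$, which the paper uses implicitly.
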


\begin{proof}
   Suppose that $\bmu^n,\bmu\in \cP_{M,\epsilon}$, $\bmu^n\to\bmu$ weakly, and $\mP^n_*\in R^{\rm opt}(\bB,\bmu^n)$ and $\mP^n_*\to \mP_*$ weakly. We aim to prove that $\mP_*\in R^{\rm opt}(\bB,\bmu)$. By the property of $\cP_{M,\epsilon}$, we can choose a representation $(Y^n,Y^{n'})$ of $\bmu^n$ such that $\|(Y^n,Y^{n'})\|_{\bB;\beta,\beta',m;\infty}$ is uniformly bounded for $n\geq 1$. By Lemma \ref{lemma:construct_cvf} and the a priori estimate in Lemma \ref{lemma:tight}, $\|(X,\tilde\sigma^n(X)\|_{\bB;\beta,\beta';m,\infty}$ is uniformly bounded, where we denote $\tilde\sigma^n_t(x):=\tilde \sigma^0(t,x,\mu^n_t)$, $\tilde\sigma^{n'}_t(x) = \mE^{\tilde \mP'}[\partial_\mu \sigma^0(t,x,\mu^n_t)(Y^n)\cdot Y^{n'}]$, and $\hat\sigma^{n'}_t(x)=\nabla\tilde \sigma^n_t(x)\tilde \sigma^n_t(x)+\tilde\sigma^{n'}_t(x)$. Here, by $\bmu^n\to \bmu$ and the Skorokhod representation, we assume without loss of generality that the representation $(Y^n,Y^{n'})$ of $\bmu^n$ and $(Y,Y')$ of $\bmu$ are given on the same probability space $(\tilde \Omega',\tilde \mF',\tilde \F',\tilde \mP')$, and $\sup_{t\in [0,T]}\{|Y^n-Y|+|Y^{n'}-Y'|\}]\to 0$, $\tilde \mP'$-a.s.. In particular, because $\lambda\in \cP_m(\mR^d)$ and $m\beta>1$ in our setting, Kolmogorov's continuity theorem (Theorem 3.1 of \cite{rough_path_book}) yields $\mE[\sup_{t\in [0,T]}|Y^n_t|^m]\lesssim \mE[|Y_0|^m] + \mE[\sup_{t\in [0,T]}|\delta Y^n_{0,t}|^m]\lesssim \mE[|Y_0|^m] + \|(Y^n,Y^{n'})\|_{\bB;\beta,\beta',m,\infty}$, with a bound uniform in $n$. Therefore, we can upgrade the weak convergence $\bmu^n\to \bmu$ to $\cW_m$ convergence $\cW_m(\mu^n_t,\mu_t)\to 0$, $\forall t\in [0,T]$.

   As $\mP^n_*\to \mP_*$, we also have a common probability space $(\Omega',\F',\mP')$ such that $(\tilde X^n,\tilde \Lambda^n,\tilde W^n)\to (\tilde X,\tilde \Lambda,\tilde W)$, $\mP'$ almost surely; see the similar argument in the proof of Lemma \ref{lemma:pathwise:closedness}. As a consequence of Lemma \ref{lemma:T-phi-rough-path-norm}, we further have uniform boundedness of $\|(\Ti^n(\phi),\Ti^{n'}(\phi)\|_{\bB;\beta,\beta';m,\infty}$, where
   \begin{align}
        &\Ti^n_t(\phi) :=\nabla \phi(\tilde X^n_t) \tilde\sigma_t^n(\tilde X^n_t) \label{eq:Tn-def}\\
	&\Ti^{n'}_t(\phi) :=\nabla^2\phi(\tilde X^n_t)( \tilde\sigma^n_t(\tilde X^n_t),\tilde\sigma^n_t(\tilde X^n_t)) +\nabla\phi(\tilde X^n_t)\tilde\sigma^{n'}_t(\tilde X^n_t). \label{eq:Tnp-def}
   \end{align}
   Slightly abusing notation, we also define $(\Ti(\phi),\Ti'(\phi))$ similarly as in \eqref{eq:Tn-def}-\eqref{eq:Tnp-def}, with $\bmu^n$ replaced by $\bmu$ and $\tilde X^n$ replaced by $\tilde X$\footnote{This definition is consistent with the notation in \eqref{eq:Ti-def}-\eqref{eq:Tip-def}, though on different probability spaces.}. Thanks to the Lipschitz continuity of $\phi$, $\sigma^0$ and $\partial_\mu \sigma^0$, we have the following estimate for each $t\in [0,T]$:
   \begin{align}
    \mE^{\mP'}[|\Ti^n_t(\phi)-\Ti_t(\phi)|^m]+\mE^{\mP'}[|\Ti^{n'}_t(\phi)-\Ti'_t(\phi)|^m]\lesssim \|\nabla\phi\|_{C^2}(\mE^{\mP'}[|\tilde X^n_t-\tilde X_t|^m]+\cW_m(\mu^n_t,\mu_t)^m ).
   \end{align}
By virtue of the uniform boundedness of $\|(\Ti^n(\phi),\Ti^{n'}(\phi))\|_{\bB;\beta,\beta';m,\infty}$, we may invoke Lemma 4.20 of \cite{RSDE} to get
\begin{align}
    \sup_{t\in [0,T]}\bigg|\int_0^t (\Ti^n_s(\phi),\Ti^{n'}_s(\phi)d\bB-\int_0^t (\Ti_s(\phi),\Ti^{'}_s(\phi)d\bB\bigg|\to 0, {\rm\ \ in\ \ } L^m(\Omega',\mP').\label{convergence:rough-integral}
\end{align}
Repeating the arguments in the proof of Lemma \ref{lemma:pathwise:closedness}, we readily obtain $\mP_*\in R(\bB,\bmu)$. In particular, we have used the martingale property on both spaces $\Omega$ and $\Omega'$.

To prove that $\mP_*\in R^{\rm opt}(\bB,\bmu)$, we pick an arbitrary $\mP\in R(\bB,\bmu)$. By Proposition \ref{lemma:nonemptyness:strong}, we find a sequence of $\mP^n\in R(\bB,\bmu^n)$ such that $(\Lambda,W)_\# \mP^n\equiv (\Lambda,W)_\# \mP$. However, because $\{\bmu^n\}_{n\geq 1}\subset \cP_{M,\epsilon}$, we know that $\{\mP^n\}_{n\geq 1}$ is tight, and we find a limit $\mP^o\in R(\bB,\bmu)$ under weak convergence, with $(\Lambda,W)_\#\mP^o=(\Lambda,W)_\#\mP$. By Proposition \ref{lemma:R:uniqueness}, it holds that $\mP^o=\mP$. Letting $n\to \infty$ on both sides of $J_{\rm pw}(\mP^n;\bmu^n)\geq J_{\rm pw}(\mP^n_*;\bmu^n)$, we obtain $J_{\rm pw}(\mP;\bmu)\geq J_{\rm pw}(\mP_*;\bmu)$. Therefore, $\mP_*\in R^{\rm opt}(\bB,\bmu)$.
\end{proof}

\begin{remark}\label{rmk:hemicts}
    Because the cost function $J$ is continuous in $\mP$ under the weak topology, it is well known that, to establish the upper hemicontinuity of $\Phi$, we only need to show continuity of the {\it admissible-control set mapping} $\bmu\mapsto R(\bB,\bmu)$ (i.e., both upper and lower hemicontinuity). In \cite{Lacker2015}, this is achieved by using strong solvability and stability of SDEs, together with a Gronwall-type estimate. However, this approach creates technical challenges here. More precisely, stability of the RSDE solution maps is based on {\it rough path norms} (see Theorem 3.9 of \cite{RSDE}). As a consequence, if we wish to establish similar convergence via the RSDE strong solution maps as in \cite{Lacker2015}, we would need $\bmu^n\to\bmu$ to hold in a suitable rough path sense, whereas we work with the weak convergence topology.
\end{remark}

\subsection{Step-4: existence of fixed point}

\begin{theorem}\label{thm:pathwiseMFE}
    For any $m\geq 4$, there exists a pathwise MFE.
\end{theorem}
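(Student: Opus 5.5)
We apply Kakutani's fixed point theorem (in the Fan--Glicksberg form for locally convex spaces) to the map $\Phi:\bmu\mapsto\{X_\#\mP:\mP\in R^{\rm opt}(\bB,\bmu)\}$, restricted to $\cP_{M,\epsilon}$. Here $\cP(\cX)$ sits inside the locally convex space of finite signed measures with the weak topology. By the lemma of Step-2, $\cP_{M,\epsilon}$ is a nonempty convex compact set. The plan has four parts:
\begin{enumerate}
\item pick $M,\epsilon$ so that $\Phi(\cP_{M,\epsilon})\subset\cP_{M,\epsilon}$;
\item show that $\Phi(\bmu)$ is nonempty, convex and compact;
\item show that the graph of $\Phi$ is closed;
\item conclude: a fixed point $\bmu_*\in\Phi(\bmu_*)$ gives $\mP_*\in R^{\rm opt}(\bB,\bmu_*)$ with $X_\#\mP_*=\bmu_*$, i.e.\ a pathwise MFE.
\end{enumerate}

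For part 2, $R(\bB,\bmu)$ is nonempty by Proposition \ref{lemma:nonemptyness:strong} and compact by Proposition \ref{prop:compact}. The map $\mP\mapsto J_{\rm pw}(\bmu,\mP)$ is continuous, because $f,g$ are bounded and continuous and $\Lambda\mapsto\int\!\int f\,\Lambda_t(du)dt$ is continuous under stable convergence (we use continuity of $f$ in $(x,u)$ for fixed $t$, plus dominated convergence). Hence $R^{\rm opt}$ is nonempty and compact, and its image under the continuous map $X_\#$ is compact.

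For convexity, take $\mP^1,\mP^2\in R^{\rm opt}$. Conditions 1 and 3 of Definition \ref{def:pathwise-relax-control} are linear in $\mP$. Condition 2 is preserved as well: conditional $L^m$ moments given $\F_s$ under a mixture are bounded by the maximum of the two, since the conditional expectation is a convex combination weighted by the Radon--Nikodym densities. So $R(\bB,\bmu)$ is convex. $J_{\rm pw}$ is linear in $\mP$, so $R^{\rm opt}$ is convex, and therefore so is its image.

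For part 3, the closed graph follows from Lemma \ref{lemma:Phi:continuity} combined with the uniform tightness of $\bigcup_{\bmu\in\cP_{M,\epsilon}}R(\bB,\bmu)$. Tightness comes from Corollary \ref{lemma:tight} together with the global bound of the Remark following \eqref{eq:def:domain}. Concretely, if $\bmu^n\to\bmu$ and $\bnu^n=X_\#\mP^n_*\to\bnu$, we extract a convergent subsequence $\mP^n_*\to\mP_*$. Then $\mP_*\in R^{\rm opt}(\bB,\bmu)$ by Lemma \ref{lemma:Phi:continuity}, and $\bnu=X_\#\mP_*$.

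The main obstacle is the invariance in part 1. Given $\bmu\in\cP_{M,\epsilon}$ with a representation $(Y,Y')$, Lemma \ref{lemma:construct_cvf} can be localized to $[s,t]$. For $t-s<\epsilon$ it gives
\[
\llb(\tilde\sigma^0,\tilde\sigma')\rrb_{\bB;\beta,\beta';[s,t]}\lesssim 1+M+M^2 .
\]
Plugging this into Corollary \ref{lemma:tight} would give a bound on $\|(X,\hat\sigma^0)\|$ polynomial in $M$, which cannot be closed by itself. The remedy is the gap between Hölder exponents: the a priori estimate in \eqref{eq:aprioriest} controls $X$ in the stronger $\alpha$-Hölder scale, while $\cP_{M,\epsilon}$ only requires a bound in the $\beta$-scale with $\beta<\alpha$. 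I therefore plan to take $\beta<\alpha$ strictly (and similarly for $\beta'$), and to use
\[
\|\delta X\|_{\beta;[s,t]}\le (t-s)^{\alpha-\beta}\|\delta X\|_{\alpha;[s,t]} ,
\]
and likewise for $X'=\hat\sigma^0$ and for the remainder term. This yields
\[
\|(X,\hat\sigma^0)\|_{\bB;\beta,\beta';m,\infty;[s,t]}\le C\,\epsilon^{\theta}\,(1+M)^{2\gamma}
\]
for some $\theta>0$. The small initial contribution is harmless, since the norm only involves increments.

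The choice of constants is then to fix $M$ large (say $M\geq1$), and afterwards $\epsilon$ small so that $C\epsilon^\theta(1+M)^{2\gamma}\le M$. In this way $(X,\hat\sigma^0)$ on $(\Omega,\mF,\mP)$ is itself a representation of $X_\#\mP$ witnessing membership in $\cP_{M,\epsilon}$. The delicate point is checking that the constant $C$ in Corollary \ref{lemma:tight} is nondecreasing in $t-s$ and independent of $M$ and $\epsilon$. If the $\alpha$-scale estimate degenerates, the fallback is to use interpolation between the $\beta$- and $\alpha$-norms on short intervals.
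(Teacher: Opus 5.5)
Your proposal is correct and follows essentially the same route as the paper: Kakutani--Fan on $\cP_{M,\epsilon}$, nonempty compact convex values of $R^{\rm opt}(\bB,\bmu)$, a closed graph via Lemma~\ref{lemma:Phi:continuity} (your extraction of a convergent subsequence of $\mP^n_*$ by uniform tightness over $\cP_{M,\epsilon}$ makes explicit a step the paper leaves implicit), and invariance via the local a priori estimate plus the exponent gap ($\beta<\alpha$, and $\beta'<\beta$ for the $\hat\sigma^0$ increment) to gain a factor $\epsilon^{\theta}$. One small correction: the norm is not purely incremental, since it also contains the term $\sup_t\|\hat\sigma^0_t\|$, which is bounded only by a universal constant $C_0$ (of order $\|\sigma^0\|_\infty$; see the footnote in the paper's proof), so your bound should read $C_0+C\epsilon^{\theta}(1+M)^{2\gamma}$ and you must first choose $M$ larger than $C_0$ (say $M\geq 2C_0$ rather than $M\geq 1$) before shrinking $\epsilon$, which is exactly the paper's requirement $1+M^{\gamma''}\epsilon^{\delta'}\lesssim M$.
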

\begin{proof}
    Recall the definition of the cost functional $J_{\rm pw}$ in \eqref{cost-pathwise}. Because $f$ and $g$ are assumed to be bounded, it is clear that $\mP \mapsto J(\mP;\bmu)$ is continuous under the weak convergence topology. Therefore, $R^{\rm opt}(\bB,\bmu)$ is non-empty, compact, and convex. Consider the following set-valued mapping:
    \begin{align}\label{eq:def:Phi}
        \Phi:  \cP_{M,\epsilon}\ni \bmu \mapsto \{\bnu=X_\#\mP:\mP\in R^{\rm opt}\big(\omega^1,\lambda,\bmu\big)\}.
    \end{align}
By Lemma \ref{lemma:Phi:continuity}, it is upper hemicontinuous. Also, noting that the estimates in Lemma \ref{lemma:construct_cvf} and Lemma \ref{lemma:tight} are local for $s<t$, we apply these two lemmas to the measure flow $\bmu$ and any probability measure $\mP\in R(\bB,\bmu)$ to obtain the bound
\begin{align}
    \|(X,\tilde\sigma^0(X))\|_{\bB;\alpha,\beta;m,\infty;[s,t];\mP}\lesssim 1+M^{\gamma''}.
\end{align}
Since $\beta<\alpha$ by our choice, it holds that
\begin{align}\label{invariant:localestimate:nu}
     &\|(X,\tilde\sigma^0(X))\|_{\bB;\beta,\beta';m,\infty;[s,t];\mP}\lesssim 1+M^{\gamma''}\epsilon^{\delta'},\quad t-s<\epsilon
\end{align}
for some $\delta'\in (0,1)$\footnote{The constant 1 appears because the definition of $\|(X,\tilde \sigma^0(X))\|_{\bB;\beta,\beta';m,\infty}$ includes a term $\|\tilde\sigma^0(X)\|_\infty$, which is bounded by a universal constant $\|\tilde\sigma^0\|_\infty$.}. We emphasize that this holds for any $\mP\in R(\bB,\bmu)$, and the hidden constants depend on neither $M$ nor $\epsilon$. Therefore, if we choose $M$ and $\epsilon$ such that $1+M^{\gamma''}\epsilon^{\delta'} \lesssim M$, $(X,\tilde \sigma^0(X))$ can serve as a representation of $\bnu:=X_\#\mP\in\cL(\bD^{\beta,\beta'}_\bB L^{m,\infty})$, and by \eqref{invariant:localestimate:nu} we have $\bnu\in \cP_{M,\epsilon}$. Finally, Kakutani's fixed point theorem (c.f. \cite{Fan-Fixed-Point}) ensures the existence of a pathwise MFE.
\end{proof}

\section{Connections to MFG with Brownian Common Noise}\label{sec:randomization}

Although the main body of this paper studies MFG problems in which the common noise is modeled by a {\it deterministic} rough path, it is natural to investigate connections with the classical setting of {\it randomized} Brownian common noise. Indeed, a natural input for the rough path parameter $\bB$ is the path of a Brownian motion (see Chapter 3, \cite{rough_path_book}), which is itself an important motivation for rough path theory. Some recent studies have pointed out the relationship between RSDE theory (c.f. \cite{RSDE}) and ``doubly stochastic systems"; see, e.g., \cite{randomisation_RSDE} and \cite{controleedRSDE}). However, to our knowledge, there have been very few discussions in the context of mean field games. This section is devoted to establishing such connections.

We begin with notation.

\subsection{Preliminaries and more notations}

The {\it randomization} map is defined by
\begin{align}
   \fB:  \Omega^0\ni \omega^0  \mapsto \bB:=\bigg( B:=\bar B^0(\omega^0), \mB:=\bigg(\int \bar B^0 \otimes d \bar B^0\bigg) (\omega^0)\bigg)\in \hat\sC^{0,\alpha}.\end{align}
which is $\mF^{ B^0}$-adapted in the sense that $\omega^1\mapsto (B_t,\mB_{0,t})$ is $\fC_T/\F^{B^0}_t$-measurable. Here, $\fC_T$ is the Borel $\sigma$-field of $\hat\sC^{0,\alpha}$ generated by the metric $\rho_\alpha$.

In the case of randomized common noise, it is more convenient to consider the {\it joint law} of $(X,\Lambda,W)$ as a random element, not just the law of $X$. Therefore, in this section, we use $\mu$ and $\nu$ to denote probability measures on $\Omega$, and use $\mu^x$ to denote the $X$-marginal (see Definition \ref{def:defs-randomized-common-noise}-1). Consider the probability space of random inputs $\hat\Omega^0:=\hat\sC^{0,\alpha}\times \cP(\Omega)$ with canonical element $(\hat \bB,\hat\mu)$. With a slight abuse of notation, we also consider the probability space $\hat \Omega = \hat \sC^{0,\alpha}\times \cP(\Omega)\times \cP(\Omega)$ with canonical element $(\hat \bB,\hat \mu,\hat \nu)$\footnote{Keep in mind that $\hat \bB$ has two coordinates $\hat\bB = (\hat B,\hat \mB)$.}. Moreover, we lift the Wiener measure $\mP^0\in \cP(\Omega^0)$ to a canonical measure $\mP^0_{\bB}: = \fB_\# \mP^0$ on $\hat \sC^{0,\alpha}$, where we recall that $\fB:\Omega^0\to \hat \sC^{0,\alpha}$ is the randomization map of Brownian rough paths. By the standard construction of $\fB$, the enhanced Brownian motion is an almost surely defined measurable mapping, and its first level is the Brownian motion itself\footnote{For references, see, e.g., Section 3.2.1 of \cite{FrizVictoir2005BrownianRoughPath} and note that It\^o and Stratonovich enhancement only differ by a deterministic term.}. Hence, for every \(t\in[0,T]\), with $(B,\mB)=\fB(B)$,
\[
\sigma\bigl((B_s,\mathbb B_{0,s}):0\le s\le t\bigr)
=
\sigma(B_s:0\le s\le t),
\]
up to completion. Therefore, in what follows we will not distinguish between "conditioning on $\bB$" and "conditioning on $B$".

An important step in our reformulation is to observe that the usual compatibility condition can be reduced to the {\it causality} of certain couplings, which is now a distributional property. Indeed, causality of the couplings ensures that the Brownian motion property (in particular, independence of increments) can be preserved even when we enlarge the filtration. See Appendix \ref{app:causal-coupling} for a similar argument in the pathwise case, and the proof of Proposition \ref{prop:randomized-weak} for its application in the randomized setting. We thus use the following definition in this section:

\begin{definition}\label{def:defs-randomized-common-noise}
    \begin{enumerate}
        \item For $\mu\in \cP(\Omega)$ and $t\in [0,T]$, we consider $\mu^x := X_\#\mu$, $\mu^x_t = (X_t)_\#\mu$, and $\mu_{\cdot \wedge t}:= (X_{\cdot \wedge t}, \Lambda|_{[0,t]\times U}, W_{\cdot \wedge t})_\#\mu$.
        \item On $\hat \Omega^0$, the filtration generated by $\hat \mu$ is defined as follows:
        \begin{align}
        \F^{\hat \mu}_t=\sigma\big( \{\mE^{\hat \mu}[\phi(X_{\cdot \wedge t},\Lambda|_{[0,t]\times U},W_{\cdot \wedge t})]:\phi {\rm\ is\ bounded\ continuous}\}  \big).
        \end{align}
        We can similarly define the filtration generated by measure-valued processes on any probability space supporting such processes (such as $\hat\Omega$ and $\bar\Omega$).
        \item A probability measure $\hat Q$ on $\hat \Omega^0$ is said to be a causal coupling if for any $t\in [0,T]$, $\F^{\hat\mu}_t$ is conditionally independent of $\F^{\hat \bB}_T$, given $\F^{\hat \bB}_t$, under $\hat Q$. We denote $\hat Q\in \cP_\mc(\hat \Omega^0)$ if it is a causal coupling. Similarly, a probability measure $\hat \bP$ on $\hat\Omega$ is said to be a causal coupling, denoted by $\hat \bP\in \cP_\mc(\hat\Omega)$, if for any $t\in [0,T]$, $\F^{\hat\mu,\hat\nu}_t$ is conditionally independent of $\F^{\hat \bB}_T$, given $\F^{\hat \bB}_t$, under $\hat \bP$.
    \end{enumerate}
\end{definition}

For a $\hat\bP\in \cP(\hat \Omega)$, we can rewrite the cost functional as
\begin{align}
    \bJ(\hat\bP): =& \mE^{\bar\bP}\bigg[\mE^{\hat \nu}\bigg[\int_0^T f(t,X_t,\hat \mu^x_t,u)\Lambda_t(du)dt  + g(X_T,\hat \mu^x_T) \bigg]   \bigg]\\
    =&\int_{\hat \Omega}  J_{\rm pw}(\mu,\nu) \hat\bP(d\bB,d\mu,d\nu)
\end{align}
where $\hat \mu^x= X_\#\hat \mu$, and we recall that (see \eqref{cost-pathwise})
\begin{align}
    & J_{\rm pw}(\mu,\nu) := \int_\Omega \Gamma(x,q,\mu)\nu(d\omega),\\
    &\Gamma(x,q,\mu) = \int_0^T f(t,x_t,\mu^x_t,u) q(du,dt) + g(x_T,\mu^x_T).
\end{align}
We will also use the following two operators:
\begin{align}
   &T: \cP_{\mc}(\hat \Omega^0)\to \cP(\Omega\times \hat\Omega^0),\quad \hat Q\mapsto \mu(d\omega)\hat Q(d\bB,d\mu),\\
  & \hat T:\cP_\mc(\hat \Omega) \to \cP(\Omega\times \hat \Omega),\quad \hat \bP \mapsto \nu(d\omega)\hat \bP(d\bB,d\mu,d\nu),
\end{align}
and the following two special couplings on $\hat \Omega$ based on $\hat Q$:
\begin{align}
    &\hat\bP^{\hat Q}_{\rm diag}(d\bB,d\mu,d\nu)=\delta_{\mu}(d\nu)\hat Q(d\bB,d\mu),\\
    &\hat \bP^{\hat Q}_{\rm ind}(d\bB,d\mu,d\nu) = \hat Q_{\bB}(d\mu)\otimes \hat Q_\bB(d\nu) \bar\mP^0_\bB(d\bB).
\end{align}

\begin{remark}
    The consistency condition $\cL(\bar X,\bar \Lambda,\bar W|\bar \bB,\bar\mu)=\bar\mu$, a.s., is implicitly encoded in the definition of $T$. On the other hand, to accommodate the admissible deviations of the representative agent in the definition of MFG problems, we also need to consider the scenario in which the consistency condition is violated. This motivates the extension from $\hat\Omega^0$ to $\hat \Omega$ and the definition of $\hat T$.
\end{remark}

\subsection{A pathwise characterization of weak equilibrium}
The starting point of our discussion is an observation from Section 6.1 of \cite{Lacker-common-noise}: the law of a weak equilibrium is fully determined by the law of $(B,\mu)$. In our case, this is equivalent to specifying a distribution $\hat Q$ on $\hat \Omega^0$.

\begin{definition}\label{weak:MFG:canonical}
\begin{enumerate}
    \item The canonical space for a MFG with Brownian common noise is $(\bar\Omega,\bar\mF)$, where $\bar\Omega:=\Omega\times \hat \Omega^0$ with canonical random elements $(\bar X,\bar \Lambda,\bar W,\bar \bB=(\bar B,\bar\mB),\bar\mu)$ and canonical filtration $\bar\mF$ generated by $(\bar X,\bar\Lambda,\bar W,\bar \bB,\bar\mu)$. With a slight abuse of notations, we also consider the extended canonical space $\Omega\times \hat \Omega$, with canonical random elements $(\bar X,\bar W,\bar W,\bar \bB,\bar \mu,\bar\nu)$.
    \item A probability measure $\hat Q\in\cP(\hat \Omega^0)$ is said to be a weak equilibrium of the MFG with Brownian common noise if the probability measure $\bar\mP:=T(\hat Q)$ (which is on $\bar \Omega$) satisfies
    \begin{enumerate}
        \item $(\bar X_0)_\# \bar\mP=\lambda$;
        \item $\bar X_0, \bar W$ and $(\bar \bB,\bar\mu)$ are independent under $\mP$, and $(\bar W,\bar B)$ are $(\bar \mP,\bar\mF)$ Brownian motions;
        \item The following state equation holds $\bar\mP$-a.s.:
        \begin{align}\label{state-dynamic-common-noise}
              \bar  X_t =\bar  X_0+&\int_0^t \int_U b(s,\bar X_s,\bar \mu^x_s,u)\bar \Lambda_s(du)ds + \int_0^t \sigma(s,\tilde X_s,\tilde \mu^x_s) d \bar W_s \\
            +& \int_0^t \sigma^0(s,\bar X_s,\bar \mu^x_s)d \bar B_s,\quad \forall t\in [0,T].
        \end{align}
        \item For any probability measure $\bar\mP'\in \cP(\bar\Omega)$ satisfying (a)-(c) and such that, for any $t\in [0,T]$, $\F^\Lambda_t$ is conditionally independent of $\F^{\bar X_0,\bar \bB,\bar W,\bar\mu}_T$ given $\F^{\bar X_0,\bar \bB,\bar W,\bar\mu}_t$, we have $J(\bar\mP)\leq J(\bar \mP')$, where
        \begin{align}
        J(\bar\mP) := \mE^{\bar \mP}\bigg[\int_0^T f(t,\bar X_t,\bar\mu^x_t,u)\bar\Lambda_t(du) + g(\bar X_T,\bar \mu^x_T)\bigg]
        \end{align}
        is the aggregate cost functional.
    \end{enumerate}
\end{enumerate}
\end{definition}

In what follows, we prove that the best-response step of MFG problems with Brownian common noise can be recast as an optimization problem over the joint law of $(\hat \bB,\hat\mu,\hat\nu)$ on $\hat \Omega$, under which, almost surely, $\hat\nu$ solves the RSDE with input $(\hat \bB,\hat\mu)$. In this approach, the state process constraint is inherently expressed pathwise, and the use of RSDE theory therefore seems necessary.

\begin{definition}
    For a given $\hat Q \in \cP_{\rm c}(\hat \Omega^0)$, define
    \begin{align}
    \hat \cR(\hat Q):= \big\{\bar \bP \in \cP_{\rm c} (\hat \Omega): (\hat \bB,\hat \mu)_\#\hat\bP = \hat Q, \hat \bP(\hat \nu \in R(\hat\bB,\hat \mu,\lambda))=1\big\},
    \end{align}
    and
    \begin{align}
    \hat\cR^{\rm opt}(\hat Q): = \{\hat \bP_* \in \hat \cR
(\hat Q): \bJ(\hat \bP_*) = \inf_{\hat \bP\in \hat\cR(\hat Q)}\bJ(\hat \bP) \}.
\end{align}
\end{definition}

\begin{remark}
\begin{enumerate}
    \item The pathwise admissible set $R$ also provides a convenient description of mean-field control problems with common noise. Indeed, consider
    \begin{align}
        \hat \cR_{\rm MFC}:=\{\hat Q\in \cP_\mc(\hat \Omega): \hat Q(\hat \mu\in R(\hat \bB,\hat \mu))=1\}.
    \end{align}
    If we can prove $\hat \cR_{\rm MFC}$ is nonempty and compact, then the {\it weak} optimal relaxed control of MFC problems exists. We expect that a large part of the proof of this compactness result relies on our study of the pathwise admissible set $R$. As the focus of this paper is MFG problems, we leave the detailed study for future work. It is also worth noting that $\hat \cR_{\rm MFC}$ is exactly the set of self-consistent admissible laws in the MFG problems (i.e., presolutions), and $\hat \bP^{\hat Q}_{\rm diag}\in \hat \cR(\hat Q)$ if and only if $\hat Q\in \hat\cR_{\rm MFC}$.
\item Another interesting observation regarding MFC problems is that, in Section 3.4 of \cite{DjetePossamaiTan2022McKeanVlasovOptimalControl}, the authors observe (in a more general setting than ours, i.e., $\sigma$ is also controlled) that requiring martingale properties under a {\it pathwise} realization of $\hat \mu$ is essential for establishing approximation results for relaxed controls. It is natural to ask how the rough path (along with RSDE) formulation of MFC and MFG problems extends to the controlled $\sigma$ case and whether it is equivalent to the relaxed formulation in \cite{DjetePossamaiTan2022McKeanVlasovOptimalControl}. We aim to explore this in future work.
\end{enumerate}

\end{remark}

The following randomization lemma is important for the subsequent analysis.

\begin{lemma}\label{lemma:randomization}
    Suppose $\hat Q\in \hat \cR_{\rm MFC}$, $\bar\mP\in \cP(\bar\Omega)$ and $(\bar\bB,\bar\mu)_\#\bar\mP=\hat Q$. Denote $$\hat\bP = \cL^{\bar\mP}\big(\bar\bB,\bar\mu,\cL^{\bar\mP}(\bar X,\bar\Lambda,\bar W|\bar\bB,\bar\mu)\big). $$
    Then, the state equation \eqref{state-dynamic-common-noise} holds $\bar\mP$-a.s. with $\bar X_0\sim \lambda$ if and only if $\hat\bP(\hat\nu\in R(\hat\bB,\hat\mu,\lambda))=1$, i.e., if and only if $\hat\bP\in \hat\cR(\hat Q)$.
\end{lemma}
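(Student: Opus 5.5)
The plan is to prove both directions by disintegrating $\bar\mP$ along $(\bar\bB,\bar\mu)$. Let $K(\bB,\mu;d\omega):=\cL^{\bar\mP}(\bar X,\bar\Lambda,\bar W\,|\,\bar\bB=\bB,\bar\mu=\mu)$, so that $\hat\bP=\cL(\bar\bB,\bar\mu,K(\bar\bB,\bar\mu))$ and $\bar\mP(d\omega,d\bB,d\mu)=K(\bB,\mu;d\omega)\hat Q(d\bB,d\mu)$. The statement then reduces to a pointwise claim: the It\^o state equation \eqref{state-dynamic-common-noise} holds $\bar\mP$-a.s. if and only if, for $\hat Q$-a.e. $(\bB,\mu)$, the conditional law $K(\bB,\mu)$ lies in $R(\bB,\mu,\lambda)$. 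Throughout I use that $\hat Q\in\hat\cR_{\rm MFC}$, so $\hat Q$-a.s. $\mu\in R(\bB,\mu)$. By Proposition \ref{prop:equiv}, $\mu^x$ is then the law of an RSDE solution, hence lies in $\cL(\bD^{\beta,\beta'}_\bB L^{m,\infty},\lambda)$. This makes the controlled vector field $(\tilde\sigma^0,\tilde\sigma')$ of Lemma \ref{lemma:construct_cvf} well defined $\hat Q$-a.s., and I will fix a jointly measurable version in $(\bB,\mu)$. The standing assumption implicit in the conditions is that $(\bar W,\bar B)$ are Brownian and $\bar W$ is independent of $(\bar\bB,\bar\mu)$, as in Definition \ref{weak:MFG:canonical}.

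For the direction ($\Leftarrow$), assume $K(\bB,\mu)\in R(\bB,\mu,\lambda)$ for $\hat Q$-a.e. $(\bB,\mu)$. Proposition \ref{prop:equiv} gives, for $\hat Q$-a.e. fixed $(\bB,\mu)$, that $K(\bB,\mu)$-a.s. the pathwise RSDE \eqref{RSDE-for-PinR} holds with $W$ a $K$-Brownian motion. I will then identify the rough integral $\int(\tilde\sigma^0,\tilde\sigma')\circ(X,\tilde\sigma^0(X))\,d\bB$ evaluated at $\bB=\fB(\bar B^0)$ with the It\^o integral $\int\sigma^0(s,X_s,\mu^x_s)\,dB_s$ on the joint space. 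Two facts make this work. First, the rough integral is a limit in probability of compensated Riemann sums $\sum(\Ti_u\delta B_{u,v}+\Ti'_u\mB_{u,v})$ (Theorems 2.8 and 3.5 of \cite{RSDE}), so it is jointly measurable in $(\omega,\bB,\mu)$. Second, the randomisation results of \cite{randomisation_RSDE} show that, when $B$ is a Brownian motion in the joint filtration, these sums converge to the It\^o integral; the $\mB$-term contributes nothing beyond the It\^o correction because of the special bracket $[\bB]_{s,t}=(t-s)I$. Making the sums converge on the joint space requires that $\bar B$ remain a Brownian motion in the filtration generated by $(\bar X,\bar\Lambda,\bar W,\bar\bB,\bar\mu)$. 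I will obtain this from the causality of $\hat Q$ together with Appendix \ref{app:causal-coupling}, which shows that causal enlargement preserves independent increments.

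For the direction ($\Rightarrow$), assume \eqref{state-dynamic-common-noise} holds $\bar\mP$-a.s. I will rewrite its It\^o integral as a limit of Riemann sums and pass to a subsequence that converges a.s. Conditioning on $(\bar\bB,\bar\mu)$ then yields, for $\hat Q$-a.e. $(\bB,\mu)$, a pathwise equation under $K(\bB,\mu)$ in which the $d\bB$ term is the Riemann-sum limit, that is, a rough integral in the sense of \cite{RSDE}. This requires checking under $K$ that $W$ is a Brownian motion in the filtration $\mF$ and that $X_0$ is independent of $(\Lambda,W)$, both of which follow from the independence of $\bar X_0$, $\bar W$ and $(\bar\bB,\bar\mu)$. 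So, $\hat Q$-a.s., $X$ is a strong $L^{m,\infty}$ solution of an RSDE with the controlled vector field from Lemma \ref{lemma:construct_cvf}. From there the argument of Proposition \ref{lemma:nonemptyness:strong} verifies conditions 2 and 3 of Definition \ref{def:pathwise-relax-control}: Lemma \ref{lemma:isometry} handles the non-law-invariant rough path norms, and the rough It\^o formula gives the martingale property of $M(\phi)$. Hence $K(\bB,\mu)\in R(\bB,\mu,\lambda)$.

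I expect the main obstacle to be the conditioning step. Conditioning an It\^o integral against $\bar B$ on $\bar B$ itself is not innocent, and the identification of It\^o and rough integrals needs $\bar B$ to be a Brownian motion in a filtration that contains $\bar\mu$. I would first try to reduce everything to Riemann sums, where conditioning is harmless because $\bB$ is frozen, and to use the uniform-in-$(\bB,\mu)$ rough path bounds (Corollary \ref{lemma:tight}, localised on sets where $\|\bB\|_\alpha$ is bounded) together with Lemma 4.20 of \cite{RSDE} to get convergence in probability simultaneously under $\bar\mP$ and under $\hat Q$-a.e. $K(\bB,\mu)$.
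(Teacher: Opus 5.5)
Your direction ($\Leftarrow$) is a legitimate alternative to the paper's argument. The paper applies the rough It\^o formula to $\phi(X,W,B)$ under $\hat\nu$, lifts the resulting martingales to $(\bar\mP,\bar\mF)$, and reads off the state equation from a classical martingale problem. You instead identify the rough integral directly with the It\^o integral through compensated Riemann sums: convergence in $K(\bB,\mu)$-probability for a.e.\ $(\bB,\mu)$ integrates to convergence in $\bar\mP$-probability.

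The genuine gap is in ($\Rightarrow$). Passing an a.s.-convergent subsequence of Riemann sums through the disintegration only gives you \emph{some} limit process under $K(\bB,\mu)$. It does not show that $(X,\tilde\sigma^0(X))$ and $(\tilde\sigma^0(X),\hat\sigma'(X))$ lie in $\bD^{\beta,\beta'}_{\bB}L^{m,\infty}_{K(\bB,\mu)}$. That is condition 2 of Definition~\ref{def:pathwise-relax-control}. It is also what you need before you may call the limit a rough stochastic integral, or call $X$ an $L^{m,\infty}$ solution in the sense of \cite{RSDE}. Theorem 4.6 and Proposition 4.5 of \cite{RSDE} only concern processes already in this class, and Corollary~\ref{lemma:tight} is stated for measures in $R(\bB,\bmu)$. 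Using it for $K(\bB,\mu)$ therefore presupposes what you want to prove. Once you condition on the whole path of $B$, the It\^o integral is no longer a martingale, and nothing in your argument controls $\mE^{K(\bB,\mu)}[|\delta X_{s,t}|^m\,|\,\F_s]$.

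The missing idea is to transfer regularity by uniqueness rather than derive it. This is the paper's route:
\begin{enumerate}
\item For each $(\bB,\mu,\nu)$, build the $L^{m,\infty}$ solution of \eqref{eq:mkv-RSDE} from inputs with law $\cL^{\nu}(X_0,\Lambda,W)$. Its law lies in $R(\bB,\mu,\lambda)$ by Proposition~\ref{lemma:nonemptyness:strong}.
\item Use Theorem 4.3 of \cite{randomisation_RSDE} to choose this solution measurably, and to show that after randomising $(\bB,\mu,\nu)\sim\hat\bP$ it solves the It\^o equation \eqref{eq:mkv-common-noise}.
\item Check that the input laws agree with those under $\bar\mP$.
\item Conclude by pathwise uniqueness and a Yamada--Watanabe argument, as in Proposition~\ref{lemma:R:uniqueness}, that the resulting joint law is $\bar\mP$.
\end{enumerate}
Uniqueness of disintegrations then identifies $K(\bB,\mu)$ with the law of the RSDE solution for a.e.\ $(\bB,\mu)$.

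Two hypotheses are also drawn from the wrong source.

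In ($\Leftarrow$), causality of $\hat Q$ does not make $\bar B$ an $(\bar\mP,\bar\mF)$-Brownian motion. The kernel $\hat\nu=K(\hat\bB,\hat\mu)$ is a function of the whole path of $\hat\bB$, and a pathwise relaxed control may legitimately use future values of the frozen path $\bB$. You need causality of $\hat\bP$ itself, i.e.\ $\hat\bP\in\cP_\mc(\hat\Omega)$, which is part of the condition $\hat\bP\in\hat\cR(\hat Q)$.

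In ($\Rightarrow$), mutual independence of $\bar X_0$, $\bar W$ and $(\bar\bB,\bar\mu)$ says nothing about $\bar\Lambda$. So it gives neither that $W$ is an $\mF$-Brownian motion under $K(\bB,\mu)$, nor that $X_0$ is independent of $(\Lambda,W)$ there. For the former you need $\bar W_t-\bar W_s$ to be independent of $\bar\F_s\vee\sigma(\bar\bB,\bar\mu)$. This fails if, for instance, $\bar\Lambda|_{[0,s]\times U}$ carries an independent sign $\epsilon$ and $\bar\mu$ after time $t$ carries $\epsilon\,\mathrm{sign}(\bar W_t-\bar W_s)$. Mutual independence still holds in that example, yet conditionally on $\bar\mu$ the control predicts the sign of the next $W$-increment. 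A compatibility condition of the type in Definition~\ref{weak:MFG:canonical}(d), together with independence of $\bar\Lambda$ from $\bar X_0$ given $(\bar\bB,\bar\mu)$, is required. The statement leaves both implicit.
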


\newcommand{\bbarP}{\overline{\overline{\mP}}}

\begin{proof}
    {\it Sufficiency.} For convenience, we denote $\bbarP:=\hat T(\hat\bP)$ so that $\bar\mP=\cL^{\bbarP}(\bar X,\bar\Lambda,\bar W,\bar\bB,\bar\mu)$. Suppose $\hat\bP(\hat\nu\in R(\hat\bB,\hat\mu,\lambda))=\bbarP(\bar\nu\in R(\bar \bB,\bar\mu))=1$. Recalling Lemma \ref{prop:equiv} and invoking the rough It\^o formula for the triplet $(\bar X,\bar W,\bar \bB)$, we know that for $\bbarP$-a.s. $(\bar \bB,\bar\mu,\bar\nu)$ and any test function $\phi\in C_0^\infty(\mR^{d+n+l})$,
    \begin{align}\label{martingale:XWB}
     M^{\bar\bB,\bar\mu}_t(\phi):= &\phi(X_t,W_t,\bar B_t)-\phi(X_0,W_0,\bar B_0)-\int_0^t \int_U \overline \mL\phi(s,X_s,W_s,\bar B_s,\bar\mu_s,u)\bar\Lambda_s(du)\\
     &-\int_0^t \big(\bar \Ti_s(\phi),\bar\Ti'_s(\phi)\big) d\bar \bB_s, \quad \forall t\in [0,T]
    \end{align}
is a $(\bar \nu,\mF)$-martingale (defined on $\Omega$), with the extended generator
\begin{align}
         \overline\mL \phi(t,x,w,\mb,\mu,u):=& \overline b(t,x,\mu,u)^\mt \nabla \phi(x,w,\mb)+ \frac{1}{2}\Tr\bigg(\overline a(t,x,\mu)\nabla^2\phi(x,w,\mb)\bigg)
    \end{align}
    and
    \begin{align}
        \overline b(t,x,\mu,u)=\left(\begin{array}{c}
             b(t,x,\mu,u)  \\
              0 \\
              0
        \end{array}\right), \quad \overline a(t,x,\mu) = \left(\begin{array}{cc}
             \sigma(t,x,\mu) & \sigma^0(t,x,\mu) \\
              I_{n\times n} & 0\\
              0 & I_{l\times l}
        \end{array}   \right)\left(\begin{array}{cc}
             \sigma(t,x,\mu) & \sigma^0(t,x,\mu) \\
              I_{n\times n} & 0\\
              0 & I_{l\times l}
        \end{array}   \right)^\mt.
    \end{align}
Here, similarly to \eqref{eq:Ti-def}-\eqref{eq:Tip-def}, we denote
\begin{align}
    &\bar \Ti_t(\phi) :=\nabla_x \phi_t\tilde \sigma_t^0(X_t)+\nabla_b\phi_t, \\
	&\Ti'_t(\phi) :=\nabla^2_x\phi_t(\tilde \sigma^0_t(X_t),\tilde\sigma^0_t(X_t))+\Tr(\nabla^2_b\phi_t) +\nabla_{xb}\phi_t(\tilde \sigma^0_t(X_t),I)+\nabla_{bx}(I,\tilde \sigma^0_t(X_t))+\nabla_x\phi_t\hat\sigma'_t(X_t),
\end{align}
where, for any differential operator $D$, we abbreviate $D\phi_t:=D\phi(X_t,W_t,\bar B_t)$ and replace the input flow $\{\mu_t\}_{t\in [0,T]}$ with $\{\bar \mu^x_t\}_{t\in [0,T]}$. Define a process $\bar M_t(\bar\omega) := M^{\bar\bB(\bar\omega),\bar\mu(\bar\omega)}_t(\omega)$. We claim that $\bar M$ is a $(\bar\mP,\bar\mF)$ martingale. Indeed, for any $t>s$ and bounded continuous $\psi$,
\begin{align}
\mE^{\bar\mP}[\bar M_t(\phi)\psi(\bar X_{\cdot\wedge s},\bar \Lambda|_{[0,s]\times U},\bar W_{\cdot \wedge s},\bar \bB_{\cdot\wedge s},\bar\mu_{\cdot\wedge s})]=&\mE^{\bbarP}[\bar M_t(\phi)\psi(\bar X_{\cdot\wedge s},\bar \Lambda|_{[0,s]\times U},\bar W_{\cdot \wedge s},\bar \bB_{\cdot\wedge s},\bar\mu_{\cdot\wedge s})]\\
=&\mE^{\hat\bP}\big[ \mE^{\hat \nu}[M^{\hat \bB,\hat\mu}_t(\phi)\psi(X_{\cdot\wedge s},\Lambda|_{[0,s]\times U},W_{\cdot \wedge s},\hat \bB_{\cdot\wedge s},\hat\mu_{\cdot\wedge s})]     \big]\\
=&\mE^{\hat\bP}\big[ \mE^{\hat \nu}[M^{\hat \bB,\hat\mu}_s(\phi)\psi(X_{\cdot\wedge s},\Lambda|_{[0,s]\times U},W_{\cdot \wedge s},\hat \bB_{\cdot\wedge s},\hat\mu_{\cdot\wedge s})]     \big]\\
=&\mE^{\bar\mP}[\bar M_s(\phi)\psi(\bar X_{\cdot\wedge s},\bar \Lambda|_{[0,s]\times U},\bar W_{\cdot \wedge s},\bar \bB_{\cdot\wedge s},\bar\mu_{\cdot\wedge s})].
\end{align}
On the other hand, because $\bar B$ is a $(\bar\mP,\bar\mF)$ Brownian motion (see the proof of Proposition \ref{prop:randomized-weak} below), the rough integral in \eqref{martingale:XWB} becomes an It\^o integral under $\bar\mP$, hence also a $(\bar\mP,\bar\mF)$ martingale. Therefore, we conclude that
\begin{align}
    \bar N_t: = \phi(\bar X_t,\bar W_t,\bar B_t)-\phi(\bar X_0,\bar W_0,\bar B_0)-\int_0^t \int_U \overline \mL\phi(s,\bar X_s,\bar W_s,\bar B_s,\bar\mu_s,u)\bar\Lambda_s(du)
\end{align}
is a $(\bar\mP,\bar\mF)$ martingale. By a standard result, this verifies the state equation \eqref{state-dynamic-common-noise}.

    {\it Necessity.} Suppose that the state equation \eqref{state-dynamic-common-noise} holds $\bar\mP$-a.s.. We first observe that $\bar\mP(\bar\mu^x \in \cL(\bD_{\bar \bB}^{\beta,\beta'}L^{m,\infty}))=\hat Q(\hat\mu^x \in \cL(\bD_{\hat \bB}^{\beta,\beta'}L^{m,\infty}))=1$, by the definition of $\hat \cR_{\rm MFC}$. On the other hand, for {\it any} $(\bB,\mu,\nu)\in \hat \sC^{0,\alpha}\times \cP(\Omega)\times \cP(\Omega)$ such that $\mu^x \in \cL(\bD_{ \bB}^{\beta,\beta'}L^{m,\infty})$, construct a probability space $(\tilde \Omega,\tilde \mP)$ with random variables $(\tilde \xi, \tilde \Lambda,\tilde W)$ such that $\cL^{\tilde \mP}(\tilde\xi, \tilde \Lambda, \tilde W) = \cL^{\nu}(X_0,\Lambda, W)$. Consider the following RSDE:
    \begin{align}
\begin{cases}
d \tilde X_t = & \bar b(t,\tilde X_t,\mu^x_t,\tilde\omega)dt +  \sigma(t,\tilde X_t,\mu^x_t)d\tilde W_t\\
&+\big(\tilde \sigma^0_t(\tilde X_t), \tilde \sigma'_t(\tilde X_t)\big)d\bB,\\
\tilde X_0 =& \tilde \xi.
\end{cases}
\label{eq:mkv-RSDE}
\end{align}
Under Assumption \ref{ass:}, there exists a unique $L_{m,\infty}$ solution $\tilde X^{\bB,\mu,\nu}$ to \eqref{eq:mkv-RSDE}. Moreover, by Theorem 4.3 of \cite{randomisation_RSDE} and the discussion thereafter, we can choose a progressively measurable version $(\bB,\mu,\nu)\mapsto \cL^{\tilde \mP}(\tilde X^{\bB,\mu,\nu})$, and if $(\hat \bB,\hat\mu,\hat\nu)$ is a random variable with distribution $\hat\bP$, $\tilde X^{\hat \bB,\hat\mu,\hat\nu}$ solves the following SDE $\tilde\mP$-a.s. (we still denote by $\tilde \mP$ the extended probability measure, obtained by augmenting $\tilde \mP$ with the randomness of $(\hat\bB,\hat\mu,\hat\nu)$):
    \begin{align}
\begin{cases}
d \tilde X_t =& \bar b(t,\tilde X_t,\hat\mu^x,\tilde\omega)dt +  \sigma(t,\tilde X_t,\hat\mu^x)d\tilde W_t\\
&+ \sigma^0(t,\tilde X_t,\hat\mu^x)d\tilde B_t,\\
\tilde X_0 =&\tilde \xi.
\end{cases}
\label{eq:mkv-common-noise}
\end{align}
Moreover, for any bounded continuous function $\phi$ defined on $\mR^d\times \cQ\times \cW $, we have
\begin{align}
\mE^{\tilde \mP}[\phi(\tilde X_0,\tilde \Lambda,\tilde W)] = &\mE^{\hat \bP}\big[\mE^{\hat \nu}[\phi(X_0,\Lambda,W)]\big]\\
=&\mE^{\hat Q}[\mE^{\bar\mP}[\phi(\bar X_0,\bar\Lambda,\bar W)|\hat \bB,\hat \mu]]\\
=&\mE^{\bar\mP}[\phi(\barX_0,\bar\Lambda,\bar W)],
\end{align}
thus $\cL^{\tilde \mP}(\tilde X_0,\tilde \Lambda,\tilde W) = \cL^{\bar\mP}(\bar X_0,\bar\Lambda,\bar W)$. Denoting $\bar \mP':= \cL^{\tilde \mP}(\tilde X,\tilde \Lambda,\tilde W,\hat \bB,\hat \mu)$, by a similar Yamada-Watanabe type argument in Proposition \ref{lemma:R:uniqueness}, the strong uniqueness of SDE \eqref{eq:mkv-common-noise} (with random coefficients) gives $\bar \mP'=\bar\mP$. The uniqueness of the conditional kernel then implies that, for some $\Theta$ with $\mP^0(\Theta)=1$, it holds that $\cL^{\bar\mP}(\bar X,\bar \Lambda,\bar W|\bar\bB,\bar\mu)$ is the law of solution to \eqref{eq:mkv-RSDE}, $\forall \bar\omega\in \Theta$. The desired result then follows from the definition of $\hat\bP$.
\end{proof}

We now provide an equivalent characterization of weak MFG equilibrium. It crucially relies on our pathwise formulation and seems to be new for the problems with common noise.

\begin{proposition}\label{prop:randomized-weak}
    $\hat Q\in \hat \cR_{\rm MFC}$ is a weak equilibrium of the MFG with Brownian common noise if and only if $\hat \bP^{\hat Q}_{\rm diag}\in \hat \cR^{\rm opt}(\hat Q)$.
\end{proposition}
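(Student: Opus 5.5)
The proof will set up a dictionary between probability measures $\bar\mP'$ on $\bar\Omega$ that are admissible in Definition \ref{weak:MFG:canonical}(d) and couplings $\hat\bP\in\hat\cR(\hat Q)$. Under this dictionary the two costs agree, $J(\bar\mP')=\bJ(\hat\bP)$, and $\bar\mP=T(\hat Q)$ corresponds exactly to $\hat\bP^{\hat Q}_{\rm diag}$. Once this is in place, optimality of $T(\hat Q)$ among the admissible $\bar\mP'$ is the same statement as optimality of $\hat\bP^{\hat Q}_{\rm diag}$ in $\hat\cR(\hat Q)$.

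\emph{Map from $\bar\mP'$ to $\hat\bP$.} Given $\bar\mP'$ satisfying (a)--(c) together with the conditional independence in (d), set $\hat\bP:=\cL^{\bar\mP'}\big(\bar\bB,\bar\mu,\cL^{\bar\mP'}(\bar X,\bar\Lambda,\bar W\,|\,\bar\bB,\bar\mu)\big)$. Condition (b) gives $(\bar\bB,\bar\mu)_\#\bar\mP'=\hat Q$, where I use that $\hat Q\in\hat\cR_{\rm MFC}$ fixes this joint law. Lemma \ref{lemma:randomization} then yields $\hat\bP(\hat\nu\in R(\hat\bB,\hat\mu,\lambda))=1$ from the state equation (c). To finish membership in $\hat\cR(\hat Q)$, I still need $\hat\bP\in\cP_\mc(\hat\Omega)$, i.e.\ that $\F^{\hat\mu,\hat\nu}_t$ is conditionally independent of $\F^{\hat\bB}_T$ given $\F^{\hat\bB}_t$. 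Since $\bar B$ is an $\bar\mF$-Brownian motion, $\bar\mF_t$ is independent of the increments of $\bar B$ after $t$. Moreover, $\hat\nu_{\cdot\wedge t}$ is the conditional law of $(\bar X,\bar\Lambda,\bar W)_{\cdot\wedge t}$ given $(\bar\bB,\bar\mu)$. The conditional independence in (d) lets me replace conditioning on all of $\bar\bB$ by conditioning on $\bar\bB_{\cdot\wedge t}$ and $\bar\mu$. Combining these shows that $\hat\nu_{\cdot\wedge t}$ is $\F^{\hat\bB,\hat\mu}_t$-measurable up to null sets, and causality then follows from causality of $\hat Q$. The cost identity $J(\bar\mP')=\bJ(\hat\bP)$ is a direct disintegration using $J_{\rm pw}$. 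In the special case $\bar\mP'=T(\hat Q)$, the conditional law is $\bar\mu$ itself, so the image is $\hat\bP^{\hat Q}_{\rm diag}$.

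\emph{Converse map.} Given $\hat\bP\in\hat\cR(\hat Q)$, set $\bar\mP':=\cL\big(\bar X,\bar\Lambda,\bar W,\bar\bB,\bar\mu\big)$ under $\hat T(\hat\bP)$. The sufficiency direction of Lemma \ref{lemma:randomization} gives (a) and (c). For (b) and (d), I need that $\bar W$ and $\bar B$ are $\bar\mF$-Brownian motions with the stated independence, and that $\bar\Lambda$ is conditionally non-anticipative. Here I plan to use the causality of $\hat\bP$ in combination with the pathwise fact that $W$ is a Brownian motion under each $\hat\nu$ independent of $X_0$ (Remark \ref{martignale-condition-special-case}), via the causal-coupling lemmas of Appendix \ref{app:causal-coupling}. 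The argument is L\'evy's characterization: $\bar B$ has independent future increments given $\bar\mF_t$ because $\F^{\hat\mu,\hat\nu}_t$ is conditionally independent of $\F^{\hat\bB}_T$ given $\F^{\hat\bB}_t$, and $\bar W$ is a martingale by the martingale problem for $\hat\nu$. Costs again agree.

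With both directions, the proof closes as follows. If $\hat\bP^{\hat Q}_{\rm diag}$ is optimal in $\hat\cR(\hat Q)$, then any admissible $\bar\mP'$ maps into $\hat\cR(\hat Q)$ with equal cost, so $J(T(\hat Q))\le J(\bar\mP')$; admissibility of $T(\hat Q)$ itself comes from the converse map applied to $\hat\bP^{\hat Q}_{\rm diag}$, which lies in $\hat\cR(\hat Q)$ because $\hat Q\in\hat\cR_{\rm MFC}$. If instead $\hat Q$ is a weak equilibrium, every $\hat\bP\in\hat\cR(\hat Q)$ yields an admissible $\bar\mP'$ with $\bJ(\hat\bP)=J(\bar\mP')\ge J(T(\hat Q))=\bJ(\hat\bP^{\hat Q}_{\rm diag})$, so $\hat\bP^{\hat Q}_{\rm diag}$ is optimal.

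I expect the main obstacle to be the filtration bookkeeping: showing that causality of $\hat\bP$ on $\hat\Omega$ is equivalent to the compatibility conditions (b) and (d) on $\bar\Omega$. In particular, one must check that $\bar B$ remains Brownian in the enlarged filtration that includes $\bar\Lambda$. I would handle this via the conditional-independence characterization of compatibility, as in Lacker's work and Appendix \ref{app:causal-coupling}.
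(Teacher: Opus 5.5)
Your overall structure follows the paper. You lift an admissible $\bar\mP'$ to $\cL^{\bar\mP'}(\bar\bB,\bar\mu,\cL^{\bar\mP'}(\bar X,\bar\Lambda,\bar W|\bar\bB,\bar\mu))$ using the necessity half of Lemma \ref{lemma:randomization}, push a $\hat\bP\in\hat\cR(\hat Q)$ down through $\hat T$ using the sufficiency half, and match costs.

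\textbf{Main gap: the push-down.} Lemma \ref{lemma:randomization} is a statement about the conditional-law lift of a given $\bar\mP$. For a general $\hat\bP\in\hat\cR(\hat Q)$, the lift of $\bar\mP':=\hat T(\hat\bP)\circ(\bar X,\bar\Lambda,\bar W,\bar\bB,\bar\mu)^{-1}$ is not $\hat\bP$. It is $\hat\bP'=\cL^{\hat\bP}(\hat\bB,\hat\mu,\mE^{\hat\bP}[\hat\nu|\hat\bB,\hat\mu])$, because $\hat\nu$ may carry randomness beyond $(\hat\bB,\hat\mu)$. So your two maps are not mutually inverse, and you cannot simply quote the sufficiency half for $\bar\mP'$. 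The paper inserts exactly this averaging step. Convexity of $R(\bB,\mu)$ under mixtures gives $\hat\bP'\in\hat\cR(\hat Q)$. The lemma then yields the state equation for $\bar\mP'$. Linearity of $J_{\rm pw}$ in $\nu$ gives $\bJ(\hat\bP)=\bJ(\hat\bP')=J(\bar\mP')$. Alternatively, you could argue that the sufficiency proof only integrates the pathwise martingale property of $M^{\hat\bB,\hat\mu}(\phi)$ against test functions not involving $\hat\nu$, so it survives mixing. That would be a re-proof, though, not a citation.

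\textbf{Causality in the lift.} Here you are more careful than the paper, which reads membership of the lift in $\hat\cR(\hat Q)$ straight off Lemma \ref{lemma:randomization}. Your conclusion that $\hat\nu_{\cdot\wedge t}$ is $\F^{\hat\bB,\hat\mu}_t$-measurable is correct, but the justification you give is not the operative one. What is actually needed is:
\begin{itemize}
\item the independence of $(\bar X_0,\bar W)$ from $(\bar\bB,\bar\mu)$ in (b);
\item the compatibility in (d) relative to $\F^{\bar X_0,\bar\bB,\bar W,\bar\mu}_t$, which removes the future of $\bar\mu$ and not only of $\bar\bB$ (measurability with respect to $\bar\bB_{\cdot\wedge t}$ and the whole of $\bar\mu$ would not suffice);
\item strong uniqueness, to write $\bar X_{\cdot\wedge t}$ as a functional of the inputs up to time $t$.
\end{itemize}

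\textbf{Causality in the push-down: this step fails.} Causality of $\hat\bP$ cannot give that $\bar\Lambda$ is compatible with $(\bar X_0,\bar\bB,\bar W,\bar\mu)$. Membership in $\cP_\mc(\hat\Omega)$ is only relative to $\F^{\hat\bB}$. So $\hat\nu_{\cdot\wedge t}$ may depend on randomness in $\hat\mu$ that is independent of $\hat\bB$ and revealed only after $t$. The resulting $\bar\mP'$ is then not an admissible competitor in condition (d) of Definition \ref{weak:MFG:canonical}. The paper's proof only re-checks (b) at this point. This step therefore needs either an additional argument, or causality relative to $\F^{\hat\bB,\hat\mu}$ built into the definition of $\hat\cR(\hat Q)$.

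\textbf{Minor point.} The identity $(\bar\bB,\bar\mu)_\#\bar\mP'=\hat Q$ follows neither from (b) nor from $\hat Q\in\hat\cR_{\rm MFC}$. It has to be part of the class of deviations allowed in (d), as the paper tacitly assumes.
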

\begin{proof}
{\it Sufficiency.} For simplicity, in this proof let us denote $\hat\bP_* = \hat\bP^{\hat Q}_{\rm diag}$ and $\bar\mP = T(\hat Q)$. To verify conditions in Definition \ref{weak:MFG:canonical}, we will repeatedly use the consistency condition $\cL^{\bar \mP}(\bar X,\bar \Lambda,\bar W|\bar \bB,\bar\mu) = \bar\mu$. First, for any $A\in \B(\mR^d)$, we have $\bar\mP(\bar X_0\in A) = \mE^{\hat Q}[\bar\mP_{\bar\bB,\bar\mu}(\bar X_0\in A)] = \mE^{\hat Q}[\hat\mu(\bar X_0\in A)]=\lambda (A)$, because under $\hat Q$, $\hat\mu\in R(\hat\bB,\hat \mu,\lambda)$, hence $(X_0)_\#\hat \mu = \lambda$, holds almost surely. Similarly, the distribution of $\bar W$ under $\bar\mP$ is $\mP_W$, the Wiener measure on $\cW(=C([0,T];\mR^n))$. To verify the independence condition in (b), take arbitrary bounded continuous functions $\phi_\bB,\phi_{X_0}$ and $\phi_W$, defined on appropriate spaces respectively, and note that
\begin{align}
   \mE^{\bar\mP}[\phi_\bB(\bar\bB)\phi_{X_0}(\bar X_0)\phi_W(\bar W)]=& \mE^{\hat Q}\big[\phi_\bB(\hat \bB)\mE^{\hat \mu}[\phi_{X_0}(X_0)]\mE^{\hat \mu}[\phi_W(W)]\big] \\
   =&\mE^{\hat Q}[\phi_\bB(\hat \bB)]\mE^\lambda[\phi_{X_0}(X_0)]\mE^{\mP_W}[\phi_W(W)]\\
   =&\mE^{\bar \mP}[\phi_\bB(\bar\bB)]\mE^{\bar\mP}[\phi_{X_0}(\bar X_0)]\mE^{\bar\mP}[\phi_W(\bar W)].
\end{align}
This is the independence in condition (b) of Definition \ref{weak:MFG:canonical}. Next, to verify that $(\bar W, \bar B)$ are Brownian motions under $(\bar\mP,\bar\mF)$, we only need independence of increments. For $\bar W$, take bounded continuous functions $\phi$, $\psi_1$, $\psi_2$, and use the conditioning argument as before to conclude that, for any $t>s$,
\begin{align}
&\mE^{\bar\mP}[\phi(\bar W_t-\bar W_s)\psi_1(\bar X_{\cdot \wedge s},\bar\Lambda|_{[0,s]\times U})\psi_2(\bar \bB_{\cdot \wedge s},\bar \mu_{\cdot \wedge s} )]\\=&\mE^{\hat Q}\big[\mE^{\hat \mu}[\phi(W_t-W_s)]\cdot \mE^{\hat \mu}[\psi_1(X_{\cdot \wedge s},\Lambda|_{[0,s]\times U})]\psi_2(\hat \bB_{\cdot \wedge s},\hat \mu_{\cdot \wedge s} )\big]\\
=&\mE^{\mP_W}[\phi(W_t-W_s)]\mE^{\bar\mP}[\psi_1(X_{\cdot \wedge s},\Lambda|_{[0,s]\times U})\psi_2(\hat \bB_{\cdot \wedge s},\hat \mu_{\cdot \wedge s} )]\\
=&\mE^{\bar\mP}[\phi(W_t-W_s)]\mE^{\bar\mP}[\psi_1(\bar X_{\cdot \wedge s},\bar\Lambda|_{[0,s]\times U})\psi_2(\bar \bB_{\cdot \wedge s},\bar \mu_{\cdot \wedge s} )].
\end{align}
Thus $\bar W_t-\bar W_s$ is independent of $\bar\F_s$ under $\bar\mP$. To prove the independence of increment of $\bar B$, we observe that, for any bounded continuous $\psi$ and any $s\in [0,T]$, $\mE^{\hat \mu}[\psi(X_{\cdot \wedge s},\Lambda|_{[0,s]\times U})]$ is $\F^{\hat\mu}_s$ measurable. Therefore, because $\hat Q$ is a causal coupling between $\hat \bB$ and $\hat \mu$, we have, for bounded continuous $\phi$, $\psi_1$, $\psi_2$,
\begin{align}
    &\mE^{\bar\mP}[\phi(\bar B_t-\bar B_s)\psi_1(\bar X_{\cdot \wedge s},\bar\Lambda|_{[0,s]\times U},\bar W_{\cdot \wedge s})\psi_2(\bar \mu_{\cdot \wedge s} )]\\=&\mE^{\hat Q}\big[\phi(\hat B_t-\hat B_s) \mE^{\hat \mu}[\psi_1(X_{\cdot \wedge s},\Lambda|_{[0,s]\times U},W_{\cdot \wedge s})]\psi_2(\hat \mu_{\cdot \wedge s} )\big]\\
=&\mE^{\hat Q}\Big[\mE^{\hat Q}[\phi (\hat B_t-\hat B_s)|\F^{\hat \bB}_s ]\cdot\mE^{\hat Q}\big[\mE^{\hat \mu}[\psi_1(X_{\cdot \wedge s},\Lambda|_{[0,s]\times U},W_{\cdot \wedge s})]\psi_2(\hat \mu_{\cdot \wedge s} )\big|\F^{\hat \bB}_s\big]\Big]\\
=&\mE^{\bar\mP}[\phi(\bar B_t-\bar B_s)]\mE^{\bar\mP}[\psi_1(\bar X_{\cdot \wedge s},\bar \Lambda|_{[0,s]\times U},\bar W_{\cdot \wedge s})\psi_2(\bar \mu_{\cdot \wedge s} )].
\end{align}
Therefore $\bar B_t-\bar B_s$ is independent of $\bar\F_s$ under $\bar\mP$, completing the proof of condition (b) in Definition \ref{weak:MFG:canonical}. Condition (c) readily follows from the sufficiency part of Lemma \ref{lemma:randomization}, considering $\hat\bP = \hat \bP^{\hat Q}_{\rm diag}$.

We now verify condition (d). Indeed, $\tilde \mP\circ(\tilde X,\tilde\Lambda,\tilde W,\tilde \bB,\tilde\mu)^{-1} = \mu(d\omega)\hat Q(d\bB,d\mu)$, which in particular implies $\cL^{\tilde\mP}(\tilde X,\tilde \Lambda,\tilde W|\tilde \bB,\tilde \mu)=  \tilde \mu$, $\tilde \mP$-a.s.. Constraining to the space $\bar\Omega$, we obtain the consistency condition $\bar \mu = \cL^{\bar \mP}(\bar X,\bar  \Lambda,\bar W| \bar B, \bar \mu)$. Finally, for an arbitrary $\bar\mP'\in \hat \cR(\hat Q)$, we conclude from the necessity part of Lemma \ref{lemma:randomization} that $$\bar \bP' := \bar\mP'\circ(\bar X,\bar \Lambda,\bar W,\bar\bB,\bar\mu,\cL^{\bar\mP'}(\bar X,\bar \Lambda,\bar W|\bar \bB,\bar\mu))^{-1}\in \hat \cR(\hat Q).$$ Therefore, by the optimality of $\bar \bP_*$, we have $J(\bar \mP') = \bJ(\bar \bP')\geq \bJ(\bar\bP_*)=J(\bar\mP)$, yielding the optimality in the definition of weak equilibrium.

    {\it Necessity.} Suppose $\hat Q\in \hat \cR_{\rm MFC}$, hence $\hat \bP^{\hat Q}_{\rm diag}\in \hat \cR(\hat Q)$. For any $\hat \bP\in \hat \cR(\hat Q)$, consider $\hat \bP' = \cL^{\hat \bP}(\hat \bB,\hat \mu,\mE^{\hat\bP}[\hat \nu|\hat \bB,\hat \mu])$. The convexity of the pathwise admissible set $R(\bB,\mu)$ implies that $\hat \bP'\in \hat \cR(\hat Q)$. On the other hand, denoting $\bar\mP=\hat T(\hat\bP)\circ (\bar X,\bar \Lambda,\bar W,\bar\bB,\bar\mu)^{-1}$, we may verify $\hat \bP' = \cL^{\bar\mP}(\bar\bB,\bar\mu,\cL^{\bar\mP}(\bar X,\bar \Lambda,\bar W|\bar\bB,\bar\mu))$ as follows: for any bounded continuous function $\phi$ defined on $\hat \sC^{0,\alpha}\times \cP(\Omega)\times \cP(\Omega)$ of the type $\phi(\bB,\mu,\nu) = \phi_0(\bB,\mu,\omega)\nu(d\omega)$, we have
    \begin{align}
    \mE^{\hat \bP'}[\phi(\hat \bB,\hat\mu,\hat\nu)]=&\mE^{\hat\bP}\big[\phi(\hat \bB,\hat\mu,\mE^{\hat\bP}[\hat\nu|\hat\bB,\hat\mu])\big]   \\
    =&\mE^{\hat Q}\bigg[\mE^{\hat\bP}\bigg[\int_\Omega \phi_0(\bB,\mu,\omega)\hat\nu(d\omega)\bigg|\hat \bB,\hat\mu\bigg]\bigg]\\
    =&\mE^{\hat \bP}\bigg[\int_\Omega\phi_0(\hat\bB,\hat \mu,\omega)\hat\nu(d\omega)\bigg]\\
    =&\mE^{\hat T(\hat \bP)}[\phi_0(\bar\bB,\bar\mu,\bar X,\bar\Lambda,\bar W)]\\
    =&\mE^{\bar\mP}[\phi_0(\bar\bB,\bar\mu,\bar X,\bar\Lambda,\bar W)]\\
    =&\mE^{\bar\mP}\big[\phi\big(\bar\bB,\bar\mu,\cL^{\bar\mP}(\bar X,\bar\Lambda,\bar W|\bar\bB,\bar\mu)\big)\big].
    \end{align}
    Therefore, by the sufficiency part of Lemma \ref{lemma:randomization}, $\bar\mP$ solves the state equation \eqref{state-dynamic-common-noise}. By a similar argument regarding causal coupling in the sufficiency part of this proof, $\bar\mP$ also verifies condition (b) of Definition \ref{weak:MFG:canonical}-2. Denoting $\bar\mP_* = T(\hat Q)$, the optimality in the weak equilibrium condition implies that $J(\bar\mP_*)\leq J(\bar\mP)$. Because $J_{\rm pw}$ is linear in $\nu$, it is not hard to verify $\bJ(\hat\bP)=\bJ(\hat\bP')$, and consequently,
    \begin{align}
    \bJ(\hat\bP^{\hat Q}_{\rm diag})=J(\bar\mP_*)\leq J(\bar\mP) = \bJ(\hat \bP')=\bJ(\hat\bP).
    \end{align}
    Since $\hat\bP$ was arbitrary, $\hat\bP^{\hat Q}_{\rm diag}\in \hat\cR^{\rm opt}(\hat Q)$, completing the proof.
\end{proof}

\subsection{An existence proof of strong equilibrium}

Using the pathwise characterization from the previous subsection, we give an alternative proof of the existence of strong equilibrium under an additional structural assumption (see Assumption \ref{ass:strong}). We establish this result without requiring uniqueness of the best response under {\it any} environment (see Assumption U.4 of \cite{Lacker-common-noise}), and without first establishing pathwise uniqueness of MFG equilibrium. The price to pay is that we assume a strict version of the Lasry-Lions monotonicity condition, in which equality identifies the relevant measures.
\begin{assumption}\label{ass:strong}
    \begin{enumerate}
        \item The state dynamics do not have mean-field interaction, i.e., $b$, $\sigma$ and $\sigma^0$ do not depend on $\mu$;
        \item For any $\mu,\nu \in \cP(\Omega)$, we have
        \begin{align}\label{LL-monotonicity}
            \int_\Omega \big(\Gamma(x,q,\mu)-\Gamma(x,q,\nu)\big)(\mu-\nu)(d\omega) \geq 0,
        \end{align}
        and the equality in \eqref{LL-monotonicity} implies $\mu^x = \nu^x$.
    \end{enumerate}
\end{assumption}

\begin{proposition}
    Under Assumption \ref{ass:strong}, any weak equilibrium is in fact strong.
\end{proposition}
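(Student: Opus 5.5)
The plan is to combine the characterization of Proposition \ref{prop:randomized-weak} with a Lasry--Lions type comparison between the diagonal coupling $\hat\bP^{\hat Q}_{\rm diag}$ and the independent coupling $\hat\bP^{\hat Q}_{\rm ind}$. Here ``strong'' means that $\hat\mu^x$ is $\hat Q$-a.s.\ a measurable function of $\hat\bB$, i.e.\ $\hat\mu^x=\mE^{\hat Q}[\hat\mu^x|\hat\bB]$, equivalently that the conditional law $\hat Q_\bB$ of $\hat\mu^x$ given $\hat\bB$ is a Dirac mass. Let $\hat Q\in\hat\cR_{\rm MFC}$ be a weak equilibrium. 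By Proposition \ref{prop:randomized-weak}, $\hat\bP^{\hat Q}_{\rm diag}\in\hat\cR^{\rm opt}(\hat Q)$.

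\textbf{Step 1: the independent coupling is admissible.} I will show that $\hat\bP^{\hat Q}_{\rm ind}\in\hat\cR(\hat Q)$. Under item 1 of Assumption \ref{ass:strong}, the coefficients $b,\sigma,\sigma^0$ do not depend on $\mu$, so the controlled vector field of Lemma \ref{lemma:construct_cvf} is $(\sigma^0,0)$ and $R(\bB,\mu)=R(\bB)$ does not depend on $\mu$. Since $\hat\mu\in R(\hat\bB)$ holds $\hat Q$-a.s., an independent copy $\hat\nu\sim\hat Q_\bB$ also satisfies $\hat\nu\in R(\hat\bB)$ almost surely. Its $(\hat\bB,\hat\mu)$-marginal is $\hat Q$. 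Causality follows because, given $\F^{\hat\bB}_T$, $\hat\mu$ and $\hat\nu$ are conditionally i.i.d., and each is causal with respect to $\hat\bB$. I expect this to be checked through conditional independence of $\F^{\hat\mu,\hat\nu}_t$ and $\F^{\hat\bB}_T$ given $\F^{\hat\bB}_t$, using that the conditional law of the pair given $\F^{\hat\bB}_T$ is a product of kernels, each $\F^{\hat\bB}_t$-measurable on $\F_t$-events.

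\textbf{Step 2: symmetrization.} Optimality gives $\bJ(\hat\bP_{\rm diag})\le\bJ(\hat\bP_{\rm ind})$, that is,
\begin{align}
\mE\Big[\int\Gamma(\cdot,\mu)\,d\mu\Big]\le \mE\Big[\int\Gamma(\cdot,\mu)\,d\nu\Big],
\end{align}
where $(\mu,\nu)$ are conditionally i.i.d.\ given $\bB$. Exchanging the roles of $\mu$ and $\nu$ and using symmetry, I obtain
\begin{align}
\mE\Big[\int\big(\Gamma(\cdot,\mu)-\Gamma(\cdot,\nu)\big)\,d(\mu-\nu)\Big]\le 0.
\end{align}
Indeed, the left side expands to $2\mE[\int\Gamma(\mu)d\mu]-2\mE[\int\Gamma(\mu)d\nu]\le0$. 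Monotonicity \eqref{LL-monotonicity} makes the integrand nonnegative, so it vanishes almost surely. The strict part of Assumption \ref{ass:strong} then gives $\mu^x=\nu^x$ a.s.

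\textbf{Step 3: conclusion.} If two conditionally independent copies of $\hat\mu^x$ given $\hat\bB$ coincide almost surely, then the conditional law $\hat Q_\bB\circ(\mu^x)^{-1}$ is a Dirac mass for a.e.\ $\bB$. Hence $\hat\mu^x$ is $\sigma(\hat\bB)$-measurable, and therefore $\sigma(B)$-measurable up to null sets, which is strong.

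The point I expect to need care is whether ``strong'' requires the full $\hat\mu$ (the joint law of $(X,\Lambda,W)$), rather than only $\hat\mu^x$, to be $B$-measurable. For the $\Lambda$-component, I would argue that one can replace $\hat\mu$ by $\mE[\hat\mu|\hat\bB]$. This replacement stays admissible by convexity of $R(\bB)$, as in the necessity proof of Proposition \ref{prop:randomized-weak}. Its cost is unchanged because $\Gamma$ depends on $\mu$ only through $\mu^x$, which is already $\bB$-measurable. It also remains consistent, yielding a strong equilibrium.

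The main obstacle is the causality verification in Step 1. Measurability of the kernel $\bB\mapsto\hat Q_\bB$ is standard by disintegration.
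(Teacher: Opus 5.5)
Your proposal is correct and follows essentially the same route as the paper: admissibility of $\hat\bP^{\hat Q}_{\rm ind}$ (because $R(\bB)$ does not depend on $\mu$), then optimality of $\hat\bP^{\hat Q}_{\rm diag}$ combined with the Lasry--Lions inequality, symmetrized over the exchangeable pair $(\hat\mu,\hat\nu)$, to force equality in \eqref{LL-monotonicity}, and finally the fact that two conditionally independent copies given $\hat\bB$ that coincide a.s.\ are $\sigma(\hat\bB)$-measurable. Your explicit causality check for the independent coupling fills in a step the paper only asserts, and your remark about the full $\hat\mu$ goes beyond what the paper claims, since the paper only concludes $\bar\mu^x=\cL^{\bar\mP}(\bar X|\bar B)$.
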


\begin{proof}
    Under Assumption \ref{ass:strong}-1, the definition of the pathwise admissible set $R$ does not depend on $\mu$, so we simply write $R(\bB)=R(\bB,\mu)$ for any $\mu$. Pick a weak equilibrium $\hat Q$. By Proposition \ref{prop:randomized-weak}, $\hat\bP^{\hat Q}_{\rm diag}\in \hat \cR^{\rm opt}(\hat Q)$. In particular, $\hat \bP^{\hat Q}_{\rm diag}\in \hat \cR(\hat Q)$, hence $\hat\bP(\hat\nu\in R(\hat \bB))=\hat Q(\hat\mu\in R(\hat\bB))=1$. Therefore $\hat\bP^{\hat Q}_{\rm ind}(\hat\nu \in R(\hat\bB))=\hat Q(\hat\mu\in R(\hat \bB))=1$, and $\hat\bP^{\hat Q}_{\rm ind}\in \hat \cR(\hat Q)$, implying $\bJ(\hat \bP^{\hat Q}_{\rm ind})\geq \bJ(\hat \bP^{\hat Q}_{\rm diag})$. On the other hand, by Assumption \ref{ass:strong}-2, for $\hat \bP^{\hat Q}_{\rm ind}$-a.s. $\hat\mu$ and $\hat \nu$,
    \begin{align}\label{as:LL}
      \int_\Omega \Gamma(x,q,\hat\mu)\hat\nu(d\omega)+\int_\Omega\Gamma(x,q,\hat\nu)\hat\mu(d\omega)\leq \int_\Omega \Gamma(x,q,\hat\mu)\hat\mu(d\omega)+\int_\Omega\Gamma(x,q,\hat\nu)\hat\nu(d\omega) .
    \end{align}
    Taking expectation with respect to $\hat\bP^{\hat Q}_{\rm ind}$, and noting that
    \begin{align}
    \mE^{\hat \bP^{\hat Q}_{\rm ind}}\bigg[\int_\Omega \Gamma(x,q,\hat\mu)\hat\nu(d\omega)\bigg]=&   \mE^{\mP^0_\bB}\bigg[\int_\Omega\mE^{\hat Q_\bB}[\Gamma(x,q,\hat\mu)]\mE^{\hat Q_\bB}[\hat\mu](d\omega)\bigg]\\
    =&\mE^{\hat \bP^{\hat Q}_{\rm ind}}\bigg[\int_\Omega \Gamma(x,q,\hat\nu)\hat\mu(d\omega)\bigg],
    \end{align}
we have $\bJ(\hat\bP^{\hat Q}_{\rm ind})\leq \bJ(\hat \bP^{\hat Q}_{\rm diag})$. By the optimality of $\hat\bP^{\hat Q}_{\rm diag}$, we conclude that $\bJ(\hat\bP^{\hat Q}_{\rm ind})= \bJ(\hat \bP^{\hat Q}_{\rm diag})$ and \eqref{as:LL} holds as equality for $\hat \bP^{\hat Q}_{\rm ind}$-a.s. $\hat\mu$ and $\hat\nu$. Assumption \ref{ass:strong}-2 then gives $\hat \mu^x = \hat \nu^x$, a.s. But under $\hat \bP^{\hat Q}_{\rm ind}$ they are conditionally independent given $\bB$. By a classical result (see, e.g., Theorem 8.11, \cite{kallenberg2021foundations}) $\hat\mu^x$ is measurable with respect to $\overline{\sigma(\bB)}\subset \overline{\sigma(B)}$. Here, the overline denotes completion under $\hat \bP^{\hat Q}_{\rm ind}$. Translating this condition to $\bar \mP:=T(\hat Q)$, we have $\bar\mu^x = \cL^{\bar\mP}(\bar X|\bar B,\bar \mu) = \cL^{\bar\mP}(\bar X|\bar B) $ a.s., which is precisely the consistency condition in strong equilibrium.
\end{proof}

\begin{remark}
   All discussions in this section potentially apply to other common noise models beyond Brownian motion. More specifically, we may replace the probability measure $\mP^0_\bB$ (currently the It\^o-lifted Wiener measure) with laws of other stochastic processes and change the rough path space when needed. Provided that the pathwise rough path integrals have natural connections with the corresponding probabilistic integrals, we can similarly characterize MFG problems with random common noise through the pathwise formulation. One example in this direction is fractional Brownian motion with Hurst index $H\in (1/3,1/2)$, where the probabilistic integrals are interpreted as Skorohod integrals.
\end{remark}

\appendix

\section{Causal couplings between $\Lambda$ and $W$}
\label{app:causal-coupling}

In this appendix, we present two auxiliary results regarding causal coupling, a special type of coupling between $\Lambda$ and $W$ that plays a core role in our proof of existence and uniqueness results for $R(\bB,\lambda,\bmu)$.

\begin{lemma}
    For any input $(\bB,\lambda,\bmu)$ and any $\mP\in R(\bB,\lambda,\bmu)$, $(\Lambda,W)_\#\mP\in \cP_W(\cQ\times \cW)$ and is a causal coupling.
\end{lemma}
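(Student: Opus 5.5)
We have to show two things: that the $W$-marginal of $\mP$ is Wiener measure, and that the coupling $(\Lambda,W)_\#\mP$ is causal. Both should follow from a single fact, namely that $W$ is an $(\mP,\mF)$-Brownian motion, where $\mF$ is the full canonical filtration generated by $(X,\Lambda,W)$.

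\emph{Step 1: $W$ is an $(\mP,\mF)$-Brownian motion.}
By Remark \ref{martignale-condition-special-case}, item 1, condition 3 of Definition \ref{def:pathwise-relax-control} applied to test functions $\phi\in C_0^\infty(\mR^l)$ depending only on $w$ gives this. We take the functions $\phi$ as cut-off approximations of $w^i$ and $w^iw^j$, use boundedness of the integrands to pass to the limit, and then apply L\'evy's characterization. In particular $W_\#\mP=\mP_W$, so $(\Lambda,W)_\#\mP\in\cP_W(\cQ\times\cW)$.

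\emph{Step 2: independence of future increments.}
Fix $t\in[0,T]$. Since $W$ is an $\mF$-Brownian motion, the increment process $(W_{t+r}-W_t)_{r\in[0,T-t]}$ is independent of $\F_t$. This holds first for finite-dimensional distributions, using independent increments and iterated conditioning, and then for the whole process by a monotone class argument. Because $\F^\Lambda_t\vee\F^W_t\subset\F_t$, this increment process is in particular independent of $\F^\Lambda_t\vee\F^W_t$.

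\emph{Step 3: causality.}
Note that $\F^W_T=\F^W_t\vee\sigma(W_{t+\cdot}-W_t)$. Let $A\in\F^\Lambda_t$, $C\in\F^W_t$, and let $D$ be an event generated by the future increments. Then
\begin{align}
\mP(A\cap C\cap D)=\mP(A\cap C)\,\mP(D)=\mE^{\mP}\big[\mP(A|\F^W_t)\,1_C\big]\,\mP(D)=\mE^{\mP}\big[\mP(A|\F^W_t)\,1_C 1_D\big],
\end{align}
where the first and last equalities use Step 2 (in the last, $D$ is independent of $\F^W_t$). By a $\pi$--$\lambda$ argument over sets of the form $C\cap D$, this gives $\mP(A|\F^W_T)=\mP(A|\F^W_t)$. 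That is exactly the conditional independence of $\F^\Lambda_t$ and $\F^W_T$ given $\F^W_t$. These are statements about the joint law of $(\Lambda,W)$, so they transfer to $(\Lambda,W)_\#\mP$ on $\cQ\times\cW$.

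\emph{Main obstacle.}
The only delicate point is Step 1. The martingale property in Definition \ref{def:pathwise-relax-control} is stated for the raw canonical filtration, while L\'evy's characterization is applied after augmentation. We therefore need that the augmentation does not destroy the martingale property; the paper's remark on continuous paths covers this. We also need the localization from compactly supported $\phi$ to polynomial $\phi$, which is handled by the exit times of $W$ from balls. The remaining steps are routine measure theory.
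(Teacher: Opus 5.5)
Your proof is correct and follows essentially the same route as the paper. Both use Remark~\ref{martignale-condition-special-case} to get that $W$ is an $(\mP,\mF)$-Brownian motion, hence that its future increments are independent of $\F^{\Lambda,W}_t$, and then deduce the conditional independence. The paper phrases that last step as $\mE[g(W)|\F^{\Lambda,W}_t]=\mE[g(W)|\F^W_t]$ and then factorizes $\mE[h\,g(W)|\F^W_t]$, whereas you prove the equivalent form $\mP(A|\F^W_T)=\mP(A|\F^W_t)$ by a $\pi$--$\lambda$ argument.
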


\begin{proof}
    Denote $Q=(\Lambda,W)_\#\mP$. By Remark \ref{martignale-condition-special-case}, $W$ is a $(\mP,\mF)$ Brownian motion, hence $W_\#=\mP_W$. Moreover, $W-W_{\cdot\wedge t}$ is independent of $\F^{\Lambda,W}_t$ under $Q$. Hence, for any bounded measurable function $g$ on $\cW$, it holds that
    \begin{align}
        \mE^Q[g(W)|\F^{\Lambda,W}_t]=&\mE^Q[g(W_{\cdot \wedge t}+W-W_{\cdot \wedge t}|\F^{\Lambda,W}_t]\\
    =&\mE^Q[g(w+W-W_{\cdot \wedge t})]\bigg|_{w=W_{\cdot \wedge t}}\\
    =&\mE^{\mP_W}[g(w+W-W_{\cdot \wedge t})]\bigg|_{w=W_{\cdot\wedge t}}\\
    =:& \tilde g(W_{\cdot \wedge t}).
    \end{align}
    Noting that the right-hand side is $\F^W_t$-measurable, we have
    \begin{align}
        \mE^Q[g(W)|\F^{W}_t] = \tilde g(W_{\cdot \wedge t}).
    \end{align}
    On the other hand, for any bounded measurable function $h$ on $\cQ$ only depending on $\Lambda|_{[0,t]\times U}$, we have
    \begin{align}
        \mE^Q[h(\Lambda|_{[0,t]\times U})g(W)|\F^W_t]=&\mE^Q\big[h(\Lambda|_{[0,t]\times U})\mE^Q[g(W)|\F^{\Lambda,W}_t]\big| \F^W_t\big]\\
        =&\mE^Q[h(\Lambda|_{[0,t]\times U})|\F^W_t] \tilde g(W_{\cdot \wedge t })\\
        =&\mE^Q[h(\Lambda|_{[0,t]\times U})|\F^W_t] \mE^Q[g(W)|\F^{W}_t],
    \end{align}
    which verifies the desired conditional independence.
\end{proof}

\begin{lemma}\label{lemma:BMexistence}
    For any causal coupling $Q\in \cP_W(\cQ\times \cW)$, $W$ is a Brownian motion under $(Q,\mF^{\Lambda,W})$.
\end{lemma}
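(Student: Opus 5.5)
The plan is to use Lévy's characterization. Under $Q$, $W$ is continuous and $\mF^{\Lambda,W}$-adapted, and $W_\#Q=\mP_W$. It therefore suffices to show that, for every $0\le s<t\le T$, the increment $W_t-W_s$ is independent of $\F^{\Lambda,W}_s$ and is Gaussian with covariance $(t-s)I$. The Gaussian part is automatic, since the marginal of $W$ is Wiener measure. So the whole task is to prove the independence of increments from the enlarged filtration. Once that holds, $W$ is a square-integrable $(Q,\mF^{\Lambda,W})$-martingale, and so is $W^iW^j-\delta_{ij}t$; Lévy then gives the claim. Equivalently, independent Gaussian increments with respect to the filtration is already the definition of an $\mF^{\Lambda,W}$-Brownian motion.

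To get independence, fix bounded measurable $h$ depending only on $\Lambda|_{[0,s]\times U}$, bounded measurable $k$ depending only on $W_{\cdot\wedge s}$, and bounded measurable $g$ on $\mR^n$. By a monotone class argument, products $h\,k$ generate $\F^{\Lambda,W}_s$, so it is enough to show
\begin{align}
\mE^Q[h\,k\,g(W_t-W_s)]=\mE^Q[h\,k]\,\mE^{\mP_W}[g(W_t-W_s)].
\end{align}

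First, condition on $\F^W_T$. Since $k\,g(W_t-W_s)$ is $\F^W_T$-measurable, the left side equals $\mE^Q\big[\mE^Q[h\mid\F^W_T]\,k\,g(W_t-W_s)\big]$. Causality says that $\F^\Lambda_s$ is conditionally independent of $\F^W_T$ given $\F^W_s$. This gives $\mE^Q[h\mid\F^W_T]=\mE^Q[h\mid\F^W_s]=:\tilde h(W_{\cdot\wedge s})$, and the left side becomes $\mE^Q[\tilde h\,k\,g(W_t-W_s)]$. That expression depends only on the law of $W$, which is $\mP_W$. Under Wiener measure, $W_t-W_s$ is independent of $\F^W_s$, so the expression factorizes as
\begin{align}
\mE^{\mP_W}[\tilde h\,k]\,\mE^{\mP_W}[g(W_t-W_s)]=\mE^Q[h\,k]\,\mE^{\mP_W}[g(W_t-W_s)],
\end{align}
using the tower property once more.

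The only delicate point, and the main (mild) obstacle, is the conditional-independence identity $\mE^Q[h\mid\F^W_T]=\mE^Q[h\mid\F^W_s]$. This needs the standard equivalence: conditional independence of $\mathcal G$ and $\mathcal H$ given $\mathcal K\subset\mathcal H$ holds if and only if $\mE[\cdot\mid\mathcal H]=\mE[\cdot\mid\mathcal K]$ on $\mathcal G$. We apply it with $\mathcal G=\F^\Lambda_s$, $\mathcal H=\F^W_T$, $\mathcal K=\F^W_s$. Beyond that, we only need the usual care that the augmentation remark does not affect the argument, since we work with the raw filtration throughout.
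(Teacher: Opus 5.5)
Your proof is correct and follows essentially the same route as the paper: you reduce via a monotone class argument to test functions $h\,k\,g(W_t-W_s)$, use causality at time $s$, and conclude from the independence of Wiener increments from $\F^W_s$. The only difference is cosmetic: you use conditional independence in its projection form $\mE^Q[h\mid\F^W_T]=\mE^Q[h\mid\F^W_s]$, whereas the paper conditions on $\F^W_s$ and uses the product form $\mE^Q[h\,g(W_t-W_s)\mid\F^W_s]=\mE^Q[h\mid\F^W_s]\,\mE^Q[g(W_t-W_s)\mid\F^W_s]$; and, as you note yourself, the appeal to L\'evy's characterization is unnecessary.
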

\begin{proof}
    In light of $W_\#Q=\mP_W$, the distributional properties of Brownian motion are satisfied. We only need to verify that, for any $s<t$, $W_t-W_s$ is independent of $\F^{\Lambda,W}_s$. To this end, for any bounded measurable functions $\varphi$ on $\mR^n$, $g$ on $\cW$ and $h$ on $\cQ$ such that $g$ only depends on $W_{\cdot \wedge s}$ and $h$ only depends on $\Lambda|_{[0,s]\times U}$, the conditional independence in the definition of causal coupling implies that
    \begin{align}
        \mE^Q[\varphi(W_t-W_s)h(\Lambda|_{[0,s]\times U})g(W_{\cdot \wedge s})]=& \mE^Q\big[g(W_{\cdot \wedge s})\mE^Q[\varphi(W_t-W_s)h(\Lambda|_{[0,s]\times U})|\F^W_s]\big]\\
        =&\mE^Q\big[g(W_{\cdot \wedge s})\mE^Q[\varphi(W_t-W_s)|\F^W_s]\mE^Q[h(\Lambda|_{[0,s]\times U})|\F^W_s]\big]\\
        =&\mE^Q[g(W_{\cdot \wedge s})h(\Lambda|_{[0,s]\times U})]\mE^Q[\varphi(W_t-W_s)].
    \end{align}
    By the arbitrariness of $g$ and $h$, we can use a monotone class extension argument to extend the equality above to any jointly measurable functions on  $W_{\cdot \wedge s}$ and $\Lambda|_{[0,s]\times U}$. As a consequence, $W_t-W_s$ is independent of $\F^{\Lambda,W}_s$.
\end{proof}

\section{Properties of rough martingale problems}\label{app:rough-martingale}

This appendix consists of useful properties of $\mP\in R(\bB,\bmu)$ (for a fixed input $(\bB,\bmu)$). The first several results will essentially be used to investigate $M^X$, and hence it is sufficient to take $\phi\in C_0^\infty(\mR^d)$. Recall that
\begin{align}
    &I_t(\phi) = \int_0^t \int_U \mL\phi(s,X_s,\mu_s,u)\Lambda_s(du)ds,\\
    &\cA_t(\phi) = \int_0^t \big(\Ti_s(\phi),\Ti'_s(\phi)\big)d \bB.
\end{align}

\begin{lemma}\label{lemma:roughIto:quadratic}
For any $\mP\in R(\bB,\bmu)$ and any $\phi\in C_0^\infty(\mR^d)$, we have $(2\cA_s(\phi)\Ti_s(\phi),2\Ti_s(\phi)^{\otimes2}+2\cA_s(\phi)\Ti'_s(\phi))\in \bD^{\beta,\beta'}_{\bB}L^{m,\infty}_{\mP}$, and the following result holds $\mP$-a.s.:
\begin{align}\label{roughIto:quadratic}
\cA_t(\phi)^2 = \int_0^t \big(2\cA_s(\phi)\Ti_s(\phi),2\Ti_s(\phi)^{\otimes2}+2\cA_s(\phi)\Ti'_s(\phi)\big)d\bB+\int_0^t |\Ti_s(\phi)|^2 ds,\quad \forall t\in [0,T].
\end{align}
\end{lemma}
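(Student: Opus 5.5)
The plan is to view $\cA(\phi)$ as a stochastic controlled rough path and apply a rough It\^o formula for the square function, in the spirit of Theorem 4.13 of \cite{RSDE}. By Lemma \ref{lemma:T-phi-rough-path-norm}, $(\Ti(\phi),\Ti'(\phi))\in \bD^{\beta,\beta'}_\bB L^{m,\infty}_{\mP}$. The rough stochastic sewing lemma (Theorems 2.8 and 3.5 of \cite{RSDE}) then gives that $\cA(\phi)$ is well defined with Gubinelli derivative $\Ti(\phi)$. More precisely, $(\cA(\phi),\Ti(\phi))\in \bD^{\beta,\beta'}_\bB L^{m,\infty}_{\mP}$, with remainder satisfying
\begin{align}
\delta\cA_{s,t}(\phi)-\Ti_s(\phi)\delta B_{s,t}=\Ti'_s(\phi)\mB_{s,t}+O(|t-s|^{\beta+\beta'}).
\end{align}
The process $\cA(\phi)$ has no martingale part. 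Its quadratic variation-type correction is therefore purely the bracket term $[\bB]_{s,t}=(t-s)I$ paired against $\Ti\otimes\Ti$, which yields $\int_0^t|\Ti_s(\phi)|^2ds$.

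\textbf{Step 1: membership.} I will show $(2\cA\Ti,\,2\Ti^{\otimes2}+2\cA\Ti')\in\bD^{\beta,\beta'}_\bB L^{m,\infty}_\mP$. This is a product rule for stochastic controlled rough paths: if $(Y,Y')$ and $(Z,Z')$ are controlled, then $(YZ,\,Y'Z+YZ')$ is controlled. Here the product of two derivative terms $\Ti\otimes\Ti$ arises from $Y'=\Ti$, and $Y=\cA$. Integrability needs care, because conditional $L^{m,\infty}$ norms of products require boundedness of one factor. I will use that $\Ti(\phi),\Ti'(\phi)$ are uniformly bounded, since $\phi$ has compact support and $\tilde\sigma^0,\hat\sigma'$ are bounded. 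I also need $\sup_t|\cA_t|$ in $L^{m}$, which follows from the controlled structure and the Kolmogorov criterion. Alternatively, I can work with $m$ replaced by $m/2$; since $m\geq 4$ this still lies in the admissible range.

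\textbf{Step 2: identity via germ comparison.} I will compute
\begin{align}
\cA_t^2-\cA_s^2=2\cA_s\delta\cA_{s,t}+(\delta\cA_{s,t})^2 .
\end{align}
I then expand $\delta\cA_{s,t}=\Ti_s\delta B_{s,t}+\Ti'_s\mB_{s,t}+R_{s,t}$, which gives
\begin{align}
(\delta\cA_{s,t})^2=\Ti_s^{\otimes2}(\delta B_{s,t}\otimes\delta B_{s,t})+o(|t-s|).
\end{align}
Using $\mathrm{Sym}(\mB_{s,t})=\frac12(\delta B\otimes\delta B-(t-s)I)$, which encodes the bracket, I will write
\begin{align}
\Ti_s^{\otimes2}(\delta B\otimes\delta B)=2\Ti_s^{\otimes2}\mB_{s,t}+|\Ti_s|^2(t-s)
\end{align}
(up to antisymmetric parts, which vanish against the symmetric $\Ti^{\otimes2}$). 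Hence $\delta(\cA^2)_{s,t}$ equals the germ
\begin{align}
\Xi_{s,t}:=2\cA_s\Ti_s\delta B_{s,t}+(2\Ti_s^{\otimes2}+2\cA_s\Ti'_s)\mB_{s,t}+|\Ti_s|^2(t-s)
\end{align}
up to an error $\epsilon_{s,t}$ with $\|\mE_s\epsilon_{s,t}\|\lesssim|t-s|^{\beta+\beta'}$ and $\|\epsilon_{s,t}\|_m\lesssim|t-s|^{2\beta}$. Both exponents exceed the thresholds $1$ and $1/2$ required by the stochastic sewing uniqueness, because $(\beta,\beta')\in\Pi$ gives $\beta+\beta'\ge\gamma\beta>1$. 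Summing over partitions, the Riemann sums of $\Xi$ converge in probability to the right side of \eqref{roughIto:quadratic}, by Theorem 3.5 of \cite{RSDE} together with the classical Lebesgue term. Meanwhile, the sums of $\delta(\cA^2)$ telescope, and the error sums vanish by the stochastic sewing lemma. This gives the identity for each $t$ almost surely. Continuity of both sides then upgrades it to all $t$ simultaneously.

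\textbf{Main obstacle.} The hard part is controlling the error terms in conditional $L^{m,\infty}$ norms. In particular, $\cA_s R_{s,t}$ must satisfy both the conditional-mean bound of order $\beta+\beta'$ and the $L^m$ bound of order greater than $1/2$. Products of the unbounded factor $\cA_s$ with remainders break the $L^{m,\infty}$ structure. My first attempt would be to use that $\cA_s$ is $\F_s$-measurable, so that
\begin{align}
\mE_s[\cA_sR_{s,t}]=\cA_s\mE_s[R_{s,t}].
\end{align}
I would then estimate in $L^{m/2,\infty}$ or $L^{m,m}$ rather than $L^{m,\infty}$, localizing by stopping times where $|\cA|\le K$ if necessary. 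A simpler fallback is to invoke the general rough It\^o formula (Theorem 4.13 of \cite{RSDE}) applied to $\cA$ viewed as a rough SDE solution with zero drift and zero diffusion, composed with $x\mapsto x^2$ truncated smoothly to a bounded $C^3$ function.
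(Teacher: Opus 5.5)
Your route is the same as the paper's. You get membership from the product identity, then compare $\delta(\cA^2)_{s,t}$ with the second-order germ, using $[\bB]_{s,t}=(t-s)I$ to rewrite $\Ti_s^{\otimes2}(\delta B_{s,t})^{\otimes2}$ as $2\Ti_s^{\otimes2}\mB_{s,t}+|\Ti_s|^2(t-s)$, and conclude by uniqueness in the stochastic sewing lemma. Putting $|\Ti_s|^2(t-s)$ into the germ rather than subtracting $\int_s^t|\Ti_u|^2du$ is immaterial.

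\textbf{The gap: the error estimate that closes the argument fails.} You bound the conditional mean of $\epsilon_{s,t}$ by $|t-s|^{\beta+\beta'}$ and assert that $(\beta,\beta')\in\Pi$ gives $\beta+\beta'\geq\gamma\beta>1$. But $\Pi$ imposes $\beta'\leq(\gamma-1)\beta$, so the inequality runs the other way: $\beta+\beta'\leq\gamma\beta$. Nothing forces either quantity above $1$. For example, $\gamma=2$ and $\beta=\beta'=0.34$ is admissible for any $\alpha\geq 0.34$, and gives $\beta+\beta'=\gamma\beta=0.68$. With a conditional-mean error of order only $\beta+\beta'<1$, the compensator part of $\sum_{[u,v]\in\cP}\epsilon_{u,v}$ is controlled only by $\sum|v-u|^{\beta+\beta'}$. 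That sum blows up as $|\cP|\to0$, so the uniqueness criterion cannot be invoked.

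The root cause is your expansion $\delta\cA_{s,t}-\Ti_s\delta B_{s,t}=\Ti'_s\mB_{s,t}+O(|t-s|^{\beta+\beta'})$. Here $\beta+\beta'$ is just the order of the controlled-path remainder $\mE_s[R^{\cA}_{s,t}]$ when $(\cA,\Ti)$ is viewed in $\bD^{\beta,\beta'}_\bB$. That bound already absorbs $\Ti'_s\mB_{s,t}$. Even the sharper membership $(\cA,\Ti)\in\bD^{\alpha,\beta}_\bB$ only gives order $\alpha+\beta$, which can also be below $1$. Once multiplied by $2\cA_s$, such bounds are too weak.

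\textbf{The fix is the second-order local expansion of the rough stochastic integral}, which comes from its sewing construction (Theorem 3.5 of \cite{RSDE}). Since $\beta\leq\alpha$, the quantity $\delta\cA_{s,t}-\Ti_s\delta B_{s,t}-\Ti'_s\mB_{s,t}$ has:
\begin{itemize}
\item conditional $L^m$-norm of order $|t-s|^{\alpha+\beta}$;
\item conditional mean of order $|t-s|^{\alpha+\min(\alpha,\beta)+\beta'}=|t-s|^{\alpha+\beta+\beta'}$.
\end{itemize}
All indices exceed $1/(1+\gamma)\geq1/3$, so these exponents are $>1/2$ and $>1$ respectively.

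This is exactly what the paper's proof uses for the term $J_2=2\cA_s(\delta\cA_{s,t}-\Ti_s\delta B_{s,t}-\Ti'_s\mB_{s,t})$, with exponents $\lambda_1>1/2$ and $\lambda_2>1$. The unbounded factor $\cA_s$ is handled through its $\F_s$-measurability and H\"older's inequality with $m'=2m/(m-2)$, as you anticipated. The remaining quadratic piece $(\delta\cA_{s,t})^2-|\Ti_s\delta B_{s,t}|^2$ is then of order $2\alpha+\beta>1$ and negligible without any cancellation.

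A minor point on your Step 1: the obstruction to $L^{m,\infty}$-membership of $\cA\Ti$ is the outer index $\infty$, since it would require $\cA_s$ to be essentially bounded. Replacing $m$ by $m/2$ therefore does not address it. For the identity itself, only $L^2$-type bounds are needed.
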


\begin{proof}
    The conclusion is a slight generalization of the rough It\^o lemma established in Theorem 4.13 of \cite{RSDE}), since the quadratic function in our setting is not necessarily bounded.

    First of all, by the facts $(\cA(\phi),\Ti(\phi))\in \bD^{\alpha,\beta}_{\bB}L^{m,\infty}_{\mP}$ and $(\Ti(\phi),\Ti'(\phi))\in \bD^{\beta,\beta'}_{\bB}L^{m,\infty}_{\mP}$, it is not hard to show that $(2\cA_s(\phi)\Ti_s(\phi),2\Ti_s(\phi)^{\otimes2}+2\cA_s(\phi)\Ti'_s(\phi))\in \bD^{\beta,\beta'}_{\bB}L^{m,\infty}_{\mP}$ by using the identity
    \begin{align}
        &\cA_t(\phi)\Ti_t(\phi)-\cA_s(\phi)\Ti_s(\phi)-\Ti_s(\phi)^{\otimes 2}\delta B_{s,t}-\cA_s(\phi)\Ti'_s(\phi)\delta B_{s,t}\\
        =& \cA_s(\phi)(\Ti_t(\phi)-\Ti_s(\phi)-\Ti'_s(\phi)\delta B_{s,t})\\
        &+\Ti_s(\phi)(\cA_t(\phi)-\cA_s(\phi)-\Ti_s(\phi)\delta B_{s,t})\\
        &+\big(\Ti_t(\phi)-\Ti_s(\phi)\big)\big(\cA_t(\phi)-\cA_s(\phi)\big).
    \end{align}
    Therefore, the rough integration in \eqref{roughIto:quadratic} is well-defined.

    For any $s<t$, denote
\begin{align}
    &A_{s,t} = 2\cA_s(\phi)\Ti_s(\phi)\delta B_{s,t}+2\big(\Ti_s(\phi)^{\otimes 2}+\cA_s(\phi)\Ti'_s(\phi)\big)\mB_{s,t},\\
    &\tilde A_{s,t} = \cA_t(\phi)^2-\cA_s(\phi)^2-\int_s^t |\Ti_u(\phi)|^2 ds.
\end{align}
By the definition of rough integration and the uniqueness in stochastic sewing lemma, it is sufficient to prove that there exist $p_1>1/2$ and $p_2>1$ such that for any $s<t$,
\begin{align}
    &\|\tilde A_{s,t}-A_{s,t}\|_2\lesssim (t-s)^{p_1},\\
    &\|\mE_s[\tilde A_{s,t}-A_{s,t}]\|_2\lesssim (t-s)^{p_2}.
\end{align}
To this end, we first observe that
\begin{align}
    \tilde A_{s,t}-A_{s,t} =&\big(\cA_t(\phi)-\cA_s(\phi)\big)^2 - \bigg(\int_s^t |\Ti_u(\phi)|^2 du + 2\Ti_s(\phi)^{\otimes 2}\mB_{s,t}\bigg)\\
    &+2\cA_s(\phi)\big(\cA_t(\phi)-\cA_s(\phi)-\Ti_s(\phi)\delta B_{s,t}-\Ti'_s(\phi)\mB_{s,t}\big)\\
    =:&J_1+J_2.
\end{align}
When $m\geq 2$, with $m':=\frac{2m}{m-2}\in (2,\infty]$, we have $\bD_\bB^{\alpha,\beta}L^{m,\infty}_{\mP}\subset \bD_{\bB}^{\alpha,\beta}L^{2,m'}_{\mP}$. Then, H\"older's inequality yields that
\begin{align}
    \||J_2|\|_2\leq &2\|\cA_s(\phi)\|_m\big\| \|\cA_t(\phi)-\cA_s(\phi)-\Ti_s(\phi)\delta B_{s,t}-\Ti'_s(\phi)\mB_{s,t}|\F_s\|_2\big\|_{m'}\\
    \lesssim& \big\|\big(\cA_{\cdot}(\phi),\Ti(\phi)\big)\big\|_{\bB;\alpha,\beta;m,\infty}s^\alpha (t-s)^{\lambda_1}\\
    \lesssim& (t-s)^{\lambda_1},
\end{align}
for some $\lambda_1>1/2$. Similarly, it holds that
\begin{align}
    \|\mE^{\mP}[J_2|\F_s]\|_2\leq & \|\cA_s(\phi)\|_m\big\| \mE^{\mP}[\cA_t(\phi)-\cA_s(\phi)-\Ti_s(\phi)\delta B_{s,t}-\Ti'_s(\phi)\mB_{s,t}|\F_s]\big\|_{m'}\\
    \lesssim & (t-s)^{\lambda_2},
\end{align}
for some $\lambda_2>1$. Next, to deal with $J_1$, we recall by definition that $[\bB]_{s,t}=(\delta B_{s,t})^{\otimes 2}-(\mB_{s,t}+\mB^\mt_{s,t})=(t-s)I_{k\times k}$. Moreover, the notation of tensor operations gives $\Ti_s(\phi)^{\otimes2}\mB_{s,t} =\Ti_s(\phi)^{\otimes2}:\mB_{s,t}=\Ti_s(\phi)^{\otimes2}\mB_{s,t}^\mt$. As a result, for the second term in $J_1$, we have
\begin{align}
   \int_s^t |\Ti_u(\phi)|^2 du + 2\Ti_s(\phi)^{\otimes 2}\mB_{s,t}=&\int_s^t\big(|\Ti_u(\phi)|^2-|\Ti_s(\phi)|^2\big)d u + |\Ti_s(\phi)|^2 (t-s)  + 2\Ti_s(\phi)^{\otimes 2}\mB_{s,t}\\
   =&\int_s^t \big(|\Ti_u(\phi)|^2-|\Ti_s(\phi)|^2\big)d u+\Ti_s(\phi)^{\otimes 2}\big((\delta B_{s,t})^{\otimes 2}-(\mB_{s,t}+\mB^\mt_{s,t})\big)\\
   &+ \Ti_s(\phi)^{\otimes 2}(\mB_{s,t}+\mB_{s,t}^\mt)\\
   =& \int_s^t \big(|\Ti_u(\phi)|^2-|\Ti_s(\phi)|^2\big)d u + |\Ti_s(\phi)\delta B_{s,t}|^2.\label{J1:id}
\end{align}
On one hand, it holds that
\begin{align}
    \bigg\|\int_s^t \big(|\Ti_u(\phi)|^2-|\Ti_s(\phi)|^2\big)d u\bigg\|_2\lesssim & \int_s^t \|\Ti_u(\phi)-\Ti_s(\phi)\|_2 d u\lesssim (t-s)^{1+\beta}. \label{J1:id1}
\end{align}
On the other hand, we deduce that
\begin{align}
    \big(\cA_t(\phi)-\cA_s(\phi)\big)^2 -  |\Ti_s(\phi)\delta B_{s,t}|^2=&\big(\cA_t(\phi)-\cA_s(\phi)-\Ti_s(\phi)\delta B_{s,t}-\Ti'_s(\phi)\mB_{s,t}\big)^2\\
    &+2\Ti_s(\phi)\delta B_{s,t}\big(\cA_t(\phi)-\cA_s(\phi)-\Ti_s(\phi)\delta B_{s,t}-\Ti_s'(\phi)\mB_{s,t}\big)\\
    &+\Ti'_s(\phi)\mB_{s,t}\big(\cA_t(\phi)-\cA_s(\phi)+\Ti_s(\phi)\delta B_{s,t}\big)\\
    &=: J_{1,1}+J_{1,2}+J_{1,3}, \label{J11-J13}
\end{align}
which are estimated separately as follows: for some $\mu_1>1/2,\mu_2>1$,
\begin{align}
    &\|J_{1,1}\|_2  = \|\cA_t(\phi)-\cA_s(\phi)-\Ti_s(\phi)\delta B_{s,t}-\Ti'_s(\phi)\mB_{s,t}\|_4^2\lesssim (t-s)^{2\mu_1},\\
    &\|J_{1,2}\|_2 \lesssim (t-s)^\alpha \|\cA_t(\phi)-\cA_s(\phi)-\Ti_s(\phi)\delta B_{s,t}-\Ti_s'(\phi)\mB_{s,t}\|_2\lesssim (t-s)^{\mu_1+\alpha},\\
    &\|\mE^{\mP}[J_{1,2}|\F_s]\|_2\lesssim (t-s)^{\mu_2+\alpha} ,\\
    &\|J_{1,3}\|_2\lesssim (t-s)^{2\alpha}\big(\|\cA_t(\phi)-\cA_s(\phi)\|_2+(t-s)^\alpha\big)\lesssim (t-s)^{3\alpha}.\label{J1:id2}
\end{align}
As $3\alpha>1$, \eqref{J1:id}, \eqref{J1:id1}, \eqref{J11-J13} and \eqref{J1:id2} together give the desired estimation of $J_1$, and the proof is completed.
\end{proof}

Similarly to Lemma \ref{lemma:roughIto:quadratic}, the following multiplication formula also holds; its proof is omitted. A key observation is that $t\mapsto I_t(\phi)$ has better regularity than $t\mapsto \cA_t(\phi)$, and hence a similar argument applies.
\begin{lemma}\label{lemma:IA-multiplication}
    For any $\phi\in C_0^\infty(\mR^d)$, we have $(I_s(\phi)\Ti_s(\phi),I_s(\phi)\Ti'_s(\phi))\in \bD^{\beta,\beta'}_\bB L^{m,\infty}_{\mP}$, and the following result holds $\mP$-a.s.:
    \begin{align}
        I_t(\phi)\cA_t(\phi)=\int_0^t \cA_s(\phi)\mL\phi(s,X_s,\mu_s,u)\Lambda_s(du) d s+ \int_0^t \big(I_s(\phi)\Ti_s(\phi),I_s(\phi)\Ti'_s(\phi)\big)d \bB,  \forall t\in [0,T].
    \end{align}
\end{lemma}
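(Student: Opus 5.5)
I will mirror the proof of Lemma \ref{lemma:roughIto:quadratic}, using that $I(\phi)$ is a process of bounded variation with Lipschitz paths. Since $\mL\phi$ is bounded, $|I_t(\phi)-I_s(\phi)|\le C(t-s)$ pathwise.

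\textbf{Step 1: the integrand is a stochastic controlled rough path.} I start from the identity
\begin{align}
I_t\Ti_t - I_s\Ti_s - I_s\Ti'_s\delta B_{s,t} = I_s\big(\Ti_t-\Ti_s-\Ti'_s\delta B_{s,t}\big) + (I_t-I_s)\Ti_t,
\end{align}
where the argument $\phi$ is dropped. The first term inherits the $\beta+\beta'$ bound on its conditional expectation from $(\Ti(\phi),\Ti'(\phi))$, via Lemma \ref{lemma:T-phi-rough-path-norm}, because $I_s$ is bounded and $\F_s$-measurable. The second term is $O(t-s)$ uniformly, and $1\ge\beta+\beta'$. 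The bounds on $\delta(I\Ti)$ and $\delta(I\Ti')$ follow in the same way from the product rule, since $I$ is bounded. So the rough integral is well defined.

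\textbf{Step 2: identification via the germ.} I set
\begin{align}
A_{s,t}&=I_s\Ti_s\delta B_{s,t}+I_s\Ti'_s\mB_{s,t},\\
\tilde A_{s,t}&=I_t\cA_t-I_s\cA_s-\int_s^t \cA_r\int_U\mL\phi(r,X_r,\mu_r,u)\Lambda_r(du)\,dr.
\end{align}
By the uniqueness part of the stochastic sewing lemma, as in the proof of Lemma \ref{lemma:roughIto:quadratic}, it suffices to show
\begin{align}
\|\tilde A_{s,t}-A_{s,t}\|_2\lesssim (t-s)^{p_1},\qquad \|\mE_s[\tilde A_{s,t}-A_{s,t}]\|_2\lesssim (t-s)^{p_2},
\end{align}
for some $p_1>1/2$ and $p_2>1$. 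I decompose, writing $\ell_r:=\int_U\mL\phi(r,X_r,\mu_r,u)\Lambda_r(du)$ so that $I_t-I_s=\int_s^t\ell_r\,dr$:
\begin{align}
\tilde A_{s,t}-A_{s,t} = I_s\big(\cA_t-\cA_s-\Ti_s\delta B_{s,t}-\Ti'_s\mB_{s,t}\big) + \int_s^t(\cA_t-\cA_r)\,\ell_r\,dr .
\end{align}
The first term is handled exactly like $J_2$ in Lemma \ref{lemma:roughIto:quadratic}. Since $I_s$ is bounded, the rough-integral remainder estimates (Theorem 3.5 of \cite{RSDE}) give exponents $>1/2$ in $L^2$ and $>1$ for the conditional expectation. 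For the second term, $\|\cA_t-\cA_r\|_2\lesssim (t-r)^\alpha$ because $(\cA(\phi),\Ti(\phi))\in\bD^{\alpha,\beta}_\bB L^{m,\infty}_\mP$, and $\ell$ is bounded. Hence its $L^2$ norm is $\lesssim(t-s)^{1+\alpha}$, which already exceeds $1$, so both required bounds hold with $p_1=p_2=1+\alpha$.

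\textbf{Main obstacle.} The one point needing care is bookkeeping of exponents, since $I$ is unbounded only through time, not through moments. There is no genuine difficulty, because $I(\phi)$ has finite variation and no bracket correction appears, unlike the $\int|\Ti|^2ds$ term in Lemma \ref{lemma:roughIto:quadratic}. Finally, I pass from fixed $t$ to the statement for all $t\in[0,T]$ simultaneously using continuity of both sides.
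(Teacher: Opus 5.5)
Your proposal is correct and follows the route the paper intends. The paper omits this proof and only says it mirrors the stochastic-sewing argument of Lemma \ref{lemma:roughIto:quadratic}, using the better time regularity of $t\mapsto I_t(\phi)$. Your splitting of $\tilde A_{s,t}-A_{s,t}$ into $I_s$ times the rough-integral remainder plus $\int_s^t(\cA_t-\cA_r)\ell_r\,dr=O((t-s)^{1+\alpha})$ is exactly that argument carried out. The controlled-path check via $I_sR^{\Ti}_{s,t}+(I_t-I_s)\Ti_t$, together with the boundedness and Lipschitz continuity of $I(\phi)$, is also sound.
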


\begin{lemma}\label{lemma:MA}
    For any $(\mP,\mF)$-martingale $\{M_t\}_{0\leq t\leq T}$ such that
    \begin{align}
    &\sup_{t\in [0,T]}\mE[M_t^2]dt<\infty, \label{multiplication:MA:integration}\\
   \text{and}\quad  &\big(M \Ti(\phi),M\Ti'(\phi)\big) \in \bD^{\beta,\beta'}_{\bB}L^{m,\infty}_{\mP}, \label{multiplication:MA:controlled}
    \end{align}
    $\{\tilde M_t\}_{0\leq t \leq T}$ is also a $(\mP,\mF)$-martingale, where
    \begin{align}\label{multiplication:MA}
        \tilde M_t:= M_t \cA_t(\phi) - \int_0^t \big(M_s \Ti_s(\phi),M_s\Ti'_s(\phi)\big) d \bB, \quad 0\leq t\leq T.
    \end{align}
\end{lemma}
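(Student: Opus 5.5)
Fix $0\le s<t\le T$. The plan is to check directly that $\mE^{\mP}[\tilde M_t-\tilde M_s\,|\,\F_s]=0$. We do this the same way as in Lemma \ref{lemma:roughIto:quadratic}: compare the increment of $\tilde M$ with a germ on small intervals, and then pass to the limit through Riemann sums. Take a partition $\cP$ of $[s,t]$. For $[u,v]\in\cP$, the increment of the product splits as
\begin{align}
M_v\cA_v-M_u\cA_u = (M_v-M_u)\cA_u + M_u(\cA_v-\cA_u) + (M_v-M_u)(\cA_v-\cA_u).
\end{align}
Here we abbreviate $\cA=\cA(\phi)$, $\Ti=\Ti(\phi)$, $\Ti'=\Ti'(\phi)$. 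The rough integral in \eqref{multiplication:MA} is approximated by the germ $M_u\Ti_u\delta B_{u,v}+M_u\Ti'_u\mB_{u,v}$. Condition \eqref{multiplication:MA:controlled} together with Theorems 2.8 and 3.5 of \cite{RSDE} gives convergence in probability, and also in $L^1$ after uniform integrability. This leads us to write
\begin{align}
\tilde M_t-\tilde M_s = \lim_{|\cP|\to0}\sum_{[u,v]\in\cP}\Big[(M_v-M_u)\cA_u + M_u\big(\cA_v-\cA_u-\Ti_u\delta B_{u,v}-\Ti'_u\mB_{u,v}\big) + (M_v-M_u)(\cA_v-\cA_u)\Big].
\end{align}

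We then take $\mE_s$ of each of the three sums. The first sum has zero conditional expectation: $\cA_u$ is $\F_u$-measurable, and by \eqref{multiplication:MA:integration} together with $\cA_u\in L^m$, $m\ge 2$, the products are integrable, which requires a short truncation or Hölder argument. For the second sum we use $\mE_s[\,\cdot\,]=\mE_s[\mE_u[\,\cdot\,]]$ and recall that $(\cA,\Ti)\in\bD^{\alpha,\beta}_\bB L^{m,\infty}_{\mP}$, with remainder $\mE_u[\cA_v-\cA_u-\Ti_u\delta B_{u,v}-\Ti'_u\mB_{u,v}]=O((v-u)^{\lambda_2})$ in $L^{\infty}$, where $\lambda_2>1$; this is the stochastic sewing estimate used in the proof of Lemma \ref{lemma:roughIto:quadratic}. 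Multiplying by $M_u\in L^2$ and summing gives a bound of order $\sum(v-u)^{\lambda_2}\to0$ in $L^1$.

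The third sum, the cross-variation term, is the main obstacle. We again replace $\cA_v-\cA_u$ by $\Ti_u\delta B_{u,v}+\Ti'_u\mB_{u,v}$ plus the remainder. For the remainder part we apply Cauchy--Schwarz, using $\|M_v-M_u\|_2$ and the conditional $L^2$ bound of order $(v-u)^{\lambda_1}$ with $\lambda_1>1/2$. Summing then gives $\sum\|M_v-M_u\|_2(v-u)^{\lambda_1}\le(\sum\|M_v-M_u\|_2^2)^{1/2}(\sum(v-u)^{2\lambda_1})^{1/2}$. The first factor is bounded by $\mE[M_t^2]^{1/2}$ by orthogonality of martingale increments, and the second tends to $0$. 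For the main part we use that $\Ti_u$ and $\Ti'_u$ are $\F_u$-measurable and that $\delta B_{u,v}$, $\mB_{u,v}$ are deterministic, so
\begin{align}
\mE_u\big[(M_v-M_u)(\Ti_u\delta B_{u,v}+\Ti'_u\mB_{u,v})\big]=\mE_u[M_v-M_u](\Ti_u\delta B_{u,v}+\Ti'_u\mB_{u,v})=0.
\end{align}
This is the key point: the rough driver is deterministic and carries no martingale part, so it has no covariation with $M$. To justify the integrability we need $\Ti,\Ti'$ bounded, which holds because $\phi\in C_0^\infty$ and $\sigma^0$ is bounded.

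Combining the three sums, $\mE_s$ of the Riemann sums tends to $0$. Passing to the limit requires the convergence to be in $L^1$, which we obtain from the uniform $L^2$ bounds above and from the $L^{m,\infty}$ sewing estimates for the rough integral of $(M\Ti,M\Ti')$. This gives $\mE[\tilde M_t-\tilde M_s\,|\,\F_s]=0$. Integrability of $\tilde M_t$ itself follows from $M_t\in L^2$, $\cA_t\in L^m$ with $m\ge2$, and \eqref{multiplication:MA:controlled}.
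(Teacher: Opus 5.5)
Your proof is correct, but it is organized differently from the paper's.

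\textbf{The paper's route.} The paper works on a partition $\cP$ of $[0,t]$ with the approximant
\[
\tilde M^{\cP}_t = M_t\sum_{[u,v]\in\cP}A_{u,v}-\sum_{[u,v]\in\cP}M_uA_{u,v}=\sum_{[u,v]\in\cP}(M_t-M_u)A_{u,v},\qquad A_{u,v}:=\Ti_u(\phi)\delta B_{u,v}+\Ti'_u(\phi)\mB_{u,v}.
\]
It then computes $\mE^{\mP}[\tilde M^{\cP}_t|\F_s]$ exactly. Terms with $u\ge s$ vanish by the tower property, since $A_{u,v}$ is $\F_u$-measurable. Terms with $u<s$ reproduce $\tilde M^{\cP_s}_s$ up to a single boundary term coming from the interval straddling $s$, which is $O(|\cP|^\alpha)$ in $L^2$. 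Passing to the limit uses only that the Riemann sums converge, by stochastic sewing.

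\textbf{Comparison with your route.} The key observation is the same as yours: an adapted germ times a deterministic rough increment is orthogonal to martingale increments. The difference is that the paper multiplies $M_t$ against the whole Riemann sum of $\cA$ rather than telescoping the product. As a result, neither the cross term $(M_v-M_u)(\cA_v-\cA_u)$ nor the sewing remainder $\cA_v-\cA_u-A_{u,v}$ ever appears.

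Your product-rule telescoping over $[s,t]$ does produce both. You therefore need two extra quantitative inputs for $(\cA,\Ti)\in\bD^{\alpha,\beta}_\bB L^{m,\infty}_{\mP}$:
\begin{itemize}
\item the bound on the conditional mean of the remainder, of order $\lambda_2>1$;
\item the $L^2$ bound on the remainder, of order $\lambda_1>1/2$.
\end{itemize}
You also need orthogonality of martingale increments, $\sum\|M_v-M_u\|_2^2\le \mE[M_t^2]$, together with Cauchy--Schwarz to kill the cross term. All of these are available, and your estimates go through.

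\textbf{What each approach buys.} Your route is more transparent. It exhibits the lemma as a rough product rule in which the bracket between $M$ and the deterministic driver vanishes, paralleling the proof of Lemma \ref{lemma:roughIto:quadratic}. The paper's route is shorter and needs no regularity information about $\cA$ beyond convergence of its Riemann sums.
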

\begin{proof}
    For fixed $0\leq s <t\leq T$, and a partition $\cP$ of $[0,t]$, let us define
    \begin{align}
\tilde M^\cP_t = M_t\sum_{[u,v]\in \cP}\big(\Ti_u(\phi)\delta B_{u,v}+\Ti'_u(\phi)\mB_{u,v}\big) - \sum_{[u,v]\in \cP}\big(M_u\Ti_u(\phi)\delta B_{u,v}+M_u\Ti'_u(\phi)\mB_{u,v}\big).
    \end{align}
Consider the partition of $[0,s]$ given by $\cP_s:=\{[u,v\wedge s]:[u,v]\in \cP\}$. We first note the facts that
\begin{align}
   & \mE^{\mP}\bigg[M_t \sum_{[u,v]\in \cP, u<s} \Ti_u(\phi)\delta B_{u,v}+ \Ti'_u(\phi)\mB_{u,v}\bigg|\F_s\bigg]\\
   =&M_s\bigg(\sum_{[u,v]\in \cP, u<s} \Ti_u(\phi)\delta B_{u,v}+ \Ti'_u(\phi)\mB_{u,v}\bigg),\label{ulesss}
\end{align}
and
\begin{align}
    &\mE^{\mP}\bigg[M_t \sum_{[u,v]\in \cP, u\geq s} \Ti_u(\phi)\delta B_{u,v}+ \Ti'_u(\phi)\mB_{u,v}\bigg|\F_s\bigg]\\=& \mE^{\mP}\bigg[\mE^{\mP}\bigg[M_t \sum_{[u,v]\in \cP, u\geq s} \Ti_u(\phi)\delta B_{u,v}+ \Ti'_u(\phi)\mB_{u,v}\bigg|\F_u\bigg]\bigg|\F_s\bigg]\\
    =&\mE^{\mP}\bigg[ \sum_{[u,v]\in \cP, u\geq s} M_u\Ti_u(\phi)\delta B_{u,v}+ M_u\Ti'_u(\phi)\mB_{u,v}\bigg|\F_s\bigg].\label{ugreaters}
\end{align}
Using \eqref{ulesss} and \eqref{ugreaters} leads to
\begin{align}
    \mE^{\mP}[\tilde M^\cP_t|\F_s] =& M_s\bigg(\sum_{[u,v]\in \cP, u<s} \Ti_u(\phi)\delta B_{u,v}+ \Ti'_u(\phi)\mB_{u,v}\bigg)- \sum_{[u,v]\in \cP, u<s} M_u\Ti_u(\phi)\delta B_{u,v}+ M_u\Ti'_u(\phi)\mB_{u,v}\\
    =&\tilde M_s^{\cP_s} + (M_s-M_{u_s})\big(\Ti_{u_s}(\phi)\delta B_{s,v_s}+ \Ti'_{u_s}(\phi)\mB_{s,v_s}\big)\\
    =:&\tilde M_s^{\cP_s} + R_s,\label{MA:martingale:P}
\end{align}
where $[u_s,v_s]\in \cP$ is chosen such that $u_s<s\leq v_s$. It follows from \eqref{multiplication:MA:integration} that
\begin{align}
    \|R_s\|_2\lesssim& \Big(2\sup_{t\in [0,T]}\|M_t\|_2\Big)(|\delta B_{s,v_s}|+|\mB_{s,v_s}|)\\
    \lesssim& (v_s-s)^\alpha\\
    \lesssim& |\cP|^\alpha.
\end{align}
By stochastic sewing lemma (Theorem 2.8 of \cite{RSDE}), it holds that
\begin{align}
    \mE\Big[\Big|\mE^{\mP}[\tilde M_t^\cP|\F_s]- \mE^{\mP}[\tilde M_t|\F_s]\Big|^2\Big]\leq \mE^{\mP}[|\tilde M^\cP_t-\tilde M_t|^2]\to 0,
\end{align}
as $|\cP|\to 0$. Sending $|\cP|\to 0$ on both sides of \eqref{MA:martingale:P}, we get
\begin{align}
    \mE[\tilde M_t|\F_s] = \tilde M_s.
\end{align}
Since $s<t$ was arbitrary, $\tilde M$ is a martingale.
\end{proof}

\begin{lemma}\label{lemma:Mphi:QV}
    For any $\mP\in R(\bB,\bmu)$ and any $\phi\in C_0^\infty(\mR^d)$, the following result holds $\mP$-a.s.:
    \begin{align}\label{Mphi:QV:eq}
    \langle M^X(\phi)\rangle _t= \int_0^t  |\nabla \phi(X_s)\sigma(s,X_s,\mu_s)|^2d s,\quad \forall t\in [0,T].
    \end{align}

\end{lemma}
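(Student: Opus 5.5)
The plan is to identify the quadratic variation through the standard martingale-problem device: compute $\phi(X_t)^2$ in two ways and compare the drift parts. Since $\phi^2\in C_0^\infty(\mR^d)$, Remark~\ref{martignale-condition-special-case} applied to $\phi^2$ gives
\begin{align}
\phi(X_t)^2=\phi(X_0)^2+I_t(\phi^2)+\cA_t(\phi^2)+M^X_t(\phi^2).
\end{align}
On the other hand, squaring $\phi(X_t)=\phi(X_0)+I_t(\phi)+\cA_t(\phi)+M^X_t(\phi)$ produces the cross terms $I\cdot\cA$, $\cA^2$, $M\cdot I$, $M\cdot \cA$ and $(M^X(\phi))^2$, plus terms involving $\phi(X_0)$ and $I^2$. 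Each of these is handled by the lemmas just established.
\begin{itemize}
\item $\cA(\phi)^2$ is handled by Lemma~\ref{lemma:roughIto:quadratic}, which contributes $\int_0^t|\Ti_s(\phi)|^2ds$ plus a rough integral.
\item $I\cA$ is handled by Lemma~\ref{lemma:IA-multiplication}.
\item $M^X(\phi)\cA(\phi)$ is handled by Lemma~\ref{lemma:MA}. It equals a martingale plus $\int (M\Ti,M\Ti')d\bB$, after checking \eqref{multiplication:MA:integration}–\eqref{multiplication:MA:controlled}. These follow from boundedness of $\phi$, the coefficients and $\Ti$, and from $M^X(\phi)=\phi(X)-\phi(X_0)-I-\cA$ being controlled with $M'=-\Ti(\phi)$.
\item $M^X(\phi)I(\phi)$ and $I^2$ are treated by classical It\^o product rules, since $I$ has finite variation.
\item The $\phi(X_0)$ terms give martingale contributions or are absorbed.
\end{itemize}

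Collecting terms, $(M^X_t(\phi))^2$ equals a martingale plus absolutely continuous terms plus a single rough integral. Subtracting this from the expansion of $\phi^2(X_t)$ shows that
\begin{align}
(M^X_t(\phi))^2-\int_0^t|\nabla\phi(X_s)\sigma(s,X_s,\mu_s)|^2ds
\end{align}
equals a martingale plus a rough integral $\int_0^t(Z,Z')d\bB$ plus a $ds$-integral. The algebra uses $\mL\phi^2=2\phi\mL\phi+|\nabla\phi\,\sigma|^2+|\nabla\phi\,\sigma^0|^2$. The $|\nabla\phi\sigma^0|^2=|\Ti(\phi)|^2$ term cancels exactly against the bracket term from Lemma~\ref{lemma:roughIto:quadratic}, and the $2\phi\mL\phi$ terms cancel against the product-rule drifts. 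For the rough integrals, $\Ti(\phi^2)=2\phi(X)\Ti(\phi)$, and the Gubinelli derivatives combine through the identity $\phi(X)=\phi(X_0)+I+\cA+M$. The expected result is that the integrand of the remaining rough integral is $2\,\phi(X_s)\Ti_s(\phi)$ minus the corresponding pieces $2(\phi(X_0)+I_s+\cA_s+M_s)\Ti_s(\phi)$, which vanishes identically, and likewise for the second-level derivatives.

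The main obstacle is matching the Gubinelli derivatives of these rough integrals exactly: $\Ti'(\phi^2)$ versus $2\Ti^{\otimes2}+2(\ldots)\Ti'$. A mismatch at the level of $\Ti'$ would leave a term of order $\mB_{s,t}$ that is not a martingale. I would verify this by a direct computation with \eqref{eq:Tip-def} applied to $\phi^2$, using $\nabla^2(\phi^2)=2\phi\nabla^2\phi+2\nabla\phi\otimes\nabla\phi$. I would then use the uniqueness part of the stochastic sewing lemma to conclude that the two rough integrals coincide.

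Once the rough parts cancel, $(M^X_t(\phi))^2-\int_0^t|\nabla\phi\,\sigma|^2ds$ is a continuous martingale plus a continuous finite-variation process. Hence the finite-variation part is a continuous martingale of finite variation and is therefore zero. Uniqueness of the Doob–Meyer decomposition then yields \eqref{Mphi:QV:eq}.
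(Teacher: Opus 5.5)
Your decomposition is the paper's. You compare the martingale problem for $\psi=\phi^2$ with the expansion of $\phi(X)^2=(M^X(\phi)+I(\phi)+\cA(\phi))^2$. You treat $\cA^2$, $I\cA$ and $M\cA$ with Lemmas \ref{lemma:roughIto:quadratic}, \ref{lemma:IA-multiplication} and \ref{lemma:MA}, and $MI$, $I^2$ classically. Your bookkeeping of the drifts and of both levels of the rough integrands ($\Ti(\psi)=2\phi(X)\Ti(\phi)$, $\Ti'(\psi)=2\phi(X)\Ti'(\phi)+2\Ti(\phi)^{\otimes 2}$) is correct.

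The one wrong step is your verification of hypothesis \eqref{multiplication:MA:controlled}: $M^X(\phi)$ is \emph{not} controlled with $M'=-\Ti(\phi)$. Both $\phi(X)$ (compose $\phi$ with $(X,\hat\sigma^0)$) and $\cA(\phi)$ have Gubinelli derivative $\Ti(\phi)$, and $I(\phi)$ has derivative $0$. So the derivative of $M^X(\phi)$ is $0$, as it must be for a martingale. With $M'=-\Ti(\phi)$ the remainder condition would need $\mE_s[R^M_{s,t}]=\Ti_s(\phi)\delta B_{s,t}$ to be $O(|t-s|^{\beta+\beta'})$ with $\beta+\beta'>\alpha$, which fails. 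The product rule would also then produce the pair $(M\Ti,\,M\Ti'-\Ti^{\otimes2})$. That is not the pair $(M\Ti,M\Ti')$ that Lemma \ref{lemma:MA} requires and that your own second-level cancellation uses.

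With the correct $M'=0$, the product rule does give $(M\Ti(\phi),M\Ti'(\phi))$, since the cross term $\Ti_s(\phi)\mE_s[\delta M_{s,t}]$ in the remainder vanishes. However, $M^X(\phi)$ need not be bounded, because $\cA(\phi)$ need not be. The $L^{m,\infty}$ estimates therefore have to go through H\"older's inequality as in Lemma \ref{lemma:roughIto:quadratic}, not ``by boundedness''.

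The paper avoids the product rule altogether. It writes $\Ti(\psi)=2M\Ti+2I\Ti+2\cA\Ti$, and $\Ti'(\psi)$ likewise, and uses that $(\Ti(\psi),\Ti'(\psi))$ is controlled because $\mP\in R(\bB,\bmu)$. It then gets $(M\Ti,M\Ti')\in\bD^{\beta,\beta'}_\bB L^{m,\infty}_\mP$ by subtracting the pieces already known to be controlled from Lemmas \ref{lemma:roughIto:quadratic}--\ref{lemma:IA-multiplication}.

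Your last step is also muddled. The $ds$-terms disappear because of the exact algebraic cancellation you already carried out, not because ``the finite-variation part is a martingale''. From $M^2-\int_0^\cdot|\nabla\phi\,\sigma|^2ds=N+V$ alone, the martingale argument only gives $\langle M^X(\phi)\rangle=\int_0^\cdot|\nabla\phi\,\sigma|^2ds+V$. It is the algebra that gives $V\equiv0$, after which Doob--Meyer uniqueness concludes, as in the paper.
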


\begin{proof}
    For any $\phi\in C_0^\infty(\mR^d)$, let us consider $\psi = \phi^2$. Direct computations lead to
    \begin{align}
        &\nabla \psi = 2\phi \nabla \phi,\\
        &\nabla^2\psi = 2\nabla\phi^{\otimes 2} + 2\phi\nabla^2\phi,
    \end{align}
    and it thus holds that
    \begin{align}
        &\mL\psi = 2\phi \mL\phi + |\nabla \phi \sigma|^2 + |\nabla \phi \sigma^0|^2,\\
        &\Ti(\psi) = 2\phi \Ti(\phi),\label{T-phi-psi}\\
        &\Ti'(\psi) = 2\phi \Ti'(\phi) + 2\Ti(\phi)^{\otimes2}. \label{Tp-phi-psi}
    \end{align}
    Recalling $\phi = M(\phi) + I(\phi) + \cA(\phi)$, we have
    \begin{align}
        M^X_t(\psi) =& \phi(X_t)^2 - 2\int_0^t\int_U\phi(X_s)\mL\phi(s,X_s,\mu_s,u)\Lambda_s(du)ds-\int_0^t|\nabla\phi(X_s)\hat\sigma^0_s|^2d s \\
        &-\int_0^t |\nabla \phi(X_s)\sigma(s,X_s,\mu_s)|^2ds\\
        &-\int_0^t \big(2\phi(X_s) \Ti_s(\phi), 2\phi(X_s)\Ti_s'(\phi)+2\Ti_s(\phi)^{\otimes 2}\big)d\bB,\\
        =&\phi(X_t)^2 -\int_0^t  |\nabla \phi(X_s)\sigma(s,X_s,\mu_s)|^2ds\\
        &-2\int_0^t\int_UM^X_s(\phi)\mL\phi(s,X_s,\mu_s,u)\Lambda_s(du)ds\\
        &-2\int_0^t\int_UI_s(\phi)\mL\phi(s,X_s,\mu_s,u)\Lambda_s(du)ds\\
        &-2 \int_0^t\int_U\cA_s(\phi)\mL\phi(s,X_s,\mu_s,u)\Lambda_s(du)ds\\
        &-\int_0^t \big(2\cA_s(\phi) \Ti_s(\phi), 2\cA_s(\phi)\Ti_s'(\phi)+2\Ti_s(\phi)^{\otimes 2}\big)d\bB-\int_0^t|\Ti_s(\phi)|^2d s\\
        &-\int_0^t \big(2I_s(\phi) \Ti_s(\phi), 2I_s(\phi)\Ti_s'(\phi)\big)d\bB-\int_0^t \big(2M^X_s(\phi) \Ti_s(\phi), 2M^X_s(\phi)\Ti_s'(\phi)\big)d\bB.
    \end{align}
    Moreover,
    \begin{align}
        M^X_t(\phi)^2 = \phi(X_t)^2 - 2M^X_t(\phi)\big(I_t(\phi)+\cA_t(\phi)\big) - \big(I_t(\phi) + \cA_t(\phi)\big)^2.
    \end{align}
    It therefore holds that
    \begin{align}
     & M^X_t(\phi)^2 -\int_0^t  |\nabla \phi(X_s)\sigma(s,X_s,\mu_s)|^2\Lambda_s(du)ds \\
     = & M^X_t(\psi) -\underbrace{2\bigg(M^X_t(\phi) I_t(\phi) - \int_0^t\int_UM^X_s(\phi)\mL\phi(s,X_s,\mu_s,u)\Lambda_s(du)ds\bigg)}_{(I)} \\
     & - \underbrace{2\bigg(M^X_t(\phi)\cA_t(\phi)-\int_0^t \big(2M^X_s(\phi) \Ti_s(\phi), 2M^X_s(\phi)\Ti_s'(\phi)\big)d\bB \bigg)}_{(II)} \\
     &- \underbrace{\bigg(I_t(\phi)^2 - 2\int_0^t\int_UI_s(\phi)\mL\phi(s,X_s,\mu_s,u)\Lambda_s(du)ds\bigg)}_{(III)}\\
     &- \underbrace{2\bigg(I_t(\phi)\cA_t(\phi) -  \int_0^t \cA_s(\phi)\mL\phi(s,X_s,\mu_s,u)\Lambda_s(du) d s - \int_0^t \big(I_s(\phi)\Ti_s(\phi),I_s(\phi)\Ti'_s(\phi)\big)d \bB \bigg)}_{(IV)} \\
     &- \underbrace{\bigg(\cA_t(\phi)^2 -\int_0^t \big(2\cA_s(\phi) \Ti_s(\phi), 2\cA_s(\phi)\Ti_s'(\phi)+2\Ti_s(\phi)^{\otimes 2}\big)d\bB- \int_0^t|\Ti_s(\phi)|^2d s\bigg)}_{(V)}. \label{eq:decomposition:M2}
    \end{align}
    Noting that $t\mapsto I_t(\phi)$ is absolutely continuous and applying It\^o's formula, we have that $(I)$ is a stochastic integral against $M(\phi)$ and  is thus a martingale. Next, if we verify that $M$ satisfies \eqref{multiplication:MA:integration}-\eqref{multiplication:MA:controlled}, then Lemma \ref{lemma:MA} implies that $(II)$ is also a martingale. Indeed, \eqref{multiplication:MA:integration} is a direct consequence by the boundedness of $\phi$ and its derivatives. To prove \eqref{multiplication:MA:controlled}, we first use the condition $\mP\in R(\bB,\bmu)$ to obtain $(\Ti(\psi),\Ti'(\psi))\in \bD^{\beta,\beta'}_{\bB}L^{m,\infty}_{\mP}$. It follows from \eqref{T-phi-psi}-\eqref{Tp-phi-psi} that
    \begin{align}
       & \Ti(\psi) = 2M(\phi)\Ti(\phi)+2I(\phi)\Ti(\phi)+2\cA(\phi)\Ti(\phi),\\
       & \Ti'(\psi) = \big(2\cA(\phi)\Ti'(\phi) +  2\Ti(\phi)^{\otimes 2}\big) + 2M(\phi)\Ti'(\phi)+2I(\phi)\Ti'(\phi).
    \end{align}
    By Lemmas \ref{lemma:roughIto:quadratic}-\ref{lemma:IA-multiplication}, $(\cA(\phi)\Ti(\phi), \cA(\phi)\Ti'(\phi) +  \Ti(\phi)^{\otimes 2})\in \bD^{\beta,\beta'}_{\bB}L^{m,\infty}_{\mP}$ and $(I(\phi)\Ti(\phi), I(\phi)\Ti'(\phi))\in \bD^{\beta,\beta'}_{\bB}L^{m,\infty}_{\mP}$. Therefore, by the definitions of relevant path norms, we get that $(M(\phi)\Ti(\phi), M(\phi)\Ti'(\phi))\in \bD^{\beta,\beta'}_{\bB}L^{m,\infty}_{\mP}$, i.e., \eqref{multiplication:MA:controlled}. Then, it holds that $(II)$ in \eqref{eq:decomposition:M2} is a martingale. By the fundamental theorem of calculus, and Lemmas \ref{lemma:roughIto:quadratic}-\ref{lemma:IA-multiplication}, (III)-(V) are all zeros. This completes the proof because by \eqref{eq:decomposition:M2},
    \begin{align}
        M^X_t(\phi)^2 -\int_0^t  |\nabla \phi(X_s)\sigma(s,X_s,\mu_s,u)|^2ds, \quad t\in [0,T],
    \end{align}
    is a martingale.
\end{proof}

\begin{remark}\label{rmk:cross-variation}
    In Lemmas \ref{roughIto:quadratic}--\ref{lemma:Mphi:QV} we study the martingale condition under the choice $\psi:=\phi^2$ and finally obtain the information of quadratic variation. However, with two possibly different $\phi_1,\phi_2\in C_0^\infty(\mR^d)$, as $M^X(\phi)$ is linear in $\phi$, the relation
    \begin{align}
        \langle M^X(\phi_1),M^X(\phi_2)\rangle=\frac{1}{4}\Big(\langle M^X(\phi_1)+M^X(\phi_2)\rangle-\langle M^X(\phi_1)-M^X(\phi_2)\rangle\Big)
    \end{align}
    directly gives the following generalization to \eqref{Mphi:QV:eq}:
    \begin{align}\label{MX:cross-variation}
        \langle M^X(\phi_1),M^X(\phi_2)\rangle_t=\int_0^t \nabla \phi_1(X_s)\sigma(s,X_s,\mu_s)\cdot \nabla \phi_2(X_s)\sigma(s,X_s,\mu_s) ds.
    \end{align}
    Moreover, choosing a vector valued $\vec \phi\in C_0^\infty(\mR^d;\mR^d)$ and considering $M^X(\vec\phi)$ in an element-wise manner (so that it is now a martingale valued in $\mR^d$), we derive from \eqref{MX:cross-variation} that
    \begin{align}
        \langle M^X(\vec\phi)\rangle_t=\int_0^t \nabla \vec\phi(X_s)\sigma\sigma^\mt(s,X_s,\mu_s)\big(\nabla \vec\phi(X_s)\big)^\mt  d s.
    \end{align}
\end{remark}

    Lemma \ref{lemma:Mphi:QV} characterizes the quadratic variation of $M(\phi)$ for any test function $\phi$. This will be the first step to establish the equivalence between martingale solutions and weak solutions (through the lens of Subsection 4.5, \cite{RSDE}) of RSDE in the relaxed control formulation. We next complete this characterization by also incorporating the cross variation between $X$ and $W$.

\begin{lemma}\label{lemma:cross-M-W}
    For any $\mP\in R(\bB,\bmu)$ and $ \phi^X\in C_0^\infty(\mR^d)$, $\phi^W\in C_0^\infty(\mR^n)$, the following holds $\mP$-a.s.:
    \begin{align}
        \langle M^X( \phi^X),M^W(\phi^W)\rangle_t=\int_0^t \nabla \phi^X(X_s)\sigma(s,X_s,\mu_s)\cdot \nabla \phi^W(W_s) ds,
    \end{align}
    where
    \begin{align}
        M^W(\psi)_t = \psi(W_t)-\frac{1}{2}\int_0^t\triangle \psi(W_s) d s =\int_0^t \nabla \psi(W_s) d W_s.
    \end{align}
\end{lemma}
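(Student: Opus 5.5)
We follow the polarization strategy used for Lemma \ref{lemma:Mphi:QV}, now applied to a test function of the joint variable $(x,w)$. Set $\psi(x,w):=\phi^X(x)\phi^W(w)\in C_0^\infty(\mR^{d+n})$. Condition 3 of Definition \ref{def:pathwise-relax-control} then says that $M(\psi)$ defined in \eqref{def:Mphiomega1-XW} is a $(\mP,\mF)$-martingale. By Remark \ref{martignale-condition-special-case}, both $M^X(\phi^X)$ and $M^W(\phi^W)$ are martingales as well, and $W$ is an $\mF$-Brownian motion.

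The first step is to expand $M_t(\psi)$ by the product rule. Since $\psi$ does not depend on $w$ through the rough directions, $\nabla_x\psi=\phi^W\nabla\phi^X$. This gives $\Ti(\psi)=\phi^W(W)\Ti(\phi^X)$ and $\Ti'(\psi)=\phi^W(W)\Ti'(\phi^X)$. The generator $\widehat\mL\psi$ contains three pieces: $\phi^W\mL\phi^X$ (the $\sigma^0$ part being the separate trace term in \eqref{def:Mphiomega1-XW}), $\phi^X\frac12\triangle\phi^W$, and the mixed term $\nabla\phi^X\sigma\cdot\nabla\phi^W$ coming from the off-diagonal block of $\hat a$.

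Next we write $\phi^X(X_t)=\phi^X(X_0)+I_t+\cA_t+M^X_t$, with $I,\cA$ as in Appendix \ref{app:rough-martingale}, and $\phi^W(W_t)=\phi^W(W_0)+J_t+M^W_t$, where $J_t=\frac12\int_0^t\triangle\phi^W(W_s)ds$. Expanding the product $\phi^X(X_t)\phi^W(W_t)$ and subtracting the terms of $M_t(\psi)$, we obtain
\[
M^X_tM^W_t-\int_0^t\nabla\phi^X(X_s)\sigma(s,X_s,\mu_s)\cdot\nabla\phi^W(W_s)ds \;=\; M_t(\psi)-(\text{a sum of remainder terms}).
\]
The remainder terms are of the following types, and we need each to be a martingale or to vanish:
\begin{itemize}
\item Products of $M^X$ or $M^W$ with the absolutely continuous processes $I$ or $J$, minus their Lebesgue integrals. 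These are martingales by classical It\^o calculus, as in term (I) of \eqref{eq:decomposition:M2}.
\item Products $IJ$ and the integrals $\int J\,dI$, $\int I\,dJ$. These cancel by the classical product rule.
\item The terms $J_t\cA_t-\int_0^t J_s\,d\cA_s-\int_0^t\cA_s\,dJ_s$. These vanish by the analogue of Lemma \ref{lemma:IA-multiplication}, with $I(\phi)$ replaced by the absolutely continuous adapted process $J$. We would note that the same sewing argument applies because $J$ is $C^1$ in time.
\item The terms $M^W_t\cA_t-\int_0^t\big(M^W_s\Ti_s(\phi^X),M^W_s\Ti'_s(\phi^X)\big)d\bB$. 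These are martingales by Lemma \ref{lemma:MA}. The rough integral of $\phi^W(W)\Ti(\phi^X)$ splits according to $\phi^W(W)=\phi^W(W_0)+J+M^W$, which is how this term arises.
\end{itemize}
Since the martingale property is linear and the remainder contains no quadratic rough term (the cross term carries no $\sigma^0$-contribution, because $\psi$ has no $w$-rough direction), we conclude that $M^XM^W$ minus the claimed integral is a martingale. This identifies the bracket.

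The main obstacle is the hypothesis \eqref{multiplication:MA:controlled} of Lemma \ref{lemma:MA} for $M=M^W(\phi^W)$. That is, we must show $(M^W\Ti(\phi^X),M^W\Ti'(\phi^X))\in\bD^{\beta,\beta'}_\bB L^{m,\infty}_\mP$. We would argue as in Lemma \ref{lemma:Mphi:QV}: condition 2 of Definition \ref{def:pathwise-relax-control}, together with Lemma \ref{lemma:T-phi-rough-path-norm} applied to $\psi$, gives $(\Ti(\psi),\Ti'(\psi))\in\bD^{\beta,\beta'}_\bB L^{m,\infty}_\mP$. We then subtract the pieces $\phi^W(W_0)\Ti(\phi^X)$ and $J\,\Ti(\phi^X)$, which are controlled since $J$ is Lipschitz in time.

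An alternative we would keep in reserve is a direct estimate. The process $M^W=\int\nabla\phi^W(W)dW$ has bounded integrand, so $\|\delta M^W_{s,t}|\F_s\|_m\lesssim(t-s)^{1/2}$ uniformly. Moreover $\mE_s[\delta M^W_{s,t}]=0$ kills the cross term in the remainder, and $1/2\ge\beta$. Together these give the controlled-path bounds for the product directly.
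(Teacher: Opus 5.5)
Your proposal is correct and follows essentially the paper's route. You apply condition 3 to $\psi=\phi^X\phi^W$, expand with $\phi^X(X)=\phi^X(X_0)+I+\cA+M^X$ and $\phi^W(W)=\phi^W(W_0)+J+M^W$, and dispose of the remainder terms using classical It\^o calculus, the analogue of Lemma \ref{lemma:IA-multiplication} with $J$ in place of $I$, and Lemma \ref{lemma:MA}. Your explicit check of \eqref{multiplication:MA:controlled} for $M^W(\phi^W)$, which the paper's sketch leaves implicit, is cleanest via your direct estimate (using that $M^W(\phi^W)$ is bounded, being $\phi^W(W)$ minus a bounded Lebesgue integral), because Lemma \ref{lemma:T-phi-rough-path-norm} is stated only for test functions of $x$ alone.
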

\begin{proof}
    The proof is similar to that of Lemma \ref{lemma:Mphi:QV}. For completeness, we only provide a sketch here. Denote
    \begin{align}
        I^W_t(\phi^W)=\frac{1}{2}\int_0^t \triangle \phi^W(W_s) ds.
    \end{align}
    Recall that $\phi^X(X_t)=M^X_t(\phi^X)+I_t(\phi^X)+\cA_t(\phi^X)$, $\phi^W(W_t)=M^W_t(\phi^W)+I^W_T(\phi^W)$. The martingale condition \eqref{def:Mphiomega1-XW} applied to $\psi(x,w)=\phi^X(x)\phi^W(w)$ gives,
    \begin{align}
        M_t(\psi)=&\phi^X(X_t)\phi^W(W_t)-\int_0^t\int_U \phi^W(W_s)\mL\phi^X(s,X_s,\mu_s,u)\Lambda_s(du)\\
        &-\int_0^t \nabla \phi^X(X_s)\sigma(s,X_s,\mu_s)\cdot \nabla \phi^W(W_s)ds -\frac{1}{2}\int_0^t \phi^X(X_s)\triangle\phi^W(W_s)ds\\
        &-\int_0^t \big(\phi^W(W_s)\Ti_s(\phi^X),\phi^W(W_s)\Ti_s'(\phi^X)\big)d\bB, \quad t\in [0,T],
    \end{align}
    is a $(\mP,\mF)$ martingale. Consequently,
    \begin{align}
        &M^X_t(\phi^X)M^W_t(\phi^W)-\int_0^t \nabla \phi^X(X_s)\sigma(s,X_s,\mu_s)\cdot \nabla \phi^W(W_s)ds\\
        =&M_t(\psi) +\int_0^t \int_U M^W_s(\phi^W) \mL\phi(s,X_s,\mu_s,u)\Lambda_s(du)ds - I_t(\phi^X)M_t^W(\phi^W) \\
        &+\int_0^t \int_U I^W_s(\phi^W) \mL\phi(s,X_s,\mu_s,u)\Lambda_s(du)ds+\frac{1}{2}\int_0^t I_s(\phi^X)\triangle\phi^W(W_s)ds - I_t(\phi^X)I^W_t(\phi^W) \\
        &+\frac{1}{2}\int_0^t M^X_s(\phi^X)\triangle\phi^W(W_s)ds - M^X_t(\phi^X)I^W_t(\phi^W) \\
        &+\frac{1}{2}\int_0^t \cA_s(\phi^X)\triangle\phi^W(W_s)ds+\int_0^t\big(I^W_s(\phi^W)\Ti_s(\phi^X),I^W_s(\phi^W)\Ti'_s(\phi^X)\big)d\bB - \cA_t(\phi^X)I^W_t(\phi^W)\\
        &+\int_0^t\big(M^W_s(\phi^W)\Ti_s(\phi^X),M^W_s(\phi^W)\Ti'_s(\phi^X)\big)d\bB - \cA_t(\phi^X)M^W_t(\phi^W). \label{cross-X-W}
    \end{align}
    Using Lemmas \ref{lemma:IA-multiplication} and \ref{lemma:MA} and applying the classical It\^o formula to the five terms in \eqref{cross-X-W}, we conclude that the right-hand side is a martingale.
\end{proof}

\bibliographystyle{informs2014} 
\bibliography{ref.bib}    

\end{document}